\newcommand{\wt}[1]{{\widetilde{#1}}}
\providecommand{\dim}{\mathop{\rm dim}\nolimits}
\providecommand{\lf}{\mathop{\rm lf}\nolimits}
\providecommand{\deg}{\mathop{\rm deg}\nolimits}
\providecommand{\vol}{\mathop{\rm vol}\nolimits}
\providecommand{\Ricci}{\mathop{\rm Ricci}\nolimits}
\providecommand{\im}{\mathop{\rm im}\nolimits}
\providecommand{\supp}{\mathop{\rm supp}\nolimits}
\providecommand{\Lip}{\mathop{\rm Lip}\nolimits}
\providecommand{\Alt}{\mathop{\rm Alt}\nolimits}
\newtheorem{thm}{Theorem}[section]
\newtheorem{prop}[thm]{Proposition}
\newtheorem{lemma}[thm]{Lemma}
\newtheorem{cor}[thm]{Corollary}
\theoremstyle{definition}
\newtheorem{remark}[thm]{Remark}
\newtheorem{defi}[thm]{Definition}
\title{Lipschitz simplicial volume of connected sums}
\author{Karol Strzałkowski}
\begin{document}
\maketitle

\begin{abstract}
We prove that the locally finite simplicial volume and the Lipschitz simplicial volume are additive with respect to certain gluings of manifolds. In particular, we prove that in dimension $\geq 3$ they are additive with respect to connected sums and gluings along $\pi_1$-injective, amenable aspherical boundary components.
\end{abstract}

\section{Introduction}
The simplicial volume is a homotopy invariant of manifolds defined for a closed manifold $M$ as
\[
\|M\| :=inf\{|c|_1\::\: \text{$c$ is a fundamental cycle with $\mathbb{R}$ coefficients}\},
\]
where $|\cdot|_1$ is the $\ell^1$-norm on $C_*(M,\mathbb{R})$ (which we will denote for simplicity as $C_*(M)$) with respect to the basis consisting of singular simplices. In other words, it is an $\ell^1$-norm of the fundamental class. It can be also defined for manifolds with boundary by taking the $\ell^1$-norm of the relative fundamental class. Although the definition is relatively simple, it has many applications. Most of them are mentioned in the work of Gromov \cite{G}. One of the most important is the use to degree theorems. It is clear from the definition that the simplicial volume is functorial in the sense that if $f:M\rightarrow N$ is a map between $n$-dimensional manifolds then
\[
\deg(f)\cdot\|N\|\leq \|M\|.
\]
One obtains immediately that if $\|N\|\neq0$ then
\[
\deg(f)\leq \frac{\|M\|}{\|N\|}.
\]
It follows that we are particularly interested in the examples of manifolds with non-zero simplicial volume, because only in this case we get some non-trivial bounds on the degrees of maps. However, the existence of such bounds reveals some kind of rigidity of a given manifold and it is much easier to give examples of manifolds without such a rigid behaviour, i.e. with zero simplicial volume. These are e.g. all manifolds which admit a self-map of a degree $>1$ such as spheres and tori. There is also a beautiful result of Gromov that $\|M\|=0$ if $M$ has amenable fundamental group or admits a non-trivial circle action \cite{G}.

The simplest group of examples of manifolds with non-zero simplicial volume are closed surfaces of genus $\geq 2$, and more generally negatively curved manifolds. The main ingredients of the proof are the existence of the straightening procedure for such manifolds (which is valid for all $CAT(0)$-spaces and allow to change every singular chain into a chain with not greater $\ell^1$-norm consisting of geodesic simplices) and the upper bound on the volume of geodesic simplices. Similar strategy leads to the discovery of other manifolds with non-zero simplicial volume, such as locally symmetric spaces of non-compact type \cite{LafS, Sav}.

Another way to obtain manifolds with non-zero simplicial volume is to construct them from such manifolds by certain operations. Two of the operations which may be used for such construction are products and connected sums
\begin{thm}[\cite{G}]\label{Theorem_product_inequality}
  Let $M$ and $N$ be two compact manifolds. Then the following inequality holds
 \[
 \|M\|\cdot\|N\| \leq \|M\times N\| \leq \binom{\dim M +\dim N}{\dim M} \|M\|\cdot\|N\|.
 \]
\end{thm}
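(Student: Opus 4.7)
The plan is to establish the two inequalities by separate methods: the upper bound by an explicit chain-level construction, the lower bound by duality with bounded cohomology.

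For the upper bound I would use the Eilenberg--Zilber cross product on singular chains. Given singular simplices $\sigma\colon\Delta^m\to M$ and $\tau\colon\Delta^n\to N$, the map $\sigma\times\tau\colon\Delta^m\times\Delta^n\to M\times N$ restricts to a singular $(m+n)$-simplex on each top-dimensional simplex of the standard triangulation of the prism $\Delta^m\times\Delta^n$; this triangulation has exactly $\binom{m+n}{m}$ top-dimensional simplices, one for each $(m,n)$-shuffle. Summing these with appropriate signs defines $\sigma\times\tau\in C_{m+n}(M\times N)$, and extending bilinearly yields a natural chain map $C_*(M)\otimes C_*(N)\to C_*(M\times N)$ which, by the Künneth theorem, sends the tensor product of (relative) fundamental cycles to a (relative) fundamental cycle of $M\times N$. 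For fundamental cycles $c$ of $M$ and $c'$ of $N$, the construction immediately gives $|c\times c'|_1\leq\binom{m+n}{m}\cdot|c|_1\cdot|c'|_1$, and the upper bound follows upon taking infima.

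For the lower bound I would invoke Gromov's duality between the $\ell^1$-seminorm on singular homology and the $\ell^\infty$-seminorm on bounded cohomology, which identifies $\|M\|$ with the norm of the bounded fundamental cohomology class $[M]^*\in H^n_b(M,\partial M;\mathbb{R})$. The key structural fact is that the cup product on bounded cochains is submultiplicative for the $\ell^\infty$-norm, so the induced cross product in bounded cohomology has operator norm at most $1$. Since the fundamental cohomology class of $M\times N$ is, via the appropriate Poincaré--Lefschetz duality, the cross product of those of $M$ and $N$, one concludes $\|[M\times N]^*\|_\infty \leq \|[M]^*\|_\infty\cdot\|[N]^*\|_\infty$, which dualises to the desired lower bound $\|M\|\cdot\|N\|\leq\|M\times N\|$.

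The main obstacle is the relative setting: the theorem is stated for compact manifolds, possibly with boundary, so one must verify that the chain-level cross product takes relative fundamental cycles of $(M,\partial M)$ and $(N,\partial N)$ to a relative fundamental cycle of $(M\times N,\,\partial M\times N\cup M\times\partial N)$, and that the bounded cohomology duality holds in this relative form. For the upper bound the construction is naturally compatible with boundaries, so the extension is essentially routine. For the lower bound one needs the appropriate version of the duality theorem for relative bounded cohomology together with its compatibility with cross products, and this is where most of the technical work lies.
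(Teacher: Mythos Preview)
The paper does not contain a proof of this theorem: it is stated in the introduction with a bare citation to Gromov~\cite{G} and is used only as background motivation. There is therefore nothing to compare your argument against inside the paper itself.

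That said, your sketch is the standard proof and is essentially correct. One small point of phrasing: you write that duality ``identifies $\|M\|$ with the norm of the bounded fundamental cohomology class''; the precise statement is that $\|M\|^{-1}$ equals the infimum of $\|\varphi\|_\infty$ over bounded cocycles $\varphi$ pairing to~$1$ with $[M]$ (with the convention $\|M\|=0$ when no such $\varphi$ exists). With that reading, the submultiplicativity of the bounded cross product gives exactly the inequality you want. Your identification of the relative case as the place where care is needed is also accurate; in the closed case both halves are routine, and for compact manifolds with boundary one needs the relative version of the duality principle, which is available in the literature.
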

\begin{thm}[\cite{G}]\label{Theorem_commected_sums_classic}
  Let $M$ and $N$ be two compact manifolds of dimension $n\geq 3$. Then
  \[
  \|M\#N\| = \|M\| +\|N\|.
  \]
\end{thm}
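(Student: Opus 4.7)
The plan is to prove the two inequalities $\|M\#N\|\le\|M\|+\|N\|$ and $\|M\#N\|\ge\|M\|+\|N\|$ separately, exploiting the decomposition $M\#N = W_M\cup_{S^{n-1}} W_N$ where $W_M = M\setminus\mathring{D}^n$ and $W_N = N\setminus\mathring{D}^n$. The crucial geometric input distinguishing $n\ge 3$ from $n=2$ is that the separating sphere $S^{n-1}$ is simply connected: in particular $\|S^{n-1}\|=0$ and, more strongly, $H^*_b(S^{n-1})=0$ in positive degrees. (The theorem fails in dimension $2$ precisely because the latter vanishing does not hold for $S^1$.)

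As a preliminary step, I would establish the identification $\|M\|=\|W_M,\partial W_M\|$ (and analogously for $N$). One direction is immediate: a fundamental cycle of $M$, viewed in $C_*(M)$ with boundary thrown into $C_*(D^n)$, represents the relative fundamental class of $(M,D^n)$, which by excision corresponds to that of $(W_M,\partial W_M)$. For the reverse, given a relative fundamental cycle $z$ of $(W_M,\partial W_M)$ with $|z|_1$ close to the relative norm, $\partial z$ is a fundamental cycle of $S^{n-1}$; using $\|S^{n-1}\|=0$ together with a uniform boundary condition for $D^n$, one can adjust $z$ within $D^n$ by a chain of arbitrarily small $\ell^1$-norm so that it becomes a fundamental cycle of $M$.

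For the upper bound I would take relative fundamental cycles $z_M,z_N$ with norms within $\varepsilon/3$ of $\|M\|$ and $\|N\|$ respectively, and use the same filling estimate on $S^{n-1}$ to modify them by chains of total norm less than $\varepsilon/3$ so that $\partial z_M+\partial z_N=0$ as singular chains. The sum $z_M+z_N$ is then a fundamental cycle of $M\#N$ of norm at most $\|M\|+\|N\|+\varepsilon$, and letting $\varepsilon\to 0$ gives the upper bound.

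For the lower bound I would take a fundamental cycle $c$ of $M\#N$ with $|c|_1<\|M\#N\|+\varepsilon$ and split it as $c=c_M+c_N$ with each piece supported in the corresponding $W$-half, so that $c_M,c_N$ become relative fundamental cycles and the preliminary step yields $|c|_1 = |c_M|_1+|c_N|_1 \ge \|M\|+\|N\|-2\varepsilon$. The central technical obstacle is performing this splitting: simplices of $c$ straddling the sphere $S^{n-1}$ must be decomposed, and naive barycentric subdivision inflates the $\ell^1$-norm. I would handle this either directly, by subdividing only the offending simplices and controlling the error via a uniform boundary condition on $S^{n-1}$, or, more cleanly, by passing to the dual picture $\|M\#N\|=\sup\{\langle[M\#N],\phi\rangle : \phi\in C^n_b(M\#N) \text{ cocycle},\ \|\phi\|_\infty\le 1\}$ and assembling a near-optimal bounded cocycle on $M\#N$ from near-optimal bounded cocycles on $M$ and $N$, via a Mayer--Vietoris-style argument in bounded cohomology whose connecting term vanishes because $\pi_1(S^{n-1})$ is (trivially) amenable. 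This is precisely where the hypothesis $n\ge 3$ enters.
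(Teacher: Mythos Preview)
Your proposal is sound, and the bounded-cohomology route you sketch for the lower bound is precisely the classical argument: this theorem is not proved in the paper but merely quoted from \cite{G} and noted to be a special case of additivity along amenable gluings \cite{G,BBFIPP,Kue1}, all of which proceed via the duality between $\|\cdot\|_1$ and bounded cohomology together with a Mayer--Vietoris-type argument exploiting $H_b^*(S^{n-1})=0$.

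What is worth pointing out is that the paper's own contribution (Theorem~\ref{Theorem_main} and its proof in Sections~\ref{section_subadditivity}--\ref{section_higher_dim_trick}) recovers this result as a special case by an entirely different, \emph{geometric} method that deliberately avoids bounded cohomology. For subadditivity the paper uses Gromov's multicomplex machinery and diffusion of chains (Proposition~\ref{prop_amenable_subset}) rather than a filling/UBC argument. For superadditivity the paper does \emph{not} dualise: instead it lifts to the universal cover, builds retractions onto the pieces $\widetilde{X}_i$ using a retraction $\widetilde{X}\to\widehat{Z}$, and averages over the amenable group of such retractions to push simplices straddling $Z=S^{n-1}$ to one side with controlled norm (Lemma~\ref{lemma_boundary_trick}). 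Since $S^{n-1}$ is not aspherical for $n\ge 3$, a further ``higher-dimensional cell trick'' (Section~\ref{section_higher_dim_trick}, using $\pi_k(S^{n-1})=0$ for $2\le k\le n-2$) is needed to manufacture the retraction. Your first option for the lower bound---local subdivision plus UBC on $S^{n-1}$---is in spirit closer to this, but as you seem to sense, making it precise is genuinely delicate (subdivision inflates norms, and UBC on $S^{n-1}$ alone does not obviously repair this); the paper's retraction-and-diffusion mechanism is one concrete way to carry it out. The payoff of the paper's approach is that it extends to locally finite and Lipschitz simplicial volume of non-compact manifolds, where the duality with bounded cohomology is much harder to use.
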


In fact, the second of the above theorems is a special case of additivity with respect to amenable gluings \cite{G, BBFIPP, Kue1}.

\begin{thm}
Let $M_1$, $M_2$ be two compact manifolds with boundary. Let $N_i\subset M_i$ for $i=1,2$ be boundary components such that there exists a homeomorphism $f:N_1\rightarrow N_2$. Assume moreover that $im(\pi_1(N_1)\rightarrow \pi_1(M_1))$ and $im(\pi_1(N_1)\rightarrow \pi_1(M_1))$ are amenable and
\[
f_*(ker(\pi_1(N_1)\rightarrow \pi_1(M_1))) = ker(\pi_1(N_2)\rightarrow \pi_1(M_2)).
\]
Then
\[
\|M_1\cup_f M_1, \partial(M_1\cup_f M_2)\| = \|M_1, \partial M_1\| + \|M_2, \partial M_2\|.
\]
\end{thm}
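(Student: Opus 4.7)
My plan is to prove both inequalities separately, in each direction invoking Gromov's equivalence theorem for relative simplicial volume. That theorem, available under the amenability hypothesis on the boundary image, identifies $\|M_i,\partial M_i\|$ with the infimum of $|c|_1$ over fundamental cycles $c \in C_n(M_i)$ whose boundary norm $|\partial c|_1$ can simultaneously be made arbitrarily small. The analogous statement holds for $M_1 \cup_f M_2$, since its remaining boundary components $(\partial M_1 \setminus N_1) \cup (\partial M_2 \setminus N_2)$ retain amenable $\pi_1$-images.

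For the upper bound, I would fix $\epsilon > 0$ and, via the equivalence theorem applied to each piece, choose fundamental cycles $c_i \in C_n(M_i)$ with $|c_i|_1 \leq \|M_i,\partial M_i\| + \epsilon$ and $|\partial c_i|_1 < \epsilon$. The sum $c := c_1 + f_\# c_2 \in C_n(M_1 \cup_f M_2)$ then represents the relative fundamental class of the glued pair, up to a chain supported on $N$ of $\ell^1$-norm less than $2\epsilon$. Since the image of $\pi_1(N)$ in $\pi_1(M_1 \cup_f M_2)$ remains amenable---this follows from the hypotheses together with a van Kampen argument---a second application of the equivalence theorem, now on the glued manifold, absorbs this error into an arbitrarily small correction and yields $\|M_1 \cup_f M_2, \partial\| \leq \|M_1,\partial M_1\| + \|M_2,\partial M_2\| + 3\epsilon$. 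Letting $\epsilon \to 0$ closes the upper bound.

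For the reverse inequality, I would take a fundamental cycle $z$ of $(M_1 \cup_f M_2, \partial)$ with $|z|_1 \leq \|M_1 \cup_f M_2, \partial\| + \epsilon$ and $|\partial z|_1 < \epsilon$, and split it as $z = z_1 + z_2$ with $z_i$ a chain supported in $M_i$. Since simplices in $z$ may cross the gluing hypersurface $N$, a preliminary step consisting of subdivision, followed by averaging along the amenable image of $\pi_1(N)$, is needed to concentrate each simplex on one side while keeping the $\ell^1$-norm under control. Each $z_i$ is then a relative fundamental cycle for $(M_i, \partial M_i)$ up to a chain on $N$ whose norm tends to $0$ with $\epsilon$, so that $|z_1|_1 + |z_2|_1 \leq |z|_1 + o(1)$ yields the lower bound.

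The main obstacle is the splitting step above: one needs simultaneously to resolve simplices near $N$ and to obtain a clean decomposition whose pieces are genuine relative fundamental cycles of $(M_i, \partial M_i)$. The kernel compatibility condition $f_*(\ker(\pi_1(N_1) \to \pi_1(M_1))) = \ker(\pi_1(N_2) \to \pi_1(M_2))$ is essential here, as it guarantees that bounded cohomology classes on $N$ extend consistently to both sides of the gluing; consequently the vanishing of bounded cohomology for amenable quotients furnishes a single filling on $N$ of uniformly small norm, valid with respect to both $M_1$ and $M_2$ simultaneously, which is exactly what is required both to absorb the error in the upper bound and to control the residual chains on $N$ in the lower bound.
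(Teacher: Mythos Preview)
This theorem is not proved in the paper; it is stated in the introduction as a known result, with citations to Gromov, Bucher--Burger--Frigerio--Iozzi--Pagliantini--Pozzetti, and Kuessner. The paper's own contribution is the subsequent Theorem~1.5, which concerns the locally finite and Lipschitz simplicial volumes of non-compact manifolds and carries extra hypotheses (asphericity of the gluing locus, or vanishing of $\pi_k$ for $2\le k\le n-2$) not present in the statement you address. So there is no ``paper's own proof'' to compare against here.

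Your strategy---via Gromov's equivalence theorem and, ultimately, bounded cohomology---is essentially the classical route of the cited references, and is deliberately \emph{not} the route this paper takes: the paragraph just before Theorem~1.5 explains that the paper's goal is precisely to give geometric, chain-level arguments that avoid the duality with bounded cohomology. The paper's machinery (multicomplexes and diffusion in Section~3, retractions on the universal cover in Sections~4--5) would, in the compact case, recover only the special instance of your theorem where $N$ is aspherical or highly connected.

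On the merits of your sketch: the upper bound is broadly right once one makes precise how the small boundary pieces on $N$ are filled (e.g.\ via a uniform boundary condition or a cone, with norm controlled by $\varepsilon$). The lower bound, however, has a genuine gap. You propose to subdivide $z$ and then ``average along the amenable image of $\pi_1(N)$'' to obtain a splitting $z=z_1+z_2$ with $|z_1|_1+|z_2|_1\le |z|_1+o(1)$. Subdivision inflates the $\ell^1$-norm by a factorial, and there is no chain-level averaging procedure that simply undoes this while keeping each simplex on one side of $N$. The classical proofs do not split chains at all: they work dually, extending bounded cocycles from the pieces across $N$ (this is where the kernel-compatibility condition enters) and then invoke the duality principle. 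Your final paragraph alludes to this, but it contradicts the chain-level description that precedes it. If you want a genuine chain-level splitting, that is exactly what Sections~4--5 of the paper construct---and it costs the asphericity hypothesis.
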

In particular, simplicial volume is additive with respect to gluings along $\pi_1$-injective boundary components with amenable fundamental groups.

For non-compact manifolds there are several ways of generalising the simplicial volume. The simplest and most intuitive approach is to define it as the $\ell^1$ norm of the locally finite (relative) fundamental class. The resulting simplicial volume, which we will call the \emph{locally finite simplicial volume} and denote also by $\|\cdot\|$, is invariant under proper homotopy equivalences. However, this volume vanishes in many cases \cite[Section 4.2, Example (a)]{G} and some theorems that hold for the simplicial volume in the compact case do not hold for the locally finite simplicial volume. The examples are product inequality (Theorem \ref{Theorem_product_inequality}) or proportionality principle \cite[Section 0.4]{G}. Another possible approach is to include a metric in the definition of the simplicial volume. This philosophy is realised e.g. by the \emph{Lipschitz simplicial volume}
\[
\|M\|_{\Lip}:= inf\{|c|_1\::\: c\in C_n^{\lf,\Lip}(M) \text{ is a fundamental cycle}\}.
\]
Here, $H^{\lf,\Lip}_*(M)$ is locally finite Lipschitz homology, i.e. homology of the locally compact Lipschitz chain complex
\[
C_*^{\lf,\Lip} = \{c = \sum_i a_i\sigma_i\in C_*^{\lf}(M)\::\: \Lip(c) = \sup_i \Lip(\sigma_i)<\infty  \}.
\]
The Lipschitz simplicial volume equals the classical simplicial volume for compact manifolds and is more rigid invariant than locally finite simplicial volume for non-compact manifolds-it is invariant under proper Lipschitz homotopy equivalences. On the other hand, the Lipschitz simplicial volume allows us to generalise most of the theorems concerning the simplicial volume to the non-compact case, including product inequality and proportionality principle \cite{LS, KS, Fra}.

One can ask about the behaviour of both of the above versions of the simplicial volume under taking certain gluings. In fact, the topic has not been much studied. The only reference known to the author is the classic work of Gromov \cite{G}, where the following version of the additivity for the locally finite simplicial volume is stated.

\begin{thm}[\cite{G}, Section 4.2]
Let $\dim M>1$ and let $V = \cup_{j=0}^{\infty}\ V_j \subset M$ be a disjoint union of compact submanifolds. If $im(\pi_1(V_j)\rightarrow\pi_1(M))$ are amenable for all $j\in\mathbb{N}$ and the sequence $(V_j)_{j\in\mathbb{N}}$ is amenable at infinity, then
\[
\|M\setminus V\| \geq \|M\|.
\]
Moreover, if $V$ consists of closed, codimension-$1$ submanifolds such that $\pi_1(V_j)$ injects in $\pi_1(M)$ for $j\in\mathbb{N}$ then
\[
\|M\setminus V\| = \|M\|.
\]
\end{thm}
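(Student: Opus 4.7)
The plan is to establish the two inequalities separately. The key tool throughout is the duality between the $\ell^1$-norm on locally finite chains and bounded cochains, together with the Matsumoto--Morita uniform boundary condition supplied by amenable subgroups.

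For the inequality $\|M\setminus V\|\geq\|M\|$, I would start with a locally finite fundamental cycle $c\in C_n^{\lf}(M\setminus V)$ satisfying $|c|_1\leq\|M\setminus V\|+\varepsilon$. Viewing $c$ as an element of $C_n^{\lf}(M)$ supported away from $V$, its boundary $\partial c$ is a locally finite cycle supported in a tubular neighbourhood $\mathcal{N}(V)=\bigsqcup_j\mathcal{N}(V_j)$. To promote $c$ to a fundamental cycle of $M$, it suffices to produce $b\in C_n^{\lf}(M)$ supported in $\mathcal{N}(V)$ with $\partial b=\partial c$ and $|b|_1$ arbitrarily small. For each $j$, the amenability of $\im(\pi_1(V_j)\to\pi_1(M))$ yields, via vanishing of bounded cohomology relative to amenable subgroups, a boundary constant $K_j$ so that any null-homologous cycle in $\mathcal{N}(V_j)$ has a filling of norm at most $K_j$ times its own norm. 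The assumption that $(V_j)$ is \emph{amenable at infinity} is precisely what allows the $K_j$ to be taken uniformly bounded (equivalently, what gives a uniform choice of Følner data across the family), so that the local fillings $b_j$ can be assembled into a genuinely locally finite $b$ with $|b|_1$ as small as desired. Letting $\varepsilon\to 0$ yields the claim.

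For the reverse inequality under the codimension-1 hypothesis, I would cut $M$ along the $V_j$ to obtain a disjoint union of manifold pieces $\{M_k\}$ with compact boundary components. Since each $\pi_1(V_j)\hookrightarrow\pi_1(M)$ is injective and has amenable image, a locally finite version of the amenable gluing theorem stated just before the statement gives
\[
\|M\|=\sum_k\|M_k,\partial M_k\|,
\]
where the right-hand side is summed with respect to the relative locally finite simplicial volumes. Since $\pi_1(\partial M_k)$ remains amenable inside $\pi_1(M_k)$, the standard argument identifying the interior with the compact pair (again a uniform-filling statement) gives $\|\text{int}(M_k)\|=\|M_k,\partial M_k\|$. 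Finally $M\setminus V=\bigsqcup_k \text{int}(M_k)$, and additivity of locally finite simplicial volume under disjoint union closes the argument.

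The main obstacle is global coherence. In the first step one must combine the individual fillings $b_j$, each produced by a separate appeal to amenability inside $V_j$, into a \emph{single} chain that is locally finite in $M$ and whose total $\ell^1$-norm lies below any prescribed threshold. This is the entire quantitative content of the ``amenable at infinity'' hypothesis: it forces the filling constants $K_j$ and the sizes of the Følner-type averaging operators to be controlled uniformly in $j$. A diagonal choice $\varepsilon_j\to 0$ together with a careful estimate of how the mass of $\partial c$ distributes over the components $V_j$ should make the construction rigorous, but it is here that the bookkeeping is genuinely delicate; the same philosophy also underpins the locally finite version of the gluing theorem invoked in the second step.
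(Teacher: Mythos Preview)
The paper does not prove this theorem; it is quoted from Gromov, and the author explicitly writes that ``to the author's knowledge the proof of the second part remains unknown.'' So there is no paper proof to compare against, and your task is harder than you may think.

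Your first step contains a basic error. A locally finite fundamental cycle $c\in C_n^{\lf}(M\setminus V)$ of the \emph{open} manifold $M\setminus V$ is an honest cycle: $\partial c=0$. There is no boundary supported near $V$ to fill. What actually goes wrong is that $c$ need not define an element of $C_n^{\lf}(M)$ at all, because infinitely many simplices of $c$ may accumulate on each $V_j$. The passage from a fundamental cycle of $M\setminus V$ to one of $M$ is therefore not a filling problem but a problem of controlling the behaviour of $c$ near the ends created by removing $V$; this is where the ``amenable at infinity'' hypothesis enters, and it is not captured by a Matsumoto--Morita type uniform boundary constant on the $V_j$ individually. (If instead you meant to work with relative cycles on $M\setminus \mathrm{int}\,N(V)$, note that $\|M\setminus V\|$ is \emph{not} equal to $\|M\setminus \mathrm{int}\,N(V),\partial N(V)\|$ in general: the retraction $M\setminus V\to M\setminus\mathrm{int}\,N(V)$ is not proper.)

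Your second step is circular in the context of this paper. You invoke ``a locally finite version of the amenable gluing theorem stated just before the statement,'' but no such version is available; the whole point of the paper is that additivity for non-compact manifolds is largely open, and Theorem~\ref{Theorem_main} only treats gluing along a \emph{single compact} boundary component. Cutting along infinitely many $V_j$ and summing requires exactly the infinite additivity that is not known. The further claim $\|\mathrm{int}(M_k)\|=\|M_k,\partial M_k\|$ is also not a standard fact for locally finite simplicial volume and fails without additional hypotheses.
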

Here, a sequence $(V_j)_{j\in\mathbb{N}}$ is \emph{amenable at infinity} if there is a descending sequence $(U_j)_{j\in\mathbb{N}}$ of subsets of $M$ such that $\cap_{j=1}^{\infty}U_j=\emptyset$ and for every $k\in\mathbb{N}$ one has $\cup_{j=k}^{\infty}V_j\subset U_k$ and $\im(\pi_1(V_k)\rightarrow \pi_1(U_k))$ are amenable. However, Gromov did not prove the above theorem, but only indicated how should it be proven. Using Gromov's hints, it is clear how to prove the first part, but to the author's knowledge the proof of the second part remains unknown. Even less is known for the Lipschitz simplicial volume, for which there are no results on the additivity.

One of the reasons why so little is known about the additivity of simplicial volumes for non-compact manifolds in contrast with the compact case is that all known proofs of such additivity phenomena use at some point duality principle between the simplicial volume and bounded cohomology. Although this principle in the compact case is easily stated and succesfuly applied to many problems, it is usually much more complicated for simplicial volumes for non-compact manifolds. In the case of the locally finite simplicial volume author believes that one can use the existing results to prove the additivity with respect to connected sums. However, this kind of proof would most probably not apply to the Lipschitz simplicial volume, where the duality principle is even more complicated.

In this paper we present the first geometric proofs of certain additivity theorems concerning the locally finite and Lipschitz simplicial volumes. Namely, we prove the following.

\begin{thm}\label{Theorem_main}
Let $M_1$ and $M_2$ be two $n$-dimensional Riemannian manifolds with boundary and let $N_i\subset \partial M_i$ for $i=1,2$ be some compact boundary components such that there exists homeomorphism $f:N_1\rightarrow N_2$. Then if the group $\im(\pi_1(N_1)\rightarrow \pi_1(M_1\cup_f M_2))$ is amenable then
\[
\|M\cup_f N, \partial (M\cup_f N)\| \leq \|M_1,\partial M_1\| + \|M_2,\partial M_2\|.
\]
Moreover, if the groups $\pi_1(N_1)$ and $\pi_1(N_2)$ are amenable and inject into $\pi_1(M_1)$ and $\pi_1(M_2)$ respectively and one of the following conditions is satisfied
\begin{itemize}
\item $N_1$ is aspherical;
\item $\pi_k(N_1)=0$ for $k=2,...,n-2$ and for every connected component $N'\subset \partial(M_1\cup_f M_2)$ the group $\im(\pi_1(N')\rightarrow \pi_1(M_1\cup_f M_2))$ is amenable;
\end{itemize}
then
\[
\|M\cup_f N, \partial (M\cup_f N)\| = \|M_1,\partial M_1\| + \|M_2,\partial M_2\|.
\]
If $f$ is bi-Lipschitz, the same holds also for $\|\cdot\|_{\Lip}$.
\end{thm}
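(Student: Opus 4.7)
The plan is to prove the upper bound and the matching lower bound separately, and to verify that when $f$ is bi-Lipschitz the same arguments carry over to Lipschitz chains. For the upper bound, I would stack relative fundamental cycles of $(M_1,\partial M_1)$ and $(M_2,\partial M_2)$ and use the amenability of $\im(\pi_1(N_1)\to \pi_1(M_1\cup_f M_2))$ to cancel the boundary mismatch on $N = N_1 \cong N_2$ at arbitrarily small $\ell^1$-cost. For the matching lower bound, I would cut a relative fundamental cycle of the gluing along $N$ into pieces supported in $M_1$ and $M_2$; $\pi_1$-injectivity together with asphericity (or the weaker $(n-2)$-connectivity together with the extra boundary-amenability assumption) is precisely what allows this cut to be performed at the chain level with controlled norm.

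For the upper bound, fix $\varepsilon>0$ and choose $c_i\in C_n^{\lf}(M_i)$ with $|c_i|_1<\|M_i,\partial M_i\|+\varepsilon$. Inside the glued manifold, $c_1+c_2$ is a relative fundamental cycle modulo the mismatch $z=\partial c_1|_{N_1}-f^*\partial c_2|_{N_2}$. Since both terms of $z$ represent the fundamental class of $N$, the cycle $z$ already bounds inside $N$, hence inside $M_1\cup_f M_2$. The amenability hypothesis on the image group gives, via an invariant-mean averaging argument (or equivalently via the small-fundamental-cycle property of amenable covers à la Gromov), the stronger statement that one can arrange---after replacing each $c_i$ by a homologous relative cycle of norm at most $|c_i|_1+\varepsilon$---that $\partial c_i|_{N_i}$ is itself a fundamental cycle of $N_i$ of norm $<\varepsilon$. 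Then the mismatch $z$ has norm $O(\varepsilon)$ and can be killed by a short cobordism in a collar of $N$, of norm bounded by the uniform-boundary constant of the collar times $O(\varepsilon)$. The result is a relative fundamental cycle of $M_1\cup_f M_2$ of norm at most $\|M_1,\partial M_1\|+\|M_2,\partial M_2\|+O(\varepsilon)$.

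For the reverse inequality, take $c\in C_n^{\lf}(M_1\cup_f M_2)$ a relative fundamental cycle with $|c|_1$ close to $\|M_1\cup_f M_2,\partial(M_1\cup_f M_2)\|$; the goal is to exhibit $c=c_1+c_2+r$ with each $c_i\in C_n^{\lf}(M_i)$ a relative fundamental cycle and $r$ of arbitrarily small norm. The $\pi_1$-injectivity hypothesis makes the preimage of $N$ in $\widetilde{M_1\cup_f M_2}$ a disjoint union of copies of $\widetilde{N_1}$, each separating its two sides. Asphericity makes each lift contractible, permitting an equivariant chain-level straightening that cuts each simplex of $c$ crossing a lift of $N$ into two pieces on either side; the overlap term is a chain on $N$ whose norm is controlled by amenability of $\pi_1(N_i)$. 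Under the weaker $\pi_k(N_1)=0$ for $k\le n-2$ alternative, the lifts are only $(n-2)$-connected, leaving remainders on the top-codimension part of $\partial(M_1\cup_f M_2)$ that must be absorbed using the extra amenability hypothesis on boundary components. After the split, the boundaries on $N$ of the two pieces are fundamental cycles of $N$ and can be made arbitrarily small using amenability of $\pi_1(N_i)$, giving the desired inequality. When $f$ is bi-Lipschitz, the correction chains can be built in a fixed Lipschitz collar and the straightening is equivariant on the universal cover with uniformly bounded Lipschitz constants, so the same proof works for $\|\cdot\|_{\Lip}$. The main obstacle I foresee is the chain-level cutting step: smooth transversality does not directly apply to singular chains, and controlling the $\ell^1$-norm of an equivariant straightening is the delicate technical core of the argument.
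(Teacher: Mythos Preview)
Your overall architecture is right, but two steps as written would not go through, and the Lipschitz extension needs a different mechanism.

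For the upper bound, you propose to modify each $c_i$ \emph{inside} $(M_i,\partial M_i)$ so that $\partial c_i|_{N_i}$ has norm $<\varepsilon$. That step uses amenability of $\im(\pi_1(N_i)\to\pi_1(M_i))$, which is not what the hypothesis gives: only $\im(\pi_1(N_1)\to\pi_1(M_1\cup_f M_2))$ is assumed amenable, and since $\pi_1(M_i)\to\pi_1(M_1\cup_f M_2)$ need not be injective (no $\pi_1$-injectivity is assumed for the subadditivity part), the image in $\pi_1(M_i)$ can be non-amenable. The paper avoids this by working in the glued space from the start: form $c_1+c_2-d$ with $d$ any chain in $N$ filling the mismatch, and then run a diffusion argument (averaging over an amenable group $\Gamma_k(X,Z)$ acting on a locally minimal $\Delta$-set) to show that the contribution of simplices with an edge in $N$ can be made $<\varepsilon$. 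Since every simplex of $d$ has all edges in $N$, the norm of $d$ disappears.

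For the lower bound, ``cutting each simplex crossing a lift of $N$ into two pieces'' is too naive: a single simplex can cross many lifts, so cutting multiplies the $\ell^1$-norm by the number of crossings. The paper's mechanism is different. One fixes a retraction $\widetilde X\to\hat Z$ onto one lift of $N$ (this is where contractibility of $\widetilde N$ enters), builds from it retractions onto each copy $\hat X_i$, and pushes a lifted simplex through all of them. This produces infinitely many copies, but the key geometric fact---proved via the Bass--Serre tree of the amalgam---is that \emph{at most one} retracted copy has no edge in $\pi^{-1}(N)$. Hence the sum has $|\cdot|_1^{ne(N)}$-norm at most $|c|_1$, and the edges-in-$N$ part is diffused away using amenability of $\pi_1(N)$. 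For the $(n-2)$-connected alternative, the paper does not work with the non-contractible lifts directly; it attaches cells of dimension $\geq n$ to $N$ to make it aspherical, applies the aspherical case verbatim, and then removes those extra cells at $\varepsilon$-cost by a separate ``higher-dimensional cell trick''---this removal step is exactly where the extra amenability hypothesis on the other boundary components is spent.

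Finally, your Lipschitz plan (build corrections in a Lipschitz collar, control Lipschitz constants of the straightening) is not how the paper proceeds and would be hard to carry out, because the diffusion averages over group actions that do not respect Lipschitz constants. The paper instead observes that every construction above is a \emph{finite modification} of the input cycle (only the finitely many simplices meeting the compact $N$ change), and proves once and for all that on a Riemannian manifold any finite modification of a Lipschitz cycle can be approximated by a Lipschitz cycle of no larger norm. This decouples the Lipschitz issue from the combinatorics entirely.
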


\begin{cor}
If $M_1$ and $M_2$ have no boundary and $n\geq 3$ then
\[
\|M_1\# M_2\| = \|M_1\| + \|M_2\|
\]
and
\[
\|M_1\# M_2\|_{\Lip} = \|M_1\|_{\Lip} + \|M_2\|_{\Lip}.
\]
\end{cor}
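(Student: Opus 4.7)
The plan is to realize the connected sum as a boundary gluing along spheres and to invoke Theorem \ref{Theorem_main} three times. First I would choose smooth embeddings $\varphi_i : D^n \hookrightarrow M_i$ (bi-Lipschitz with respect to some fixed Riemannian metric, in the Lipschitz case) and set $M_i' := M_i \setminus \varphi_i(\mathrm{int}\, D^n)$, so that $\partial M_i' \cong S^{n-1}$. Let $f : \partial M_1' \to \partial M_2'$ be the canonical identification, chosen to be an isometry (hence bi-Lipschitz) by equipping both boundary spheres with the round metric. Then $M_1 \# M_2 = M_1' \cup_f M_2'$.

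Next I would verify that this gluing satisfies the second hypothesis of Theorem \ref{Theorem_main}. Since $n \geq 3$, $\pi_1(S^{n-1}) = 0$, which is trivially amenable and injects into every group. The sphere $S^{n-1}$ is $(n-2)$-connected, so $\pi_k(S^{n-1}) = 0$ for $k \in \{2, \ldots, n-2\}$. And $M_1 \# M_2$ is closed, so the boundary amenability condition is vacuous. Theorem \ref{Theorem_main} therefore yields
\[
\|M_1 \# M_2\| = \|M_1', \partial M_1'\| + \|M_2', \partial M_2'\|,
\]
and the same identity for $\|\cdot\|_{\Lip}$. To finish, it suffices to show $\|M_i\| = \|M_i', \partial M_i'\|$ (in both norms), which in turn reduces to $\|D^n, S^{n-1}\| = 0$: a second application of Theorem \ref{Theorem_main} to $M_i = M_i' \cup_{S^{n-1}} D^n$ (hypotheses checked exactly as above) gives $\|M_i\| = \|M_i', \partial M_i'\| + \|D^n, S^{n-1}\|$, and a third application to $S^n = D^n \cup_{S^{n-1}} D^n$ gives $\|S^n\| = 2\|D^n, S^{n-1}\|$. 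Since $S^n$ admits a self-map of degree $2$ we have $\|S^n\| = 0$, and since $S^n$ is compact we also get $\|S^n\|_{\Lip} = \|S^n\| = 0$, whence $\|D^n, S^{n-1}\| = 0$ in both norms.

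The corollary is essentially a bookkeeping exercise once Theorem \ref{Theorem_main} is in hand; no step is an obstacle in its own right. The only point requiring mild care is the Lipschitz setting, where one must choose mutually compatible Riemannian metrics on $M_i$, $M_i'$, $D^n$, and $S^n$ so that every gluing map encountered is bi-Lipschitz. This is harmless because all gluing loci are round $(n-1)$-spheres with their standard metric.
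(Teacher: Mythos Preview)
Your argument is correct and is exactly the intended derivation: the paper states the corollary immediately after Theorem~\ref{Theorem_main} without proof, and your three applications of the theorem (to $M_1'\cup_f M_2'$, to $M_i'\cup_{S^{n-1}} D^n$, and to $D^n\cup_{S^{n-1}} D^n$) together with $\|S^n\|=0$ constitute the natural unpacking. The use of the second bullet of Theorem~\ref{Theorem_main} is the right choice, since $S^{n-1}$ is $(n-2)$-connected but not aspherical for $n\geq 3$, and the boundary-amenability clause is vacuous in all three gluings.
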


The geometric nature of the proof allows us to prove the above statement for most other versions of simplicial volume described by Gromov in \cite{G}. For simplicity we will prove Theorem \ref{Theorem_main} only for the locally finite simplicial volume, but we explain in Section \ref{section_prelims} how to easily generalise the proof to other versions of simplicial volume.

Theorem \ref{Theorem_main} allows us to generalise some degree theorems. Note that a connected sum of two manifolds $M_1$ and $M_2$ is well defined up to homeomorphism, but not in the world of Riemannian manifolds. In particular, the volume of $M_1\# M_2$ is not well defined, because it depends on the volume of discs which are cut out from $M_1$ and $M_2$ and on the volume of the glued cylinder $S^{n-1}\times I$, which may be any value in $\mathbb{R}_+$. Depending on these, the volume of $M_1\# M_2$ may vary from some value smaller than $M_1\# M_2$ to $+\infty$. In the following, we use the convention that the connected sum $M_1\# M_2$ (which is still not well defined as a Riemannian manifold) satisfy the additivity rule with respect to volume, i.e.
\[
\vol(M_1\# M_2) = \vol(M_1)+\vol(M_2),
\]
assuming $M_1$ and $M_2$ are of dimension $\geq 3$ (in the case $\dim M_i=1$ for $i=1,2$ $M_i$ are circles and there is nothing interesting to prove, in the case $\dim =2$ manifolds $M_1$, $M_2$ and $M_1\# M_2$ are surfaces, which usually come with metrics of constant curvatures and do not satisfy the above additivity with respect to both simplicial and Riemannian volume)

\begin{thm}
Let $\mathcal{C}$ be the smallest class of manifolds containing locally symmetric spaces of non-compact type (with standarized metrics) of finite volume and closed under taking products and connected sums. For every $n\in\mathbb{N}$ there exists a constant $D_n\in\mathbb{R}$ such that if $M$ and $N$ are $n$-dimensional Riemannian manifolds satisfying $|\sec(M)|\leq 1$, $\Ricci(M)\geq -(n-1)$ and $N\in\mathcal{C}$ then for every proper Lipschitz map $f:M\rightarrow N$ we have
\[
|\deg f| \leq D_n\frac{\vol(M)}{\vol(N)}.
\]
\end{thm}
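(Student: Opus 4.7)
The proof plan combines three standard ingredients. First, the functoriality $|\deg f|\cdot\|N\|_{\Lip}\leq\|M\|_{\Lip}$ for proper Lipschitz maps between oriented $n$-dimensional Riemannian manifolds. Second, a dimension-dependent upper estimate $\|M\|_{\Lip}\leq C_n\vol(M)$ valid under the curvature hypotheses on $M$. Third, a dimension-dependent lower estimate $\|N\|_{\Lip}\geq c_n\vol(N)$ valid for all $n$-dimensional $N\in\mathcal{C}$. Given these, the conclusion is immediate with $D_n:=C_n/c_n$.

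The upper estimate is the Lipschitz analogue of Gromov's isoperimetric volume bound, proved via diffusion/straightening of a locally finite Lipschitz fundamental cycle; it is available in the literature on the Lipschitz simplicial volume for complete Riemannian manifolds with two-sided sectional curvature bounds and a lower Ricci bound, and I would apply it directly to $M$.

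The lower estimate is established by induction on the construction of $\mathcal{C}$. The base case is the proportionality principle for finite-volume locally symmetric spaces of non-compact type with standardized metric: such an $N$ satisfies $\|N\|_{\Lip}=\mathrm{prop}_{\wt{N}}\cdot\vol(N)$ with a positive proportionality constant depending only on the universal cover, and since only finitely many symmetric spaces of non-compact type occur in each dimension, the minimum $c_n^{\mathrm{sym}}>0$ is well-defined. For the inductive step, products are handled by the Lipschitz analogue of the lower half of Theorem \ref{Theorem_product_inequality},
\[
\|N_1\times N_2\|_{\Lip}\geq\|N_1\|_{\Lip}\cdot\|N_2\|_{\Lip}\geq c_{n_1}c_{n_2}\vol(N_1\times N_2),
\]
and connected sums by the corollary of Theorem \ref{Theorem_main} combined with the stipulated convention $\vol(N_1\# N_2)=\vol(N_1)+\vol(N_2)$,
\[
\|N_1\# N_2\|_{\Lip}=\|N_1\|_{\Lip}+\|N_2\|_{\Lip}\geq\min(c_{N_1},c_{N_2})\vol(N_1\# N_2).
\]
A product raises the dimension while a connected sum preserves it without decreasing the infimum constant, so a finite induction---primary on $n$, secondary on the depth of the construction within $\mathcal{C}$---produces a uniform $c_n>0$ for every $n$.

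The main obstacle is precisely this propagation of a positive lower constant through connected-sum constructions, which is the new contribution supplied by Theorem \ref{Theorem_main}; the product-inequality and proportionality pieces were already known. Once this is in place, chaining the three estimates yields
\[
|\deg f|\cdot c_n\vol(N)\leq|\deg f|\cdot\|N\|_{\Lip}\leq\|M\|_{\Lip}\leq C_n\vol(M),
\]
which is the claimed bound.
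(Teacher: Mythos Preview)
Your proposal is correct and follows essentially the same approach as the paper: both combine functoriality of the Lipschitz simplicial volume, the L\"{o}h--Sauer upper bound $\|M\|_{\Lip}\leq B_n\vol(M)$, and a lower bound $\|N\|_{\Lip}\geq c_n\vol(N)$ for $N\in\mathcal{C}$ established by double induction on the dimension and on the complexity (depth) of the construction, using the product inequality and Theorem~\ref{Theorem_main} for the inductive steps. The paper makes the recursive definition of the constant slightly more explicit via $C_N=\min\{C'_N,\,C_{n_1}C_{n_2}:n_1+n_2=N,\ n_1,n_2<N\}$, but this is exactly what your observation that products raise dimension while connected sums preserve it amounts to.
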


\begin{proof}
Note that by \cite[Theorem 1.8]{LS} we have $\|M\|_{\Lip}\leq B_n\cdot\vol(M)$ for some constant $B_n$ depending only on $n$. Therefore we have
\[
|\deg f|\leq \frac{\|M\|}{\|N\|} \leq B_n\frac{\vol(M)}{\|N\|}.
\]
We will prove that there exists a constant $C_n$ such that
\[
\|N\|_{\Lip} \geq C_n\cdot \vol(N).
\]
by the double induction on $n$ and on the 'complexity' $k$ of $N$, i.e. minimal number of times we need to take products or connected sums of locally symmetric spaces of non-compact type in order to obtain $N$. For $n=1$ there are no $1$-dimensional manifolds in $\mathcal{C}$ and for $n=2$, as well as for any $n\in\mathbb{N}$ and 'complexity' $k=0$ the result follows from the result of L\"{o}h and Sauer \cite{LS}. Assume the theorem is true for $n<N$ and let $C'_N$ be a constant for which the theorem holds for 'complexity' $k=0$. We will show by induction on $k$ that the theorem holds in general for
\[
C_N = \min\{C'_N, C_{n_1}\cdot C_{n_2}\::\: n_1+n_2 = N\,,\, n_1,n_2<N\}.
\]
If $N = N_1\times N_2$, then
\[
\|N\|_{\Lip} \geq \|N_1\|_{\Lip}\times \|N_2\|_{\Lip} \geq C_{n_1}\cdot C_{n_2}\cdot \vol(N_1\times N_2) \geq C_N\vol(N)
\]
On the other hand, if $N= N_1\# N_2$ then
\[
\|N\|_{\Lip} = \|N_1\|_{\Lip} + \|N_2\|_{\Lip} \geq C_N\cdot (\vol(N_1)+\vol(N_2)) = C_N\cdot\vol(N).
\]
This finishes the proof.
\end{proof}

\subsection*{Organization of this work}
In Section \ref{section_prelims} we fix the notation, introduce some basic lemmas and explain the technique allowing us to generalize all the proofs to various geometric general simplicial volumes. In Section \ref{section_subadditivity} we prove the subadditivity of the simplicial volume with respect to certain gluings and introduce some machinery useful in the proof of superadditivity. In particular, in Sections \ref{subsection_delta_sets}, \ref{subsection_automorphisms} and \ref{subsection_PI} we reintroduce Gromov's machinery of multicomplexes and adapt it to our situation, while in Section \ref{subsecton_local_barycentric} we introduce piecewise barycentric subdivision. In Section \ref{section_amenable_trick} we prove Theorem \ref{Theorem_main} for gluings along aspherical boundary components. Finally, in Section \ref{section_higher_dim_trick} we introduce 'higher dimensional cell trick' which allows to generalize a little bit the result from the previous section and finish the proof of Theorem \ref{Theorem_main}.

\section{Preliminaries}\label{section_prelims}
\subsection{Notation}\label{subsection_notation}
Throughout this paper we will often modify geometrically singular chains and simplices, therefore we need to clarify notation and recall some basic facts. In the following section, $X$ is a topological space.

For $k\in\mathbb{N}$, we treat a simplex $\Delta^k$ as a metric subspace of $\mathbb{R}^{k+1}$ defined as
\[
\Delta^k = \{(x_0,...,x_k)\in\mathbb{R}_+^k\::\: \sum_{i=0}^k x_i = 1\}.
\]
For $i=0,...,k$ we denote also by $\delta^i:\Delta^{k-1}\rightarrow \Delta^k$ the standard embeddings onto $i$-th face of $\Delta^k$:
\[
\delta^i(x_0,...,x_{k-1}) = (x_0,...,x_{i-1}, 0, x_i,...,x_{k-1}).
\]
Note that there is a natural action of the symmetric group $\Sigma_{k+1}$ on $\Delta^k$ by permuting the coordinates. This action induces an action of $\Sigma_{k+1}$ on the set of singular simplices $C(\Delta^k, X)$ by
\[
(\pi \cdot \sigma)(x) = \sigma(\pi\cdot x)
\]
where $\pi\in\Sigma_{k+1}$, $\sigma\in C(\Delta^k, X)$ and $x\in \Delta^k$.

Let $\sigma\in C(\Delta^k, X)$ be a singular simplex. $\sigma$ is therefore formally a map. However, in many cases we will be interested rather in the image of $\sigma$ than in the map itself. Therefore we will use $\sigma$ to denote both the map and its image, (e.g. for $Y\subset X$ we will denote by $\sigma\subset Y$ the fact that $\im(\sigma)\subset Y$). It should be clear from the context which of these two meanings we use. The same applies to vertices, edges and higher-dimensional faces of $\sigma$, which are formally maps, but we will use their symbols also to denote their images in $X$.

Let $c=\sum_i a_i\sigma_i\in C_k^{\lf,\Lip}(X)$ be a locally finite singular chain. It can be considered as a map $C(\Delta^k,X)\rightarrow\mathbb{R}$, hence we will sometimes use the corresponding notation, e.g. $c(\sigma)$ to denote a coefficient of $\sigma$ in a chain $c$ or $\supp(c)$ as the set of simplices with non-zero coefficients in $c$. In particular,
\[
c = \sum_{\sigma\in C(\Delta^k,X)} c(\sigma)\cdot\sigma = \sum_{\sigma\in \supp(c)} c(\sigma)\cdot\sigma.
\]

By the $l$-skeleton of $c$ we will understand all $l$-faces of all simplices $\sigma\in\supp(c)$. We will denote it by $c^{(l)}$. In particular, $c^{(k)}=\supp(c)$.

We will denote by $|c|_1$ the $\ell^1$-norm of a chain $c$ and by $\|[c]\|_1$ the $\ell^1$-semi-norm of its homology class (if $c$ is a cycle). Moreover, let $S\subset C(\Delta^k, X)$ be some subset of singular $k$-simplices We will denote by $|c|^S_1$ the $\ell^1$-norm of $c$ counted on the simplices from $S$, i.e.
\[
|c|^S_1 := \sum_{\sigma\in S}|c(\sigma)|.
\]
We will be particularly interested in three types of such subsets $S$. For $Y\subset X$, we define
\begin{itemize}
\item $n(Y) := C(\Delta^k, X)\setminus C(\Delta^k, Y)$ (the set of simplices not contained in $Y$);
\item $e(Y):= \{\sigma\in C(\Delta^k, X)\::\: \sigma \text{ has some edge in }Y\}$;
\item $ne(Y) := C(\Delta^k, X)\setminus e(Y) = \{\sigma\in C(\Delta^k, X)\::\: \sigma \text{ has no edges in }Y\}$.
\end{itemize}

\subsection{Chain homotopies}\label{subsection_chain_homotopies}
Given a chain $c\in C_k^{\lf}(X)$, we will often try to modify it without increasing its $\ell^1$-norm. The basic fact allowing us to perform such operations is the following.

\begin{lemma}\label{lemma_basic_chain_homotopy}
Let $N\in\mathbb{N}$ and let $H^{\sigma}_k:C(\Delta^k,X)\times I \rightarrow C(\Delta^k,X)$ for $\sigma\in C(\Delta^k,X)$ and $k<N$ be a system of homotopies such that for every $k<N$, $\sigma\in C(\Delta^k, X)$, and $i=0,...,k$,
\[
H^{\sigma}_k|_{\partial_i\Delta^k\times I} = H^{\partial_i\sigma}_{k-1}.
\]
Then for every chain $c \in C_k(X)$ for $k<N$, the chain
\[
c' = \sum_{\sigma\in C(\Delta^k, X)} c(\sigma)\cdot H^{\sigma}_k(\cdot, 1)
\]
is chain homotopic to $c$ and $|c'|_1\leq |c|_1$.
\end{lemma}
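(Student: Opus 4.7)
The plan is to construct an explicit chain homotopy $P\colon C_k(X)\to C_{k+1}(X)$ for each $k<N$ via the standard prism operator applied to the given system of homotopies. There is an implicit assumption that $H^\sigma_k(\cdot,0)=\sigma$ (otherwise $c'$ would not even be homotopic to $c$ on the nose), and I would proceed under that convention. The essential role of the compatibility hypothesis $H^\sigma_k|_{\partial_i\Delta^k\times I}=H^{\partial_i\sigma}_{k-1}$ is to guarantee that the boundary contribution of the prism on $\partial_i\Delta^k$ agrees with the prism on the face simplex $\partial_i\sigma$, so that the local chain homotopies assemble into a global one.

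First I would fix, once and for all, the standard triangulation of $\Delta^k\times I$ into $(k+1)$ top-dimensional simplices $\tau_0,\dots,\tau_k$, where $\tau_j$ is the convex hull of $(v_0,0),\dots,(v_j,0),(v_j,1),\dots,(v_k,1)$. For a simplex $\sigma\in C(\Delta^k,X)$, define
\[
P(\sigma) \;=\; \sum_{j=0}^{k}(-1)^{j}\, H^\sigma_k\circ\tau_j,
\]
and extend linearly. The standard prism identity
\[
\partial(\Delta^k\times I) \;=\; \Delta^k\times\{1\}\;-\;\Delta^k\times\{0\}\;-\;\partial\Delta^k\times I
\]
(with the induced orientations) yields, after post-composing with $H^\sigma_k$,
\[
\partial P(\sigma) \;=\; H^\sigma_k(\cdot,1)\;-\;\sigma\;-\;\sum_{i=0}^{k}(-1)^{i} Q_i(\sigma),
\]
where $Q_i(\sigma)$ is the prism built from the restriction $H^\sigma_k|_{\partial_i\Delta^k\times I}$. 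Here the compatibility hypothesis does its job: it identifies $H^\sigma_k|_{\partial_i\Delta^k\times I}$ with $H^{\partial_i\sigma}_{k-1}$, so that $Q_i(\sigma)=P(\partial_i\sigma)$, and consequently
\[
\partial P(\sigma) + P(\partial\sigma) \;=\; H^\sigma_k(\cdot,1)-\sigma.
\]
Summing with coefficients $c(\sigma)$ gives $\partial P(c)+P(\partial c)=c'-c$, exhibiting the chain homotopy.

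The norm inequality is then immediate from the triangle inequality: since each $H^\sigma_k(\cdot,1)$ is a single singular $k$-simplex,
\[
|c'|_1 \;\leq\; \sum_\sigma |c(\sigma)| \;=\; |c|_1,
\]
with possible strict inequality only when distinct simplices in $\supp(c)$ are sent by the homotopies to the same simplex and cancellation occurs. The main ``obstacle'' is not conceptual but notational: one must carefully keep track of the signs $(-1)^j$ in the prism triangulation and verify term-by-term that the face contributions on $\partial\Delta^k\times I$ under the identification supplied by the compatibility condition produce exactly $P\partial$ and not some reordering. Once this bookkeeping is done, both conclusions of the lemma drop out of the single prism identity above.
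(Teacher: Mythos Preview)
Your proof is correct and is precisely the standard prism-operator argument that the paper has in mind: the paper does not give its own proof but simply refers to \cite[Proof of Theorem 2.10]{HAT} and \cite[Lemma 2.13]{LS}, which is exactly the construction you wrote out. Your remark that one must implicitly assume $H^\sigma_k(\cdot,0)=\sigma$ is also well taken.
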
 

The proof is standard and is described e.g. in \cite[Proof of Theorem 2.10]{HAT} or \cite[Lemma 2.13]{LS}. However, the above lemma cannot be used for locally finite and Lipschitz chains without some additional assumptions, which are not always satisfied. On the other hand, in most cases a local modification of a given chain would suffice.

\begin{lemma}\label{lemma_local_chain_homotopy}
Let $N\in\mathbb{N}$ and let $H^{\sigma}_k: C(\Delta^k,X)\times I \rightarrow C(\Delta^k,X)$ for $k<N$ and $\sigma\in C(\Delta^k,X)$ be a system of homotopies such that for every $k<N$, $\sigma\in C(\Delta^k, X)$, and $i=0,...,k$ we have
\[
H^{\sigma}_k|_{\partial_i\Delta^k\times I} = H^{\partial_i\sigma}_{k-1}.
\]
Then for every compact subset $Y\subset X$ and every cycle $c \in C^{\lf}_k(X)$ for $k<N$, there is a cycle $c' = \sum_{\sigma\in C(\Delta^k, X)} c(\sigma)\cdot\tau_{\sigma} \in C^{\lf}_k(X)$ homologuous to $c$ such that
\begin{enumerate}
\item $\tau_{\sigma} = \sigma$ for almost all $\sigma\in \supp(c)$;
\item $\tau_{\sigma} = H_k^{\sigma}(\cdot, 1)$ for every $\sigma\in\supp(c)$ such that $\sigma \cap Y\neq \emptyset$;
\item $|c'|_1\leq |c|_1$.
\end{enumerate}
\end{lemma}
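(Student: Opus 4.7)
The plan is to adapt the prism construction from Lemma~\ref{lemma_basic_chain_homotopy} by slowing the homotopy down with a spatial cutoff so that the modification is nontrivial only on a compact neighborhood of $Y$. The key observation is that if one evaluates each $H^\sigma_k$ at a point-dependent time coming from a continuous function on $X$, the resulting endpoint simplices still match on common faces and hence assemble into a cycle.

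Since $c$ is locally finite, the set $A_0 := \{\sigma\in\supp(c) : \sigma\cap Y\neq\emptyset\}$ is finite and $Y' := \bigcup_{\sigma\in A_0}\sigma(\Delta^k)$ is compact. Choose a compact $Y''$ with $Y'\subset\mathrm{int}(Y'')$ and a continuous $\phi:X\to[0,1]$ with $\phi\equiv 1$ on $Y'$ and $\phi\equiv 0$ outside $Y''$. For every singular $k$-simplex $\sigma$, set
\[
\tau_\sigma(x) := H^\sigma_k\bigl(x,\,\phi(\sigma(x))\bigr), \qquad c' := \sum_\sigma c(\sigma)\,\tau_\sigma.
\]
If $\sigma\in A_0$ then $\sigma(\Delta^k)\subset Y'$, so $\phi\circ\sigma\equiv 1$ and $\tau_\sigma=H^\sigma_k(\cdot,1)$, giving (2); if $\sigma\cap Y''=\emptyset$ then $\tau_\sigma=\sigma$, which covers all but the finitely many simplices of $\supp(c)$ meeting $Y''$ and yields both (1) and local finiteness of $c'$. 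Condition (3) is the triangle inequality, since each $\tau_\sigma$ is a single simplex.

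For cycleness and homology: from $\phi(\sigma(\delta^i y))=\phi(\partial_i\sigma(y))$ together with the face-compatibility of $H$, one obtains $\partial_i\tau_\sigma=\tau_{\partial_i\sigma}$. Collecting the boundary of $c'$ by faces yields
\[
\partial c' = \sum_\mu\Bigl(\sum_{(\sigma,i):\,\partial_i\sigma=\mu}(-1)^i c(\sigma)\Bigr)\tau_\mu = \sum_\mu (\partial c)(\mu)\,\tau_\mu = 0.
\]
The face-compatible auxiliary homotopies $K^\sigma(x,t):=H^\sigma_k(x,\,t\,\phi(\sigma(x)))$ plug into the prism machinery of Lemma~\ref{lemma_basic_chain_homotopy} to produce an operator $P$ satisfying $\partial P\sigma + P\partial\sigma = \tau_\sigma-\sigma$. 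A straightforward check using local finiteness of $c$ (and the fact that $K^\sigma$ is constant in $t$ as soon as $\sigma\cap Y''=\emptyset$) shows that $Pc := \sum_\sigma c(\sigma)\,P\sigma$ is locally finite, so $c'-c = \partial(Pc)$ in $C^{\lf}_k(X)$ and $[c']=[c]$ in $H^{\lf}_k(X)$.

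The main subtle point is condition (2): simplices of $\supp(c)$ that merely touch $Y$ must be fully modified, not only those lying inside $Y$. This is why $\phi$ must be identically $1$ on the enlarged set $Y'$ rather than just on $Y$, and local finiteness of $c$ is precisely what keeps $Y'$ compact so that $\phi$ can still be arranged to have support in $Y''$ and to leave almost all of $c$ untouched.
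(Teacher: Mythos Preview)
Your argument is correct and is essentially the same as the paper's: both enlarge $Y$ to $Y'=\bigcup_{\sigma\cap Y\neq\emptyset}\im(\sigma)$, choose a compactly supported cutoff equal to $1$ on $Y'$, and set $\tau_\sigma(x)=H^\sigma_k(x,\phi(\sigma(x)))$. Your write-up is in fact more explicit than the paper's (which leaves the face-compatibility and homology checks as ``straightforward''), and you have also silently corrected a small typo in the paper's formula $(f*H^\sigma_k)(x)=H^\sigma_k(x,f(x))$, where $f(\sigma(x))$ is what is meant.
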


\begin{proof}
Let $Y' = \cup_{\sigma\in\supp(c)\::\: \sigma\cap Y\neq\emptyset} \im(\sigma)$ and let $f:X\rightarrow\mathbb{R}$ be a compactly supported function such that $f|_{Y'} \equiv 1$. Define 
\[
c' = \sum_{\sigma\in C(\Delta^k, X)} c(\sigma)\cdot(f*H^{\sigma}_k),
\]
where $f*H^{\sigma}_k\in C(\Delta^k, X)$ for $k<N$ and $\sigma\in C(\Delta^k, X)$ is defined as
\[
(f*H^{\sigma}_k)(x) = H^{\sigma}_k(x, f(x)).
\]
Checking that $c'$ satisfies all the required properties is straightforward.
\end{proof}

The first application of the above lemma is the proposition that allows us to generalize most the proofs in this paper to the Lipschitz case. However, before we state it, we need some more terminology.

\begin{defi}
Let $c, c'\in C^{\lf}_k(X)$ be two chains and let $Y\subset X$. We say that $c'$ is a \emph{finite modification} of $c$ in $C^{\lf}_k(X,Y)$ if 
\[
(\Sigma_{k+1}\cdot\supp(c))\Delta (\Sigma_{k+1}\cdot\supp(c'))
\]
is finite, where $\Delta$ is the symmetric difference operator. In other words, $c$ and $c'$ have almost the same support relative to $Y$ up to the action of $\Sigma_{k+1}$. We say also that $c'$ is a \emph{finite approximation} of $c$ if it is finite modification of $c$ and is homologuous to $c$.
\end{defi}

\begin{defi}
We say that $X$ has the \emph{Lipschitz simplicial approximation property} (LSA) if every singular simplex $\sigma\in C(\Delta^k, X)$ is homotopic to some Lipschitz singular simplex $\sigma'\in \Lip(\Delta^k, X)$ and if $\sigma|_{\partial\Delta^k}$ is a Lipschitz map, then we can assume this homotopy is constant on $\partial\Delta^k\times I$.
\end{defi}

\begin{prop}\label{prop_generalization_Lipschitz}
Assume that $X$ has LSA. Let $c\in C^{\lf,\Lip}_*(X,Y)$ be a cycle and let $c'\in C^{\lf}_*(X,Y)$ be its finite modification (in $C_*^{\lf}(X,Y)$). Then there exists a finite approximation $c''\in C_*^{\lf,\Lip}(X,Y)$ of $c'$ such that $|c''|_1\leq |c'|_1$.
\end{prop}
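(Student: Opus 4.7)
The plan is to exploit LSA to replace the finitely many potentially non-Lipschitz simplices of $c'$ with Lipschitz ones, packaging the replacement as a single application of Lemma~\ref{lemma_local_chain_homotopy}. Since $c\in C_*^{\lf,\Lip}(X,Y)$, every simplex in $\Sigma_{k+1}\cdot\supp(c)$ is Lipschitz (the $\Sigma_{k+1}$-action on $\Delta^k$ is by a linear isometry, hence preserves the Lipschitz property). The finite modification hypothesis then implies that the set of simplices of $c'$ that might fail to be Lipschitz, namely
\[
S:=\supp(c')\setminus\bigl(\Sigma_{k+1}\cdot\supp(c)\bigr),
\]
is finite. Fix a compact set $Y_0\subset X$ containing the images of every simplex in $S$.

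Next, I would construct, by induction on $k$, a system of homotopies $H^\sigma_k:\Delta^k\times I\to X$ for every $\sigma\in C(\Delta^k,X)$, satisfying the face-compatibility $H^\sigma_k|_{\partial_i\Delta^k\times I}=H^{\partial_i\sigma}_{k-1}$, with the added properties that $H^\sigma_k(\cdot,1)$ is always Lipschitz and that $H^\sigma_k$ is the constant homotopy whenever $\sigma$ itself is Lipschitz. The second stipulation is consistent with faces because every face of a Lipschitz simplex is Lipschitz, so inductively the face homotopies are already constant. For a non-Lipschitz $\sigma$, the previously defined $H^{\partial_0\sigma}_{k-1},\dots,H^{\partial_k\sigma}_{k-1}$ agree on lower faces by the inductive compatibility and hence assemble into a continuous partial homotopy on $\Delta^k\times\{0\}\cup\partial\Delta^k\times I$, taking value $\sigma$ at time $0$ and a Lipschitz map on $\partial\Delta^k$ at time $1$. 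The homotopy extension property for the CW pair $(\Delta^k,\partial\Delta^k)$ extends this partial homotopy to all of $\Delta^k\times I$, ending at a map $\sigma_1$ with Lipschitz restriction to $\partial\Delta^k$. Applying LSA rel boundary to $\sigma_1$ produces a second homotopy, constant on $\partial\Delta^k$, ending at a Lipschitz simplex $\sigma_2$; concatenating the two homotopies gives the desired $H^\sigma_k$.

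With this system in hand, I apply Lemma~\ref{lemma_local_chain_homotopy} to the cycle $c'$ with the compact set $Y_0$ to obtain a cycle $c''\in C^{\lf}_*(X,Y)$ homologous to $c'$, satisfying $|c''|_1\leq|c'|_1$, with $\tau_\sigma=H^\sigma_k(\cdot,1)$ for every $\sigma\in\supp(c')$ meeting $Y_0$ and $\tau_\sigma=\sigma$ for almost all other simplices. By local finiteness, only finitely many simplices of $c'$ are altered. Any $\sigma\in\supp(c')\setminus\Sigma_{k+1}\cdot\supp(c)$ lies in $S$ and hence meets $Y_0$, so its modification $H^\sigma_k(\cdot,1)$ is Lipschitz by construction; any altered $\sigma\in\Sigma_{k+1}\cdot\supp(c)$ is already Lipschitz, so $H^\sigma_k$ is constant by design and $\tau_\sigma=\sigma$ remains Lipschitz. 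Every simplex of $c''$ is therefore Lipschitz, and since the finitely many replaced simplices contribute only finitely many new Lipschitz constants while the rest share the uniform bound $\Lip(c)<\infty$, the family admits a common upper bound. Thus $c''\in C_*^{\lf,\Lip}(X,Y)$, and by construction $c''$ is a finite modification of $c'$, hence a finite approximation.

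The main technical subtlety is the inductive assembly of the compatible family $\{H^\sigma_k\}$: the homotopy extension step must be interlocked carefully with the rel-boundary version of LSA so that every simplex one might encounter has both a well-defined endpoint and a Lipschitz one, with prescribed behaviour on lower-dimensional faces. Once this skeleton of homotopies is in place, local finiteness, the norm bound $|c''|_1\leq|c'|_1$, and the Lipschitz property of $c''$ all follow immediately from Lemma~\ref{lemma_local_chain_homotopy}.
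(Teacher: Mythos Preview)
Your proof is correct and follows essentially the same route as the paper's: build a face-compatible system of homotopies $H^\sigma_k$ via LSA with Lipschitz endpoints, then invoke Lemma~\ref{lemma_local_chain_homotopy} on $c'$ with a compact set capturing the finitely many non-Lipschitz simplices. Your additional stipulation that $H^\sigma_k$ be the constant homotopy whenever $\sigma$ is already Lipschitz is a refinement the paper does not spell out; it cleanly handles the ``intermediate'' simplices $\tau_\sigma=(f*H^\sigma_k)$ arising in the proof of Lemma~\ref{lemma_local_chain_homotopy} for Lipschitz $\sigma$ that happen to meet $\supp(f)$ but not $Y_0$, ensuring these remain Lipschitz rather than being deformed into something uncontrolled.
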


\begin{proof}
First, using property LSA we construct a system of homotopies $H^{\sigma}_k$ for $\sigma\in C(\Delta^k, X)$ and $k\in\mathbb{N}$ satisfying the assumptions of Lemma \ref{lemma_basic_chain_homotopy} by induction on $k$. For $k=0$ we choose constant homotopies, and if $H^{\sigma}_l$ are defined for $\sigma\in C(\Delta^l, X)$ and $l=0,...,k-1$ then we define $H^{\sigma}_k$ for $\sigma\in C(\Delta^k, X)$ simply as a homotopy extension of $H^{\sigma}_k|_{\partial\Delta^k\times I}$ such that $H^{\sigma}_k(\cdot, 1)$ is Lipschitz. Then we apply Lemma \ref{lemma_local_chain_homotopy} to this system, cycle $c'$ and $Y= \cup_{\sigma\in(\Sigma_{k+1}\cdot\supp(c))\Delta(\Sigma_{k+1}\cdot\supp(c'))} \im(\sigma)$ and obtain a cycle $c''$ satisfying the required conditions.
\end{proof}

\begin{remark}\label{remark_generalization}
In the rest of this work, for simplicity we will state and prove the results only for the locally finite simplicial volume. However, the main statements are proved by constructing a suitable cycle being a finite modification/approximation of the Lipschitz one. It is also easy to observe that every Riemannian manifold has LSA by Whitney approximation theorem \cite[Theorem 6.19]{LeeS}. Therefore the actual proof of \ref{Theorem_main} for the Lipschitz simplicial volume follows easily from the above proposition and the remaining part of this paper.
\end{remark}

\section{Amenability and subadditivity of the $\ell^1$-norm}\label{section_subadditivity}
This section is devoted to the proof of the subadditivity of the simplicial volume with respect to certain gluings, i.e. the first part of Theorem \ref{Theorem_main}. In fact, we prove a little bit more general fact.

\begin{prop}\label{prop_subadditivity}
Let $(X_1,Y_1)$ and $(X_2,Y_2)$ be two pairs of topological spaces and let $Z_i\subset Y_i$ for $i=1,2$ be two compact path-connected components such that there is a homeomorphism $f:Z_1\rightarrow Z_2$. Assume moreover that $\im(\pi_1(Z_1)\rightarrow \pi_1(X_1\cup_f X_2))$ is amenable. Let $k\in\mathbb{N}$ and let $c_1\in C^{\lf}_k(X_1, Y_1)$ and $c_2\in C^{\lf}_k(X_2, Y_2)$ be two cycles such that
\[
f_*([(\partial c_1)|_{Z_1}]) = -[(\partial c_2)|_{Z_2}]
\]
holds in $H_{k-1}(Z_2)$. Then for every $\varepsilon>0$ there exists a cycle $c\in C_k^{\lf}(X_1\cup_f X_2, (Y_1\setminus Z_1)\cup (Y_2\setminus Z_2))$ such that its restrictions $c'_1$ and $c'_2$ to $C_k^{\lf}(X_1\cup_f X_2, Y_1\cup X_2)$ and $C_k^{\lf}(X_1\cup_f X_2, X_1\cup Y_2)$ respectively are finite approximations of $c_1$ and $c_2$ respectively, and
\[
|c|_1\leq |c_1|_1 + |c_2|_1 +\varepsilon.
\]
In particular, $\|[c]\|_1 \leq \|[c_1]\|_1+ \|[c_2]\|_1$.
\end{prop}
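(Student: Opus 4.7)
The plan is to glue $c_1$ and $c_2$ along the common boundary component $Z := Z_1 \cong_f Z_2$ inside $X := X_1 \cup_f X_2$, using a small correction chain to compensate for the fact that the two boundary restrictions only match up to homology. Writing $z_i := (\partial c_i)|_{Z_i}$ and $z := z_1 + z_2$ (after identifying $Z_1$ with $Z_2$ via $f$), the hypothesis $f_*[z_1] = -[z_2]$ says exactly that $z$ is null-homologous in $Z$, hence in $X$. Assuming one can produce a finite chain $b \in C_k(X)$ with $\partial b = z$ and $|b|_1 < \varepsilon$, the natural candidate is $c := c_1 + c_2 - b$, which is locally finite (since $c_1,c_2$ are locally finite and $b$ has finite support) and satisfies
\[
\partial c = (\partial c_1 - z_1) + (\partial c_2 - z_2),
\]
a chain supported in $(Y_1 \setminus Z_1) \cup (Y_2 \setminus Z_2)$. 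The triangle inequality then immediately gives $|c|_1 \le |c_1|_1 + |c_2|_1 + \varepsilon$, and the final seminorm inequality $\|[c]\|_1 \le \|[c_1]\|_1 + \|[c_2]\|_1$ follows by taking infima over representatives and sending $\varepsilon \to 0$.

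The heart of the matter—and what I expect to be the main obstacle—is producing such a small-norm filling $b$. This is the classical amenability trick in simplicial volume theory: since the image of $\pi_1(Z_1)$ in $\pi_1(X)$ is amenable and $z$ has positive dimension, any null-homologous cycle supported in $Z$ should admit fillings in $X$ of arbitrarily small $\ell^1$-norm. The standard route is via Gromov's equivalence theorem and an averaging argument, lifting $z$ to the cover of $X$ associated with the amenable image and then averaging an arbitrary filling of the lift against an invariant mean on that group before pushing back down. I expect the paper either to invoke this as a black box or, more in keeping with the rest of the work, to reprove it using the multicomplex machinery introduced in the later subsections, since a closely related construction will be needed for the (harder) superadditivity direction.

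Finally, for the finite approximation statement, I would split $b = b_1 + b_2$ with $b_i \in C_k(X_i)$, assigning each simplex of $b$ (which lies in $X_1$ or in $X_2$) to exactly one summand. Modulo $C_*^{\lf}(Y_1 \cup X_2)$ both $c_2$ and $b_2$ vanish, since each is supported in $X_2$, so $c'_1 \equiv c_1 - b_1$ in the quotient complex; finiteness of $\supp b_1$ gives the finite modification property, and homologousness of $c_1 - b_1$ and $c_1$ in $C_*^{\lf}(X, Y_1 \cup X_2)$ reduces to a direct bookkeeping check using that the simplices of $b_1$ outside $Y_1$ can be absorbed via the excision identification $H_*(X,Y_1\cup X_2)\cong H_*(X_1,Y_1)$. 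The symmetric argument handles $c'_2$.
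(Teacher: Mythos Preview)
Your proposed route has a genuine gap at exactly the point you flag as ``the heart of the matter'': there is no filling $b\in C_k(X)$ of $z$ with $|b|_1<\varepsilon$ unless $z=0$. The boundary operator is bounded of norm $k+1$, so any $b$ with $\partial b=z$ satisfies $|b|_1\ge |z|_1/(k+1)$, and this lower bound is positive whenever $z\neq 0$. Amenability of $\im(\pi_1(Z)\to\pi_1(X))$ tells you that homology classes carried by $Z$ have vanishing $\ell^1$-seminorm (i.e.\ admit \emph{representatives} of arbitrarily small norm), but it says nothing about the filling norm of a fixed boundary; the averaging you sketch cannot beat the trivial inequality above.

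The paper does not attempt to make the correction chain small. It takes an \emph{arbitrary} filling $d\in C_k(Z_2)$ of $z$ (which exists because $[z]=0$ in $H_{k-1}(Z_2)$), sets $c_3:=c_1+c_2-d$, and then applies Proposition~\ref{prop_amenable_subset} to the whole cycle $c_3$. That proposition produces a finite approximation $c$ with $|c|_1\le |c_3|_1^{ne(Z)}+\varepsilon$. Since every simplex of $d$ lies in $Z$ and therefore belongs to $e(Z)$, the correction $d$ contributes nothing to $|c_3|_1^{ne(Z)}$, and one gets $|c|_1\le |c_1|_1+|c_2|_1+\varepsilon$ regardless of how large $|d|_1$ is. In short, the amenability trick is applied \emph{after} gluing, to the glued cycle, not before gluing to the filling; this is what the multicomplex/diffusion machinery of the later subsections is set up to do. Your finite-approximation bookkeeping is fine in spirit but becomes simpler in the paper's version, since $d$ is supported entirely in $Z\subset Y_1\cap Y_2$ and hence vanishes in both relative complexes.
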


\begin{remark}\label{remark_well_defined_homology_restrictions}
In the above proposition we compare $c'_1\in C^{\lf}_*(X_1\cup_f X_2, Y_1\cup X_2)$ and $c'_2\in C^{\lf}_*(X_1\cup_f X_2, X_1\cup Y_2)$ with $c_1\in C^{\lf}_k(X_1, Y_1)$ and $c_2\in C^{\lf}_k(X_2, Y_2)$, which may cause some confusion. However, note that there are obvious embeddings
\[
C^{\lf}_k(X_1, Y_1)\hookrightarrow C^{\lf}_*(X_1\cup_f X_2, Y_1\cup X_2)
\]
and
\[
C^{\lf}_k(X_2, Y_2)\hookrightarrow C^{\lf}_*(X_1\cup_f X_2, X_1\cup Y_2)
\]
and the notion of being finite modification/approximation is in this case independent of the space we are considering, since $Z_2$ is compact.
\end{remark}

The above proposition follows easily from the following fact.

\begin{prop}\label{prop_amenable_subset}
Let $(X,Y)$ be a pair of topological spaces and let $Z\subset X$ be a path-connected, relatively compact set such that $\im(\pi_1(Z)\rightarrow \pi_1(X))$ is amenable. Let $c\in C^{\lf}_k(X,Y)$ be a locally finite cycle. For every $\varepsilon>0$ there exists a finite approximation $c'$ of $c$ such that
\[
|c'|_1\leq |c|^{ne(Z)}_1 + \varepsilon.
\]
In particular,
\[
\|[c]\|_1 \leq |c|^{ne(Z)}_1.
\]
\end{prop}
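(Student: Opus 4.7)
The strategy is a Gromov-style F\o{}lner averaging, carried out directly on cycles rather than dualised through bounded cohomology. The amenability of $G := \im(\pi_1(Z) \to \pi_1(X))$ will let us replace, up to arbitrarily small $\ell^1$-error, the contribution of every simplex of $c$ with an edge in $Z$ by a chain supported away from $Z$.

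\emph{Step 1 (localisation to a finite bad piece).} Since $\bar{Z}$ is compact and $c$ is locally finite, the set $E := e(Z) \cap (\Sigma_{k+1} \cdot \supp(c))$ of simplices of $c$ having at least one edge in $Z$ is finite. Write $c = c_0 + c_1$ with $c_1 = \sum_{\sigma \in E} c(\sigma) \sigma$; then $|c_0|_1 = |c|_1^{ne(Z)}$. It suffices to build a finite chain $d \in C_{k+1}^{\lf}(X)$ such that $c' := c + \partial d$ is a finite modification of $c$ in $C^{\lf}_k(X,Y)$ with $|c'|_1 \le |c_0|_1 + \varepsilon$; such a $c'$ is automatically a finite approximation of $c$.

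\emph{Step 2 (F\o{}lner filling).} Lift $c_1$ to the universal cover $\wt{X} \to X$ so that the $Z$-edges of each simplex land inside a single fixed lift $\wt{Z}$ of $Z$; the setwise stabiliser of $\wt{Z}$ in the deck group is precisely $G$. Let $T \subset G$ be the finite set of deck elements realised by $Z$-edges of simplices in $c_1$ (together with composites along shared faces). Amenability of $G$ yields a finite F\o{}lner set $F \subset G$ with $|tF \triangle F| < \varepsilon' |F|$ for every $t \in T$, where $\varepsilon' > 0$ is chosen small relative to the fixed combinatorics of the finite chain $c_1$. Build inductively on skeleta a system of compatible chain homotopies (in the spirit of Lemma~\ref{lemma_basic_chain_homotopy}) carrying each lifted simplex $\wt{\sigma}$ of $c_1$ to its F\o{}lner average $\frac{1}{|F|} \sum_{g \in F} g \cdot \wt{\sigma}$; project the resulting $(k+1)$-chain to $X$ to obtain the filler $d$. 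The F\o{}lner estimate bounds $|c_1 + \partial d|_1$ by a quantity proportional to $\max_{t \in T} |tF \triangle F|/|F|$, which can be made $\le \varepsilon$, while the new simplices that $\partial d$ contributes outside $E$ either cancel on shared faces of $c_1$ or land inside $Y$, hence do not contribute to the norm on $C^{\lf}_k(X,Y)$. Therefore
\[
|c'|_1 \;\le\; |c_0|_1 + |c_1 + \partial d|_1 \;\le\; |c|_1^{ne(Z)} + \varepsilon,
\]
and the seminorm inequality $\|[c]\|_1 \le |c|_1^{ne(Z)}$ follows by taking the infimum over $\varepsilon > 0$.

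\emph{Main obstacle.} The delicate point is the F\o{}lner filling of Step~2, performed at chain level with compatibility across shared faces of the finitely many simplices in $c_1$, and with its $\ell^1$-norm controlled after descent from $\wt{X}$ to $X$. The standard amenability arguments in this setting are phrased in bounded cohomology via an invariant mean on $G$; here we need the dualised chain-level construction, which requires a careful inductive assembly of simultaneous chain homotopies so that common faces agree before projection. The compactness of $\bar{Z}$ is essential, since it keeps $T$ finite and thereby allows a single F\o{}lner set $F$ to work uniformly for the whole of $c_1$.
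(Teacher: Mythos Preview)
Your Step~2 contains a genuine gap: F\o{}lner averaging by deck translations does not, by itself, shrink the $\ell^1$-norm of $c_1$. If $g\in G$ is a deck transformation and $\wt\sigma$ is a lift of $\sigma$, then $\pi_*(g\cdot\wt\sigma)=\sigma$, so the projected average $\pi_*\bigl(\tfrac{1}{|F|}\sum_{g\in F}g\cdot\wt{c_1}\bigr)$ is just $c_1$ again, and your filler $d$ satisfies $\partial d=0$ up to prism terms coming from $\partial\wt{c_1}$; there is no reason these terms are small. Even if you work entirely upstairs, $\bigl|\tfrac{1}{|F|}\sum_{g}g\cdot\wt{c_1}\bigr|_1\le|\wt{c_1}|_1$ with equality in the generic case of no cancellation among the translates; the F\o{}lner condition controls $\|A-tA\|_1$, not $\|A\|_1$. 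In the language of Gromov diffusion (Proposition~\ref{prop_Gromov_diffusion}), convolution by a F\o{}lner measure $\mu$ can only push $\|\mu*f\|_1$ down to $|\sum_a f(a)|$ on each orbit, and you have done nothing to make those orbit-sums vanish.

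The paper supplies exactly the missing cancellation mechanism, and it requires two ingredients absent from your sketch. First, the chain is antisymmetrised (Proposition~\ref{prop_antisymmetrisation}), so that $c(s\cdot\sigma)=-c(\sigma)$ for any transposition $s$ swapping two vertices joined by a $Z$-edge. Second, the averaging group is not $G$ acting by deck translations but the much larger amenable group $\Gamma_k(X,Z)$ built from the path groupoid $\Pi(X,Z)$ (Sections~\ref{subsection_automorphisms}--\ref{subsection_PI}); crucially this group can move $\sigma$ to $s\cdot\sigma$ within a single orbit (Lemma~\ref{lemma_properties_Gamma}(2)). Together these force the coefficient sum over each orbit in $e(Z)$ to vanish, after which diffusion genuinely reduces $|c'|_1^{e(Z)}$ below $\varepsilon$. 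There is also a preliminary step you have not addressed: one must first arrange that simplices are $Z$-non-degenerate (distinct $Z$-vertices), which the paper handles with a local barycentric subdivision (Lemma~\ref{lemma_local_barycentric}, Proposition~\ref{prop_local_barycentric_subdivision}) designed not to increase $|c|_1^{ne(Z)}$.
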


\begin{proof}[Proof of Proposition \ref{prop_subadditivity}]
Let $\varepsilon>0$. By assumption, there exists a chain $d\in C_k(Z_2)$ such that
\[
\partial d = f_*((\partial c_1)|_{Z_1}) + (\partial c_2)|_{Z_2}.
\]
Consider the chain $c_3 := c_1+c_2 -d$. We compute that it is a cycle in $C_*^{\lf}(X_1\cup_f X_2, Y_1\cup Y_2)$.
\[
\partial c_3 = \partial c_1 +\partial c_2 - (\partial c_1)|_{Z_1} - (\partial c_2)|_{Z_2} = (\partial c_1)|_{Y_1\setminus Z_1} - (\partial c_2)|_{Y_2\setminus Z_2}\in C_*^{\lf}(Y_1\cup Y_2).
\]
Moreover, it is obvious that its restrictions to $C^{\lf}_*(X_1\cup_f X_2, Y_1\cup X_2)$ and $C^{\lf}_*(X_1\cup_f X_2, X_1\cup Y_2)$  are finite approximations of $c_1$ and $c_2$ respectively. Now it suffices to apply Proposition \ref{prop_amenable_subset} to $c_3$, $\varepsilon$ and $Z_2\subset X_1\cup_f X_2$ to obtain a cycle $c$ satisfying the required conditions.
\end{proof}

The proof of Proposition \ref{prop_amenable_subset} can be divided into two steps, which are represented by the following lemmas.

\begin{defi}
Let $(X,Y)$ be a pair of topological spaces, let $Z\subset X$ and let $\sigma\in C(\Delta^k,X)$ for $k\in\mathbb{N}$ be a singular simplex. We say it is \emph{$Z$-non-degenerated} if all the vertices of $\sigma$ which are contained in $Z$ are distinct. We say that a singular chain $c\in C_*^{\lf}(X,Y)$ is $Z$-non-degenerated if every $\sigma\in\supp(c)$ is $Z$-non-degenerated.
\end{defi}

\begin{lemma}\label{lemma_amenable_subset_distinct_vertices}
Let $(X,Y)$ be a pair of topological spaces and let $Z\subset X$ be a path-connected, relatively compact set such that $\im(\pi_1(Z)\rightarrow \pi_1(X))$ is amenable. Let $c\in C_k^{\lf}(X,Y)$ be a $Z$-non-degenerated cycle. Then for every $\varepsilon>0$ there exists a finite approximation $c'$ of $c$ such that
\[
|c'|_1\leq |c|^{ne(Z)}_1 + \varepsilon.
\]
In particular,
\[
\|[c]\|_1 \leq |c|^{ne(Z)}_1.
\]
\end{lemma}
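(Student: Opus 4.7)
The plan is to absorb the finite $e(Z)$-part of $c$ by a chain homotopy built from an amenable-averaging construction with $G := \im(\pi_1(Z,z_0)\to\pi_1(X,z_0))$. First, because $\bar Z$ is compact and $c$ is locally finite, the set $S := \supp(c)\cap e(Z)$ is finite; writing $c=c_{ne}+c_e$ with $c_e=\sum_{\sigma\in S}c(\sigma)\sigma$, one has $|c_{ne}|_1=|c|^{ne(Z)}_1$, so it suffices to produce a compactly supported $d\in C_{k+1}^{\lf}(X,Y)$ with $|c_e-\partial d|_1\le\varepsilon$ and then take $c':=c-\partial d$. The modification will automatically be finite because all changes occur in a fixed compact neighborhood of $\bar Z$.

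Fix a basepoint $z_0\in Z$ and, for every vertex $v\in Z$ appearing in some $\sigma\in S$, a path $\alpha_v$ in $Z$ from $z_0$ to $v$; for each $g\in G$, pick a loop $\gamma_g$ in $Z$ at $z_0$ representing $g$. Given $\sigma\in S$ with $Z$-vertex set $V_\sigma$ and a tuple $\underline g=(g_v)_{v\in V_\sigma}\in G^{V_\sigma}$, define a twisted simplex $\sigma^{\underline g}:\Delta^k\to X$ which agrees with $\sigma$ outside small disjoint neighborhoods of the vertices in $V_\sigma$ and, near each such $v$, is precomposed with a local self-homotopy of $\Delta^k$ realizing the based loop $\alpha_v\cdot\gamma_{g_v}\cdot\alpha_v^{-1}$. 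The $Z$-non-degeneracy of $c$ is essential here: the $Z$-vertices of $\sigma$ are pairwise distinct, hence distinct tuples $\underline g$ give distinct twisted simplices, and whenever two simplices of $S$ share a $Z$-vertex $v$ they receive the same twist $g_v$, so the constructions glue compatibly along shared faces.

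Let $(F_n)_{n\in\mathbb N}$ be a Folner sequence in $G$ and $H^{\underline g}(\sigma)\in C_{k+1}(X)$ a natural $(k+1)$-chain realizing the homotopy from $\sigma$ to $\sigma^{\underline g}$, supported in a fixed compact neighborhood of $\im(\sigma)\cup\bar Z$. Define
\begin{equation*}
d\;:=\;\sum_{\sigma\in S} c(\sigma)\cdot |F_n|^{-|V_\sigma|}\sum_{\underline g\in F_n^{V_\sigma}}H^{\underline g}(\sigma).
\end{equation*}
Then $c-\partial d$ is a finite modification of $c$, and the $e(Z)$-part of the new chain is built from the averaged side-faces of the $H^{\underline g}(\sigma)$. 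Using shared-vertex compatibility between neighboring simplices of $S$ together with the Folner property $|gF_n\triangle F_n|/|F_n|\to 0$, these side-face contributions cancel pairwise up to an $\ell^1$-error of order $|\partial F_n|/|F_n|$; choosing $n$ large enough makes the residual at most $\varepsilon$, yielding $|c'|_1\le|c|^{ne(Z)}_1+\varepsilon$. The hard part is precisely this last cancellation: making it rigorous is Gromov's amenable-multicomplex trick, whose clean formulation is the raison d'être of the multicomplex machinery the paper develops in the subsequent sections, where vertex labels make the $G$-action on $Z$-vertices free (by $Z$-non-degeneracy) and the Folner averaging reduces to the standard averaging of a free amenable action.
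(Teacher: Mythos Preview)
Your opening reduction already fails. If there were a compactly supported $d$ with $|c_e-\partial d|_1\le\varepsilon$, then $|\partial c_e|_1=|\partial(c_e-\partial d)|_1\le(k+1)\,\varepsilon$ for every $\varepsilon>0$, forcing $\partial c_e=0$. But $\partial c_e$ is generically nonzero (even modulo $Y$): any face shared between a simplex of $S$ and one of $\supp(c)\setminus S$ survives in $\partial c_e$ and need not lie in $Y$. So $c_e$ cannot be approximated by boundaries, and the decomposition $c'=c_{ne}+(c_e-\partial d)$ is a dead end.

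Even setting this aside, your averaging does not reduce norm. You note yourself that distinct tuples $\underline g$ give distinct simplices $\sigma^{\underline g}$, so $|F_n|^{-|V_\sigma|}\sum_{\underline g}\sigma^{\underline g}$ still has $\ell^1$-norm exactly $1$. The F{\o}lner cancellation you invoke concerns only the side faces of the prisms $H^{\underline g}(\sigma)$; it says nothing about the top faces $\sigma^{\underline g}$, and there is no reason for twisted simplices arising from different $\sigma\in S$ to collide.

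The paper's argument is organised differently and supplies the missing cancellation mechanism. One first antisymmetrises $c$ (Proposition~\ref{prop_antisymmetrisation}) and pushes it into the multicomplex $K_*(X)$ (Lemma~\ref{lemma_Z-admissible}), obtaining a $Z$-antisymmetric, $Z$-locally admissible approximation. One then acts not merely by $G^{V_\sigma}$ but by the amenable group $\Gamma_k(X,Z)$ of Section~\ref{subsection_PI}, which crucially contains elements realising the transpositions of $Z$-vertices joined by an edge in $Z$ (Lemma~\ref{lemma_properties_Gamma}). Antisymmetry then forces the coefficient sum over each $\Gamma_k(X,Z)$-orbit inside $e(Z)$ to vanish, after which Gromov's diffusion (Corollary~\ref{corollary_Gromov_diffusion}) yields a finitely supported probability measure $\mu$ with $|\mu*c'|_1^{e(Z)}\le\varepsilon$. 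Because $\Gamma_k(X,Z)$ acts by simplicial automorphisms homotopic to the identity, $\mu*c'$ is automatically a finite approximation of $c$; no explicit chain $d$ is built, and the obstruction $\partial c_e\neq 0$ never arises. Your closing paragraph gestures toward this machinery but your concrete construction is not a sketch of it --- the antisymmetrisation step and the vertex-permuting part of the acting group are precisely what make the cancellation go through, and both are absent from your setup.
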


The proof of Lemma \ref{lemma_amenable_subset_distinct_vertices} involves multicomplexes machinery and the diffusion of chains, techniques used by Gromov in \cite[Section 4.2]{G}. However, the version proved by Gromov was a little bit less general than the above one, therefore we need to complete the proof, which we do in Section \ref{subsection_proof_of_subadd} after preparations made in Sections \ref{subsection_delta_sets}, \ref{subsection_automorphisms} and \ref{subsection_PI}.
 
The second step of the proof of Proposition \ref{prop_amenable_subset} is the following lemma, which is proved using local barycentric subdivision, described in Section \ref{subsecton_local_barycentric}.

\begin{lemma}\label{lemma_local_barycentric}
Let $(X,Y)$ be a pair of topological spaces, let $Z\subset X$ and let $c\in C_k^{\lf}(X,Y)$ be a cycle. Then there is a $Z$-non-degenerated finite approximation $c'$ of $c$ such that
\[
|c'|^{ne(Z)}_1 \leq |c|^{ne(Z)}_1.
\]
\end{lemma}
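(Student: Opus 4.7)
The plan is to apply, to each $Z$-degenerated simplex of $\supp(c)$, a chain-homotopic replacement by a $Z$-non-degenerated chain, and to assemble these replacements via Lemma \ref{lemma_local_chain_homotopy} using the local barycentric subdivision developed in Section \ref{subsecton_local_barycentric}.

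First I reduce to a genuinely finite modification. In the intended applications (Proposition \ref{prop_amenable_subset}) the set $Z$ is relatively compact, so local finiteness of $c$ forces only finitely many simplices of $\supp(c)$ to have a vertex in $\overline{Z}$, and in particular only finitely many are $Z$-degenerated. Taking $Y$ to be the compact union of their images sets up the hypotheses of Lemma \ref{lemma_local_chain_homotopy}.

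Next I construct a face-compatible system of homotopies $\{H^\tau_l\}_{l\leq k,\,\tau\in C(\Delta^l,X)}$ realizing the local barycentric subdivision. On a $Z$-non-degenerated $\tau$ the homotopy is constant. On a $Z$-degenerated $\tau$ with coinciding vertices $\tau(e_i) = \tau(e_j) = v \in Z$, the homotopy subdivides the degenerate edge $[e_i,e_j]$ at an interior point $m$ (chosen from the geometry of the edge image), splits $\tau$ into two subsimplices (replacing $e_j$, resp.\ $e_i$, by $m$), and iterates on any remaining degeneracies. Because the condition for subdivision — which pairs of vertex images coincide in $Z$ — depends only on vertex images, it is preserved by the face maps $\partial_i$, so the subdivision choices on $\tau$ induce compatible choices on its faces; face-compatibility $H^\tau_l|_{\partial_i\Delta^l\times I} = H^{\partial_i\tau}_{l-1}$ is then automatic. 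Applying Lemma \ref{lemma_local_chain_homotopy} with this system and the compact $Y$ above yields a cycle $c'$ homologous to $c$ which agrees with $c$ outside the finite set of degenerate simplices and replaces each of them with $H^\tau_k(\cdot,1)$, a $Z$-non-degenerated chain; hence $c'$ is a $Z$-non-degenerated finite approximation of $c$.

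The main obstacle is the estimate $|c'|^{ne(Z)}_1 \leq |c|^{ne(Z)}_1$, which must be built into the very design of the local barycentric subdivision of Section \ref{subsecton_local_barycentric}. For $\tau\in\supp(c)\cap ne(Z)$ that is $Z$-degenerated, the degenerate edge maps to a loop at $v\in Z$ escaping $Z$, and the subdivision point $m$ should be chosen on the edge image so that one of the two halves of the loop is contained in $Z$ (for instance, as the last point of the edge lying in $Z$); then one of the resulting subsimplices carries an edge in $Z$ and lies in $e(Z)$, contributing $0$ to $|\cdot|^{ne(Z)}_1$, while the other is $Z$-non-degenerated and carries at most the same $ne(Z)$-weight as $\tau$ had. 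The analogous accounting for subdivisions of $e(Z)$-simplices — where subdivision can in principle spawn $ne(Z)$ pieces — is handled by choosing the subdivision vertex so that every produced subsimplex inherits an edge in $Z$. Summing these contributions over the finite set of degenerate simplices gives the required bound. The hard part is executing all of these choices face-compatibly across dimensions and handling the edge cases (most notably when the degenerate edge touches $Z$ only at its endpoints); this is the combinatorial core of Section \ref{subsecton_local_barycentric}.
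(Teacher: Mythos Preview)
Your proposal has a structural error: Lemma \ref{lemma_local_chain_homotopy} does not do what you need. That lemma takes a face-compatible system of homotopies $H^\sigma_l:\Delta^l\times I\to X$ and produces a cycle $c'=\sum_\sigma c(\sigma)\,\tau_\sigma$ in which each $\tau_\sigma=H^\sigma_k(\cdot,1)$ is a \emph{single} singular simplex, not a chain. You cannot ``split $\tau$ into two subsimplices'' by this mechanism --- a homotopy of $\tau$ ends at one simplex, period. So the sentence ``replaces each of them with $H^\tau_k(\cdot,1)$, a $Z$-non-degenerated chain'' is incoherent; and if you instead tried to move the coinciding vertices apart by a homotopy, you would destroy face-compatibility with every other simplex sharing that vertex.

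The paper's route is different in kind. Lemma \ref{lemma_local_chain_homotopy} is used only as a preprocessing step, homotoping $c$ simplex-by-simplex so that it becomes $Z$-\emph{barycentrically} non-degenerated (the technical condition on barycenters and edges stated just before Proposition \ref{prop_local_barycentric_subdivision}). One then applies a genuine chain operator $S_Z$, the local barycentric subdivision of Proposition \ref{prop_local_barycentric_subdivision}, which sends each simplex to a \emph{chain} and is proved chain homotopic to the identity not via simplex homotopies but via the functorial acyclic-models argument of Lemma \ref{lemma_chain_homotopy_appendix}. The $ne(Z)$ bound then comes from the combinatorial count of Lemma \ref{lemma_counting_edges_main}: under the preprocessing hypothesis, each simplex in $ne(Z)$ contributes exactly one $ne(Z)$ simplex to its local subdivision, and each simplex in $e(Z)$ contributes none. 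The edge case you flag at the end --- a degenerate loop meeting $Z$ only at its basepoint --- is precisely what the $Z$-barycentric preprocessing is designed to eliminate; without it your ad hoc choice of subdivision point need not exist, and the inequality can fail.
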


\subsection{$\Delta$-sets and multicomplexes}\label{subsection_delta_sets}
In this section we introduce Gromov's machinery concerning multicomplexes, $\Delta$-sets and diffusion of chains. They are described e.g. in \cite{G, KKu}. However, in our approach we need to apply these techniques locally. Moreover, our space $X$ is not always aspherical. We introduce only the notions and facts that have some use for our purposes. More standard and complete approach can be found in the references given above.

\begin{defi}
A \emph{$\Delta$-set} $(S_*,V,\partial)$ with the set of vertices $V$ is the following data
\begin{itemize}
\item Family of sets $S_{v_0,...,v_k}$ for each $k\in\mathbb{N}$ and each ordered sequence $(v_0,...,v_k)\in V^{k+1}$. The set $S_{v_0,...,v_k}$ is called a set of simplices with vertices $v_0,...,v_k$.
\item Family of maps $\partial_i:S_{v_0,...,v_k}\rightarrow S_{v_0,...,\hat{v_i},...v_k}$ for each $S_{v_0,...,v_k}$ and $i=0,...,k$, such that
\[
\partial_i\partial_j = \partial_{j-1}\partial_i
\]
for $i< j$.
\end{itemize}
A $\Delta$-set is a \emph{multicomplex} if $S_{v_0,...,v_k}=\emptyset$ whenever $v_i=v_j$ for $i\neq j$.
\end{defi}

For a subset $W\subset V$, we will denote by $S_W$ the full sub-$\Delta$-set consisting of all simplices with vertices in $W$.

As for the simplicial set, we can define a \emph{geometric realisation} of $|S|$ as
\[
|S| := \bigcup_{k\in\mathbb{N}} \bigcup_{(v_0,...,v_k)\in V^k} S_{v_0,...,v_k}\times\Delta^k / \sim,
\]
where $\sim$ is an equivalence relation generated by
\[
(S_{v_0,...,v_k},\delta^i x)\sim (S_{v_0,...,\hat{v_i},...v_k},x)
\]
for $x\in \partial_i\Delta^k$. Note that if $W\subset V$, then $|S_W|$ is a subset of $|S_V|$. Note also that for any map $f: K\rightarrow L$ of $\Delta$-sets there is a canonically defined map $|f|: |K|\rightarrow |L|$. On the other hand, we call a map $g: |K|\rightarrow |L|$ \emph{simplicial} if it is a geometric realization of some map $K\rightarrow L$. 

A very important example of a $\Delta$-complex, which we will have in mind, is the complex of singular simplices in a topological space $X$, denoted as $S_*(X)$. The set of vertices of $S_*(X)$ is obviously $X$.

\begin{defi}
Let $f:|K|\rightarrow |L|$ be a map between two $\Delta$-sets. We say that $f$ is an \emph{simplicial immersion} if it is simplicial and injective on the interior of every simplex $\sigma\in K$.
\end{defi}

\begin{defi}
Let $S$ be a $\Delta$-set with the vertex set $V$ and let $W\subset V$. It is \emph{$W$-non-degenerated} if $S_W$ is a multicomplex. A $W$-non-degenerated $\Delta$-set $S$ is
\begin{itemize}
\item \emph{complete} if every $|S_W|$-non-degenerated map $f:\Delta^{k}\rightarrow |S|$ such that $f|_{\partial\Delta^k}$ is a simplicial immersion is homotopic to a simplicial immersion relative to $\partial\Delta^k$;
\item \emph{$W$-locally minimal} if every $|S_W|$-non-degenerated singular simplex $f:\Delta^{k}\rightarrow |S|$ with some vertices in $W$ such that $f|_{\partial\Delta^k}$ is a simplicial immersion is homotopic to at most one simplicial immersion relative to $\partial\Delta^k$;
\end{itemize}
\end{defi}

Note that every complete $W$-non-degenerated $\Delta$-set $S$ contains a complete $W$-locally minimal $W$-non-degenerated $\Delta$-set. One can construct it simply by choosing inductively for each $k\in\mathbb{N}$ and each $\sigma\in S_{v_0,...,v_k}$ one element in its homotopy class relative to its boundary, whenever $\{v_0,...,v_k\}\cap W\neq\emptyset$.

For the following two lemmas we will need one more denotation. Let $\sigma\in C(\Delta^k, X)$ be a singular simplex and let $Z\subset X$. We denote by $\Sigma^Z_{\sigma}\subset \Sigma_{k+1}$ the set of those permutations which permute only the vertices of $\sigma$ which are contained in $Z$.

\begin{lemma}
There exists a $\Delta$-set $K_*(X)\subset S_*(X)$ with vertices $X$ such that
\begin{enumerate}
\item $K_*(X)$ is $Z$-locally minimal, complete $Z$-non-degenerated $\Delta$-set;
\item if $\sigma\in K_*(X)$ is homotopic relative to its boundary to a simplex in $Z$, then $\sigma\subset Z$;
\item for every $\sigma\in K_k(X)$ and every $s\in\Sigma^Z_{\sigma}$ we have $s\cdot \sigma\in K_k(X)$.
\end{enumerate}
\end{lemma}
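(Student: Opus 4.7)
The plan is to construct $K_*(X)$ by induction on dimension. Set $K_0(X) := X$, which trivially satisfies all three conditions at the base level. Suppose $K_{<k}(X)$ has been constructed. Let
\[
T_k := \{\,\sigma \in S_k(X) : \partial_i \sigma \in K_{k-1}(X) \text{ for all } 0 \le i \le k\,\}
\]
be the candidate set of $k$-simplices extending $K_{<k}(X)$ to a $\Delta$-set. I would split $T_k$ into three classes by the $Z$-vertex data: (i) $\sigma$ has no vertex in $Z$; (ii) $\sigma$ has at least one vertex in $Z$ and all of its $Z$-vertices are distinct; (iii) some $Z$-vertex of $\sigma$ is repeated. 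Class (iii) is excluded from $K_k(X)$, enforcing $Z$-non-degeneracy. Class (i) is included in its entirety; the local-minimality clause of condition 1 imposes no restriction on simplices without $Z$-vertices, while including everything in class (i) helps to satisfy completeness for homotopy classes whose vertices avoid $Z$.

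The substantive case is class (ii). Here the group $\Sigma^Z$ — by which I mean, for each vertex tuple, the subgroup of $\Sigma_{k+1}$ permuting only the $Z$-indexed coordinates — acts on class (ii) and on the set of triples (ordered vertex tuple, admissible boundary datum in $K_{k-1}(X)$, homotopy class rel that boundary). I will partition these triples into $\Sigma^Z$-orbits and then, by the axiom of choice, pick one simplex representative per orbit, subject to the following preference rule dictated by condition 2: whenever the chosen homotopy class contains some $\tau$ with $\tau \subset Z$, the selected representative is taken with image in $Z$. The entire $\Sigma^Z$-orbit of each selected simplex is then placed into $K_k(X)$, yielding $\Sigma^Z$-invariance and exactly one simplex per pair (boundary datum, homotopy class rel that boundary) within class (ii).

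I expect verification of the three conditions to be then routine. $Z$-non-degeneracy holds by exclusion of class (iii). $Z$-local minimality follows because the orbit-based selection picks exactly one simplex per homotopy class rel each boundary in class (ii). Completeness: any $|S_Z|$-non-degenerate map $\Delta^k \to |K_*(X)|$ whose boundary is a simplicial immersion comes from a simplex $\sigma \in T_k$ of class (i) or (ii), which by construction is either in $K_k(X)$ or homotopic rel $\partial$ to some $\sigma' \in K_k(X)$. Condition 3 is built into the orbit construction. For condition 2, if $\sigma \in K_k(X)$ lies in a homotopy class containing some $\tau \subset Z$, then by the preference rule the chosen representative of that orbit had image in $Z$, and since the image of a simplex is $\Sigma^Z$-invariant, $\sigma$ itself has image in $Z$.

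The main obstacle I foresee is organizing the many simultaneous choices in class (ii) so as to be simultaneously $\Sigma^Z$-equivariant, one-per-homotopy-class, and consistent with the condition 2 preference rule. The orbit-based formulation above reconciles these demands because the property ``$\sigma \subset Z$'' is itself $\Sigma^Z$-invariant, so the preference rule descends to the level of orbits. A secondary technical subtlety is the existence of simplicial immersion representatives within each relevant homotopy class — a point where Gromov's theory requires some care, but one that can be handled by choosing representatives in general position with respect to the $\Delta$-set structure.
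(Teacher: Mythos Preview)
Your proposal is correct and follows essentially the same inductive strategy as the paper's proof: start from $K_0(X)=X$, and at each stage select one simplex per homotopy class rel boundary among $Z$-non-degenerated simplices with boundary already in $K_{k-1}(X)$, making the selections $\Sigma^Z$-equivariant and preferring representatives lying in $Z$ when possible. The paper phrases this more tersely, simply noting that since a non-trivial $s\in\Sigma^Z_\sigma$ sends a $Z$-non-degenerated $\sigma$ to a simplex with different boundary data, the one-per-homotopy-class choice can be made $\Sigma^Z$-compatibly; your orbit-based selection is just an explicit way of organizing that same observation. The only harmless deviation is that you keep all of class~(i) while the paper also thins class~(i) to one representative per homotopy class---both are fine, since $Z$-local minimality imposes nothing on simplices without $Z$-vertices. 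Your closing worry about ``existence of simplicial immersion representatives'' is not really an issue here: for $K_*(X)\subset S_*(X)$ completeness amounts exactly to the statement that every homotopy class rel boundary with boundary in $K_{k-1}(X)$ has a representative in $K_k(X)$, which your construction provides by design.
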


\begin{proof}
Let $L_*(X)\subset S_*(X)$ be the $\Delta$-set containing all the simplices $\sigma\in S_*(X)$ such that all vertices of $\sigma$ contained in $Z$ are distinct. Then $L_*(X)$ is $W$-non-degenerated and complete. Therefore one can define $K_k(X)$ inductively on $k$ by setting $K_0(X) = X$ and choosing exactly one simplex from every homotopy class (relative to their boundaries) of simplices in $L_k(X)$ such that their boundaries lies in $K_{k-1}(X)$. Moreover, we can choose only the simplices satisfying the rest of the indicated conditions, using the fact that if $\sigma$ is $Z$-non-degenerated and $s\in\Sigma^Z_{k+1}$ is non-trivial then $s\cdot\sigma$ cannot be homotopic relative to its boundary to $\sigma$.
\end{proof}

\begin{defi}
We say that a locally finite singular chain $c$ is \emph{$Z$-locally $K_*(X)$-admissible} if for every $\sigma\in supp(c)$ such that $\sigma\cap Z\neq\emptyset$ we have $\sigma\in K_*(X)$.
\end{defi}

\begin{defi}
We say that a chain $c\in C_k^{\lf}(X,Y)$ for $k\in\mathbb{N}$ is \emph{$Z$-antisymmetric} if for every $\sigma\in\supp(c)$ and $s\in\Sigma^Z_{\sigma}$ we have
\[
c(s\cdot\sigma) = (-1)^{|s|}c(\sigma).
\]
\end{defi}

\begin{lemma}\label{lemma_Z-admissible}
Let $Z\subset X$ be relatively compact and let $c\in C_*^{\lf}(X,Y)$ be a $Z$-non-degenerated locally finite singular cycle. Then there is a $Z$-locally $K_*(X)$-admissible finite approximation $c'$ of $c$ such that $|c'|_1\leq |c|_1$ and $|c'|^{ne(Z)}_1\leq |c|_1^{ne(Z)}$. Moreover, if $c$ is $Z$-antisymmetric then we can choose $c'$ to also be $Z$-antisymmetric.
\end{lemma}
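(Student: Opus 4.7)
My plan is to apply Lemma \ref{lemma_local_chain_homotopy} to a carefully constructed system of face-compatible homotopies $\{H^{\sigma}_k\}$ that steers each $Z$-non-degenerated simplex to its chosen $K_*(X)$-representative at time $t=1$. Since $c$ is locally finite and $\overline{Z}$ is compact, only finitely many simplices of $\supp(c)$ meet $\overline{Z}$, and by local finiteness together with separation there is an open neighborhood $U$ of $\overline{Z}$ whose closure meets only those simplices of $\supp(c)$ which already meet $\overline{Z}$; taking $Y=\overline Z$ and choosing the cut-off function in Lemma \ref{lemma_local_chain_homotopy} to be supported in $U$ guarantees that every $\sigma\in\supp(c)$ satisfies either $\tau_\sigma=\sigma$ (if $\sigma\cap\overline Z=\emptyset$) or $\tau_\sigma=H^{\sigma}_k(\cdot,1)$ (if $\sigma\cap\overline Z\ne\emptyset$), so the output $c'$ is automatically a finite modification of $c$.

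I would construct $\{H^{\sigma}_k\}$ by induction on $k$, maintaining the auxiliary invariant that $H^{\tau}_l(\cdot,t)$ is constant for $t\in[t_l,1]$, where $t_0=0$ and $t_l=1-2^{-l}$ for $l\ge 1$. Set $H^{\sigma}_0$ to be the constant homotopy. For $k\ge 1$, assume $H^{\tau}_l$ has been constructed for $l<k$. For each $\sigma\in C(\Delta^k,X)$, the data $H^{\partial_i\sigma}_{k-1}$ together with $\sigma$ specifies $H^{\sigma}_k$ on $\partial\Delta^k\times I\cup\Delta^k\times\{0\}$; extend to $\Delta^k\times[0,t_{k-1}]$ by HEP, producing at $t=t_{k-1}$ a simplex $\tilde\sigma$ whose boundary lies in $K_{k-1}(X)$ by induction. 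When $\sigma$ is $Z$-non-degenerated, $\tilde\sigma$ is as well (vertex set unchanged), so by completeness of $L_*(X)$ it is homotopic rel boundary to the chosen $K_k(X)$-representative $\sigma'$ of its homotopy class; extend $H^{\sigma}_k$ on $\Delta^k\times[t_{k-1},t_k]$ by this rel-boundary homotopy and take $H^{\sigma}_k$ constant equal to $\sigma'$ on $\Delta^k\times[t_k,1]$. Face compatibility on $[t_{k-1},1]$ is automatic because the face homotopies have stabilized by time $t_{k-1}$ and the new interior homotopy is rel boundary; the invariant for $k$ is preserved. For $Z$-degenerate $\sigma$ one simply extends by HEP and stays constant on $[t_k,1]$ — no such simplex appears in $\supp(c)$.

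Feeding the system into Lemma \ref{lemma_local_chain_homotopy} yields a cycle $c'$ homologous to $c$ with $|c'|_1\le|c|_1$, agreeing with $c$ on simplices missing $\overline Z$ and replacing each $\sigma\in\supp(c)$ meeting $\overline Z$ by $\sigma'=H^{\sigma}_k(\cdot,1)\in K_*(X)$; hence $c'$ is $Z$-locally $K_*(X)$-admissible and a finite approximation of $c$. For the refinement $|c'|_1^{ne(Z)}\le|c|_1^{ne(Z)}$, observe that any edge $\tau\subset Z$ is homotopic rel endpoints to a simplex in $Z$ (namely itself), so by property (2) in the construction of $K_*(X)$ its $K_1$-representative also lies in $Z$; iterated face compatibility then shows that the corresponding edge of $\sigma'=\tau_\sigma$ still lies in $Z$, so $\sigma\in e(Z)$ forces $\tau_\sigma\in e(Z)$. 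The contrapositive $\tau_\sigma\in ne(Z)\Rightarrow\sigma\in ne(Z)$ together with the triangle inequality gives the desired bound. For the $Z$-antisymmetric refinement, I would make the choices of $K_*(X)$-representatives and rel-boundary homotopies $\Sigma^Z$-equivariantly: for each $\Sigma^Z$-orbit of homotopy classes of $Z$-non-degenerated simplices, pick one representative and propagate across the orbit, and define $H^{s\cdot\sigma}_k=s\cdot H^{\sigma}_k$; then antisymmetry of $c$ transfers directly to $c'$.

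The main technical obstacle I anticipate is synchronizing the two-stage construction (HEP extension followed by rel-boundary correction into $K_*(X)$) with face compatibility across all dimensions simultaneously; without a stabilization invariant, the rel-boundary correction on a $k$-simplex would conflict with the already-specified non-constant boundary homotopy inherited from faces. The stabilization sequence $t_l\uparrow 1$ is precisely the bookkeeping that resolves this conflict. A secondary point is arranging the $\Sigma^Z$-equivariance of the $K_*(X)$-choices coherently with completeness, which amounts to selecting one orbit representative per $\Sigma^Z$-orbit of homotopy classes before running the induction.
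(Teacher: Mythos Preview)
Your proof is correct and follows essentially the same route as the paper's: build a face-compatible system of homotopies by induction using completeness of $K_*(X)$, apply Lemma~\ref{lemma_local_chain_homotopy}, and use property~(2) of $K_*(X)$ together with face compatibility to keep $e(Z)$ stable, with $\Sigma^Z$-equivariant choices for the antisymmetric refinement. Your stabilization sequence $t_l=1-2^{-l}$ and the careful choice of the cut-off support are welcome technical clarifications that the paper leaves implicit, but they do not change the underlying argument.
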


\begin{proof}
We will construct by induction on $k$ a family of homotopies $H^{\sigma}_k:\Delta^k\times I\rightarrow \Delta^k$ for $\sigma\in C(\Delta^k, X)$ and $k\in\mathbb{N}$ satisfying the conditions of Lemma \ref{lemma_local_chain_homotopy} such that $H^{\sigma}_0$ are constant homotopies and for every $k\in\mathbb{N}$ and $Z$-non-degenerated $\sigma\in C(\Delta^k, X)$ we have $H^{\sigma}_k(\cdot, 1)\in K_k(X)$. $H^{\sigma}_0$ are defined, assume that $H^{\sigma}_k$ are defined for $\sigma\in C(\Delta^k, X)$ and $k\leq N$. Using the completeness of $K_*(X)$, one can define $H^{\sigma}_{N+1}$ for $Z$-non-degenerated $\sigma$ as homotopy extensions of $H^{\sigma}_{N+1}|_{\partial\Delta^N}$ such that $H^{\sigma}_N(\cdot, 1)\in K_N(X)$ and for other $\sigma$ as any homotopy extensions of $H^{\sigma}_{N+1}|_{\partial\Delta^N}$. Using this system of homotopies we apply Lemma \ref{lemma_local_chain_homotopy} to $c$ and $\cup_{\sigma\in\supp(c)\::\: \sigma\cap Z\neq\emptyset}\im(\sigma)$ and obtain a cycle $c'$ with desired properties. Because we can also assume that in the above procedure simplices in $e(Z)$ stay in $e(Z)$, we have $|c'|^{ne(Z)}_1\leq |c|_1^{ne(Z)}$.

Moreover, we can choose $H^{\sigma}_k$ for $Z$-non-degenerated $\sigma\in C(\Delta^k,X)$ and $k\in\mathbb{N}$ such that they are $\Sigma^Z_{\sigma}$-equivariant, in the sense that for every $s\in\Sigma^Z_{\sigma}$ we have
\[
H^{s\cdot\sigma}_k(x,t) = H^{\sigma}_k(s\cdot x,t).
\]
Note that because $\sigma$ is $Z$-non-degenerated, $s\cdot\sigma\neq\sigma$, hence the above definition of $H^{s\cdot\sigma}_k(x,t)$ is correct.
\end{proof}

\subsection{Automorphisms of $\Delta$-sets}\label{subsection_automorphisms}
The main reason why we use $\Delta$-sets and multicomplexes is because their automorphisms can be easily constructed and behave in a nice way. The following lemmas are simple generalizations of the corresponding lemmas about multicomplexes from \cite{G}.

\begin{lemma}\label{lemma_multicomplex_automorphism}
Let $W\subset V$ and $W'\subset V'$. Assume that $(S,V,\partial)$ is $W$-non-degenerated, complete and $W$-locally minimal $\Delta$-set and $(S',V',\partial)$ is $W'$-non-degenerated, complete and $W'$-locally minimal $\Delta$-set. If $f:|S|\rightarrow |S'|$ is a simplicial homotopy equivalence which is bijective on vertices and which restriction $f|_{S_{V\setminus W}}:S_{V\setminus W}\rightarrow S'_{V'\setminus W'}$ is an isomorphism, then it is a bijection.
\end{lemma}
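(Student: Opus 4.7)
The plan is induction on the simplex dimension $k$, showing that $f_k:S_k\to S'_k$ is a bijection. The case $k=0$ is the hypothesis on vertices, and for $k>0$ the simplices whose vertices all lie in $V\setminus W$ (equivalently, whose images under the vertex bijection lie entirely in $V'\setminus W'$) are handled by the assumed isomorphism $f|_{S_{V\setminus W}}$. Thus only $k$-simplices with at least one vertex in $W$ need be treated.

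For injectivity at level $k$, suppose $\sigma_1,\sigma_2\in S_k$ satisfy $f(\sigma_1)=f(\sigma_2)$. Applying the inductive hypothesis to each $(k-1)$-face gives $\partial_i\sigma_1=\partial_i\sigma_2$, so $\sigma_1$ and $\sigma_2$ are simplicial immersions $\Delta^k\to|S|$ sharing the same boundary. Picking a homotopy inverse $g:|S'|\to|S|$ of $f$ together with a homotopy $H:g\circ f\simeq\mathrm{id}_{|S|}$, the tracks $H_t\circ\sigma_1$ and the reverse of $H_t\circ\sigma_2$ paste to a free homotopy $\sigma_1\simeq\sigma_2$ whose restriction to each face of $\partial\Delta^k$ is the loop $H_t\circ\partial_i\sigma_1$ traversed forward and then backward. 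Such a loop is null-homotopic rel endpoints, and feeding the null-homotopy into the homotopy extension property for the CW pair $(\Delta^k,\partial\Delta^k)$ upgrades the free homotopy to a homotopy rel $\partial\Delta^k$. Since $\sigma_1$ has a vertex in $W$, the $W$-local minimality of $S$ forces $\sigma_1=\sigma_2$.

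For surjectivity at level $k$, take $\sigma'\in S'_k$ with some vertex in $W'$. By the inductive hypothesis each face $\partial_i\sigma'$ lifts uniquely to $\tau_i\in S_{k-1}$, and injectivity on $(k-2)$-faces forces the $\Delta$-set compatibility $\partial_j\tau_i=\partial_{i-1}\tau_j$ for $j<i$, so the $\tau_i$ fit together into a simplicial immersion $\varphi:\partial\Delta^k\to|S|$. I would then start from $g\circ\sigma':\Delta^k\to|S|$ and use the homotopies $g\circ f\circ\tau_i\simeq\tau_i$ on each face to modify $g\circ\sigma'$, via homotopy extension, into a map $\tilde\sigma:\Delta^k\to|S|$ whose boundary is exactly $\varphi$. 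Completeness of $S$ then upgrades $\tilde\sigma$ to a simplicial immersion $\sigma\in S_k$ homotopic to $\tilde\sigma$ rel $\partial\Delta^k$. Finally, $f(\sigma)$ and $\sigma'$ are simplicial immersions in $S'$ sharing the same boundary, and the same forward–backward pasting trick as in the injectivity step shows they are homotopic rel $\partial\Delta^k$; $W'$-local minimality of $S'$ therefore yields $f(\sigma)=\sigma'$.

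The main obstacle I anticipate is the passage from the free homotopies naturally produced by $g\circ f\simeq\mathrm{id}$ to the rel-boundary homotopies actually required by completeness and local minimality. The crucial observation making this work is that the boundary track of the pasted homotopy is always a loop of the form ``a path followed by its reverse,'' which is null-homotopic rel endpoints, so the CW homotopy extension property closes the gap. The remaining non-degeneracy requirements entering completeness and local minimality are automatic from $W$-non-degeneracy of $S$ and $W'$-non-degeneracy of $S'$, since every simplex in sight has distinct vertices in $W$, respectively $W'$.
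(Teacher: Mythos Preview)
Your proposal is correct and follows essentially the same inductive strategy as the paper. The only stylistic difference is that where the paper argues tersely via spheres---observing that $f(\sigma_1)\cup_{f(\partial\sigma_1)}f(\sigma_2)$ is trivially null-homotopic and then invoking that a homotopy equivalence reflects null-homotopy of spheres to conclude $\sigma_1\cup_{\partial\sigma_1}\sigma_2$ is null-homotopic (hence $\sigma_1\simeq\sigma_2$ rel $\partial\Delta^k$)---you instead pick an explicit homotopy inverse $g$ and manufacture the rel-boundary homotopy by hand using the ``forward--backward'' pasting and the homotopy extension property. Likewise for surjectivity: the paper first produces some $\sigma$ with the right boundary via completeness, then corrects it to a $\sigma''$ hitting $\sigma'$ by realizing the sphere $\sigma'\cup f(\sigma)$ back in $|S|$, while you start directly from $g\circ\sigma'$ and adjust its boundary. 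Both routes encode the same content; yours is more explicit about the homotopy-theoretic plumbing, the paper's is more compressed.
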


\begin{proof}
We will prove the bijectivity between $k$-skeletons of $S$ and $S'$ for $k\in\mathbb{N}$ inductively on $k$. We know that $f|_{S^{(0)}}:S^{(0)}\rightarrow S'^{(0)}$ is a bijection by assumption, so assume $f$ is a bijection on $k-1$ skeletons. To prove the injectivity of $f|_{S^{(k)}}$, let $\sigma, \tau\in S^{(k)}$ be two $k$-simplices with some vertices in $W$ such that $f(\sigma)=f(\tau)$ (if they do not have vertices in $W$, the thesis is obvious). By the bijectivity of $f|_{S^{(k-1)}}:S^{(k-1)}\rightarrow S'^{(k-1)}$ they have the same $k-1$-dimensional faces. Because the sphere $f(\sigma)\cup_{f(\partial\sigma)}f(\tau)$ is homotopically trivial and $f$ is a homotopy equivalence, the sphere $\sigma\cup_{\partial{\sigma}}\tau$ is homotopically trivial. Hence $\sigma=\tau$ by $W$-local minimality of $S$.

To show the surjectivity of $f|_{S^{(k)}}:S^{(k)}\rightarrow S'^{(k)}$, let $\sigma'\in S'^{(k)}$ be a $k$-simplex. Because its boundary (as a map $\partial\Delta^k\rightarrow |S'|$) is homotopically trivial, $f$ is a homotopy equivalence and $f|_{S^{(k-1)}}:S^{(k-1)}\rightarrow S'^{(k-1)}$ is bijective, there exists a simplex $\sigma\in S^{(k)}$ such that $\partial(f(\sigma)) = \partial\sigma'$ as maps $\partial\Delta^k\rightarrow |S'|$. Consider the sphere $\sigma'\cup_{\partial\sigma'}f(\sigma)$. Because $f$ is a homotopy equivalence and $S$ is complete, there exists a simplex $\sigma''\in S^{(k)}$ with the same boundary as $\sigma$ such that $f(\sigma''\cup_{\partial\sigma}\sigma)$ is homotopic to $\sigma'\cup_{\partial\sigma'}f(\sigma)$. Therefore $f(\sigma'')$ is homotopic to $\sigma'$ relative to their common boundary, hence by the $W$-local minimality of $S'$, $\sigma'=f(\sigma'')$.
\end{proof}

\begin{lemma}\label{lemma_multicomplex_extension}
Let $S$ be a $W$-non-degenerated complete $\Delta$-set and let $S'$ be a subcomplex containing $S_{V\setminus W}\cup W$. Then every simplicial map $f:|S'| \rightarrow |S|$ homotopic to the identity relative to $S_{V\setminus W}$ such that $f|_W:W\rightarrow W$ is a bijection can be extended to a (not unique in general) simplicial map $\tilde{f}:|S|\rightarrow |S|$ homotopic to the identity relative to $|S_{V\setminus W}|$.
\end{lemma}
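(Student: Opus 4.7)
The plan is to first extend the given homotopy from the identity to $f$ continuously to all of $|S|$ using the homotopy extension property, and then inductively correct its endpoint to be a simplicial map by applying completeness of $S$ simplex by simplex. The key combinatorial observation is that any $\sigma\in S\setminus S'$ has at least one vertex in $W$: otherwise all vertices of $\sigma$ would lie in $V\setminus W$ and $\sigma\in S_{V\setminus W}\subseteq S'$. Hence the only simplices over which $\tilde{f}$ has to be defined are those meeting $W$ non-trivially, while on $|S_{V\setminus W}|$ the constant (identity) homotopy will always suffice.

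Because $|S'|\hookrightarrow|S|$ is an inclusion of CW subcomplexes it is a cofibration, so the prescribed homotopy $H\colon|S'|\times I\to|S|$ from $\mathrm{id}_{|S'|}$ to $f$ admits a continuous extension $\hat{H}\colon|S|\times I\to|S|$ with $\hat{H}_0=\mathrm{id}_{|S|}$ and $\hat{H}|_{|S'|\times I}=H$. It is automatically constant on $|S_{V\setminus W}|\subseteq|S'|$ since $H$ already is. Put $\hat{f}:=\hat{H}_1$; this is a continuous, typically non-simplicial, extension of $f$ homotopic to $\mathrm{id}_{|S|}$ relative to $|S_{V\setminus W}|$.

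Next I would inductively replace $\hat{f}$ on each $k$-simplex $\sigma\in S\setminus S'$ by a simplicial immersion, in order of increasing $k$. By the inductive hypothesis, $\hat{f}|_{\partial\sigma}$ is already a simplicial immersion. The map $\hat{f}|_\sigma\colon\Delta^k\to|S|$ is $|S_W|$-non-degenerated: the vertices of $\sigma$ contained in $W$ are distinct by $W$-non-degeneracy of $S$, and $\hat{f}$ sends them bijectively onto distinct vertices in $W$ via $f|_W$; vertices of $\sigma$ in $V\setminus W$ are fixed by $\hat{f}$ (as $\hat{H}$ is constant on $|S_{V\setminus W}|$), so they contribute no further $W$-vertex coincidences. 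Completeness of $S$ therefore produces a simplicial immersion $\tau\colon\Delta^k\to|S|$ homotopic to $\hat{f}|_\sigma$ relative to $\partial\sigma$, and I redefine $\hat{f}|_\sigma:=\tau$. This modification is itself a homotopy of $\hat{f}$ relative to $|S'|\cup|S^{(k-1)}|$, hence relative to $|S_{V\setminus W}|$, so the amended $\hat{f}$ still extends $f$ and is still homotopic to $\mathrm{id}_{|S|}$ relative to $|S_{V\setminus W}|$.

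The main (minor) subtlety is verifying that all these simplex-by-simplex modifications assemble into a single simplicial map $\tilde{f}$ together with one coherent homotopy to $\mathrm{id}_{|S|}$. This is routine because the adjustment at dimension $k$ only alters the interiors of the $k$-simplices of $S\setminus S'$, leaving all lower-dimensional data, and in particular the already-simplicial $|S'|$-part, untouched; different $k$-simplices may be adjusted independently. Concatenating $\hat{H}$ with the successive homotopies (each relative to a subcomplex containing $|S_{V\setminus W}|$) then yields the required homotopy from $\mathrm{id}_{|S|}$ to $\tilde{f}$ constant on $|S_{V\setminus W}|$, completing the construction.
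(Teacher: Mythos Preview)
Your proof is correct and follows essentially the same strategy as the paper's: invoke the homotopy extension property to produce a continuous extension of $f$, then inductively apply completeness of $S$ to straighten the endpoint to a simplicial map, simplex by simplex. The only organizational difference is that you perform one global HEP extension upfront for the cofibration $|S'|\hookrightarrow|S|$ and then correct, whereas the paper applies HEP locally on each pair $(\Delta^k,\partial\Delta^k)$ during the induction; you are also a bit more explicit than the paper in verifying the $|S_W|$-non-degeneracy hypothesis needed to invoke completeness.
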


\begin{proof}
To prove the first part, assume inductively that $\tilde{f}$ is defined on $S^{(k-1)}$ for some $k\in\mathbb{N}$. Let $\sigma\in S^{(k)}$ be a simplex with some vertices in $W$ and let $H:\partial\Delta^k\times I \rightarrow |S|$ be a homotopy between the identity on $\partial\sigma$ and $\tilde{f}(\partial\sigma)$. We extend this homotopy to a homotopy $\tilde{H}:\Delta^k\times I\rightarrow |S|$ such that $H(\cdot, 0) = \sigma$. Let $\sigma' = \tilde{H}(\cdot, 1)$. The boundary of $\sigma'$ is a subcomplex of $S$ by the inductive hypothesis, hence by completeness there is a simplex $\sigma''\in S^{(k)}$ which is homotopic to $\sigma'$ relative to its boundary. We define $\tilde{f}(\sigma) = \sigma''$. The fact that $\tilde{f}$ is homotopic to the identity is clear from the construction.
\end{proof}

Let $S$ be a $W$-non-degenerated complete $W$-minimal $\Delta$-set and let $\Gamma^W(S)$ be a group of simplicial automorphisms of $S$, constant on $S_{V\setminus W}$ and homotopic to the identity relative to $S_{V\setminus W}$. Let also $\Gamma^W_i(S)< \Gamma^W(S)$ be a subgroup consisting of the elements fixing $S^{(i)}$.

\begin{lemma}\label{lemma_multicomplex_group}
$\Gamma^W_1(S)/ \Gamma^W_k(S)$ is amenable for every $k\in\mathbb{N}_+$.
\end{lemma}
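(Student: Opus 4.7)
The plan is to realize $\Gamma^W_1(S)/\Gamma^W_k(S)$ as a finite iterated extension of abelian groups and invoke the fact that amenability is preserved under extensions. Concretely, I would consider the descending chain $\Gamma^W_1(S) \supset \Gamma^W_2(S) \supset \cdots \supset \Gamma^W_k(S)$, show that each $\Gamma^W_{i+1}(S)$ is normal in $\Gamma^W_i(S)$, and prove that each successive quotient $\Gamma^W_i(S)/\Gamma^W_{i+1}(S)$ is abelian for $1 \leq i \leq k-1$. A descending induction on $i$ then yields amenability of $\Gamma^W_1/\Gamma^W_k$.

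For each $i \geq 1$, I would construct a homomorphism
\[
\phi_i \colon \Gamma^W_i(S) \longrightarrow \prod_{\sigma} \pi_{i+1}\bigl(|S|, v_\sigma\bigr),
\]
where $\sigma$ ranges over the $(i+1)$-simplices of $S$ with at least one vertex in $W$ and $v_\sigma$ is a chosen vertex of $\sigma$. Given $g \in \Gamma^W_i$ and such a $\sigma$, the simplex $g(\sigma)$ has the same boundary as $\sigma$ (since $g$ is simplicial and fixes $S^{(i)}$); the sphere obtained by gluing $g(\sigma)$ and $\sigma$ along this common boundary defines a based class $\phi_i(g)(\sigma) \in \pi_{i+1}(|S|, v_\sigma)$. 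Since $i+1 \geq 2$, each $\pi_{i+1}$ is abelian, so the target is abelian; it will then be enough to verify that $\phi_i$ is a homomorphism with kernel $\Gamma^W_{i+1}$.

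The main subtlety is the homomorphism property, because in $(gh)(\sigma) = g(h(\sigma))$ the automorphism $g$ is applied to $h(\sigma)$ rather than to $\sigma$ itself. The key input is that every $g \in \Gamma^W(S)$ is homotopic to the identity and hence induces the identity on $\pi_{i+1}(|S|)$. I would first use this to establish the auxiliary fact that $\phi_i(g)(\sigma)$ depends only on $\partial\sigma$: for two $(i+1)$-simplices $\sigma, \sigma'$ with a common boundary, the class of $\sigma' \cup \bar\sigma$ in $\pi_{i+1}$ equals that of $g\sigma' \cup \overline{g\sigma}$ since $g_* = \mathrm{id}$, and rearranging this equality in the $\pi_{i+1}$-torsor of simplices with fixed boundary gives $\phi_i(g)(\sigma') = \phi_i(g)(\sigma)$. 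The homomorphism property then follows from the additive splitting $[(gh)\sigma \cup \bar\sigma] = [g(h\sigma) \cup \overline{h\sigma}] + [h\sigma \cup \bar\sigma]$ combined with the boundary-only dependence just established. Finally, the kernel computation reduces to $W$-local minimality: for $(i+1)$-simplices with no vertex in $W$ the action of $g$ is trivial by the very definition of $\Gamma^W$, while for simplices with some vertex in $W$, the condition $\phi_i(g)(\sigma) = 0$ means $g(\sigma)$ is homotopic to $\sigma$ relative to their common boundary, and $W$-local minimality of $S$ then forces $g(\sigma) = \sigma$, so $g \in \Gamma^W_{i+1}$.
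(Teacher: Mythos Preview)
Your proposal is correct and follows essentially the same approach as the paper: both filter by the chain $\Gamma^W_1 \supset \cdots \supset \Gamma^W_k$, embed each successive quotient $\Gamma^W_i/\Gamma^W_{i+1}$ into $\prod_\sigma \pi_{i+1}(|S|)$ via the ``difference sphere'' $g(\sigma)\cup_{\partial\sigma}\sigma$, verify the homomorphism property using that elements of $\Gamma^W$ act trivially on $\pi_{i+1}$, and identify the kernel using $W$-local minimality. Your explicit isolation of the auxiliary fact that $\phi_i(g)(\sigma)$ depends only on $\partial\sigma$ is a slight reorganization of the paper's computation (where the same point appears as the step $[[f\!\circ\! g\!\circ\!\sigma,\, f\!\circ\!\sigma]] = [[g\!\circ\!\sigma,\,\sigma]]$), but the content is the same.
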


\begin{proof}
We will show, following Gromov in \cite{G}, that for every $i\in\mathbb{N}_+$ the group $\Gamma^W_i(S)/\Gamma^W_{i+1}(S)$ is abelian as it embeds in a product of $\pi_{i+1}(|S|)$. In the rest of the proof, given two maps $f,g:D^i\rightarrow X$ to a space $X$ such that $f|_{S^{i-1}} = g|_{S^{i-1}}$ we will denote by $[f,g]$ the map $S^i\rightarrow X$ defined as $f$ on the upper hemisphere and $g$ on the lower one and by $[[f,g]]$ an element of $\pi_i(X)$ represented by $[f,g]$. In particular, $[[f,g]]^{-1} = [[g,f]]$ and for any map $h:D^i\rightarrow X$ such that $h|_{S^{i-1}} = f|_{S^{i-1}} = g|_{S^{i-1}}$ we have
\[
[[f,g]] = [[[f,h],[g,h]]],
\]
where we use the same notation for $f,g:S^i\rightarrow X$, treating them as functions defined on $D^i$ and constant on $S^{i-1}$. Note that using this notation, if $f,g:S^i\rightarrow X$ then $[[f,g]] = [f]\cdot ([g])^{-1}$ in $\pi_i(X)$.

Let $\sigma\in S^{(i+1)}$ be a simplex. Consider a map $\rho_{\sigma}:\Gamma^W_i(S)\rightarrow \pi_{i+1}(|S|, x_{\sigma})$, where $x_{\sigma}\in\partial_0\sigma$, defined as $\rho_{\sigma}(f):= [[f\circ\sigma, \sigma]]$. We claim that this is a homomorphism. For $f,g\in \Gamma^W_i(S)$ we have
\begin{eqnarray*}
\rho(f\circ g) & = & [[f\circ g\circ\sigma, \sigma]] \\ 
& = & [[[f\circ g\circ\sigma, f\circ\sigma],[\sigma, f\circ\sigma]]] \\
& = & [[[g\circ\sigma, \sigma],[\sigma, f\circ\sigma]]] \\
& = & [[g\circ\sigma, \sigma]]\cdot([[\sigma, f\circ\sigma]])^{-1} \\
& = & [[g\circ\sigma, \sigma]]\cdot [[f\circ\sigma, \sigma]] = \rho(g)\cdot \rho(f),
\end{eqnarray*}
where in the third equation we used the fact that $f$ is homotopic to the identity relative to $S^{(k-1)}$.

Now, consider a homomorphism
\[
\prod_{\sigma\in S^{(i+1)}}\rho_{\sigma}:\Gamma^W_i(S)\rightarrow \prod_{\sigma\in S^{(i+1)}}\pi_{i+1}(|S|,x_{\sigma}).
\]
Its kernel is $\Gamma^W_{i+1}(S)$. Indeed, if $[f(\sigma), \sigma]$ is zero in $\pi_{i+1}(|S|,x_{\sigma})$ for $\sigma\in S^{(i+1)}$ and $f\in\Gamma^W_i(S)$, then by the local $W$-minimality of $|S|$ we have $\sigma = f(\sigma)$, hence $f$ fixes $\sigma$.
\end{proof}

\subsection{Group $\Pi(X,Z)$}\label{subsection_PI}
In this section we come back to more 'concrete' setting, so let $X$ be a metric space and $Z\subset X$ its path-connected compact subset.

Let $\Pi(X,Z)$ be the group consisting of families $([\gamma_x])_{x\in Z}$ of homotopy classes (in $X$) of paths in $Z$ such that
\begin{enumerate}
\item $\gamma_x(0) = x$ for all $x\in Z$;
\item $\gamma_x$ is constant for all but finitely many $x\in Z$; 
\item the map $x\mapsto \gamma_x(1)$ is a bijection of $Z$.
\end{enumerate}
The group structure is given by concatenation of paths, namely
\[
((\gamma_x)_{x\in Z}\cdot (\gamma'_x)_{x\in Z} )_{x_0} = \gamma_{x_0} * \gamma'_{\gamma_{x_0}(1)}.
\]

There is a right action of $\Pi(X,Z)$ on the $1$-skeleton of $K_*(X)$ such that for an edge $e\in K_1(X)$ the edge $e\cdot (\gamma_x)_{x\in Y}$ is a unique edge in $K_1(X)$ which is homotopic relative to its endpoints to $\gamma^{-1}_{e(0)}* e* \gamma_{e(1)}$. Because $\Pi(X,Z)$ acts by automorphisms fixing edges with endpoints in $X\setminus Z$, by Lemmas \ref{lemma_multicomplex_automorphism} and \ref{lemma_multicomplex_extension} every element of $\Pi(X,Z)$ can be extended to an element of $\Gamma^Z(K_*(X))$.

In the following lemma, for a simplex $\sigma\in K_*(X)$ we denote by $\Upsilon^Z_{\sigma}<\Sigma^Z_{\sigma}$ the group generated by all transpositions of vertices that are joined by an edge in $Z$.

\begin{lemma}\label{lemma_properties_Gamma}
Let $\Gamma(X,Z)< \Gamma^Z(K_*(X))$ be the subgroup of all possible extensions of elements of $\Pi(X,Z)$ and let $\Gamma_k(X,Z) = \Gamma(X,Z)/ \Gamma^Z_k(K_*(X))$ for $k\in\mathbb{N}_+$. Then
\begin{enumerate}
\item $\Gamma_k(X,Z)$ preserves $K_*(X)\cap e(Z)$ and $K_*(X)\cap ne(Z)$;
\item for every $l=0,...,k$ and $\sigma\in K_l(X)$ with some vertex in $Z$, $\Gamma_k(X,Z)$ acts transitively on $\Upsilon^Z_{\sigma}\cdot\sigma$;
\item if $\im(\pi_1(Z)\rightarrow \pi_1(X))$ is amenable then $\Gamma_k(X,Z)$ is amenable.
\end{enumerate}
\end{lemma}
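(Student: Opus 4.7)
The plan is to address the three claims in the order (1), (3), (2), since (2) is the most delicate.

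\emph{Part (1).} Recall that $\Pi(X,Z)$ acts on the 1-skeleton of $K_*(X)$ by sending an edge $e$ to the unique edge in $K_1(X)$ homotopic, relative to endpoints, to $\gamma_{e(0)}^{-1} * e * \gamma_{e(1)}$. Since each $\gamma_x$ lies in $Z$ and is constant for $x \notin Z$, this operation preserves the property "edge lying in $Z$": if $e \subset Z$, the concatenation lies in $Z$, and property 2 of $K_*(X)$ forces the replacement edge to lie in $Z$ as well; conversely, an edge with an endpoint outside $Z$ is fixed at that endpoint and so its image cannot lie in $Z$. Extending $\Pi(X,Z)$ to $\Gamma(X,Z)$ preserves this dichotomy on each edge of any simplex, so $\Gamma_k(X,Z)$ permutes $e(Z)$ and $ne(Z)$ internally.

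\emph{Part (3).} The restriction map $\rho : \Gamma(X,Z) \to \Pi(X,Z)$ sending $g$ to the family of paths traced on vertices in $Z$ is surjective by Lemmas \ref{lemma_multicomplex_automorphism} and \ref{lemma_multicomplex_extension}, and its kernel equals $\Gamma(X,Z) \cap \Gamma^Z_1(K_*(X))$ by $Z$-local minimality. Passing to the quotient by $\Gamma^Z_k(K_*(X))$ yields a short exact sequence
\[
1 \to (\Gamma(X,Z) \cap \Gamma^Z_1(K_*(X)))/\Gamma^Z_k(K_*(X)) \to \Gamma_k(X,Z) \to \Pi(X,Z) \to 1,
\]
whose kernel sits inside the amenable group $\Gamma^Z_1(K_*(X))/\Gamma^Z_k(K_*(X))$ (Lemma \ref{lemma_multicomplex_group}). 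The quotient $\Pi(X,Z)$ is itself an extension: the endpoint evaluation $(\gamma_x) \mapsto (x \mapsto \gamma_x(1))$ surjects onto the finitary symmetric group of $Z$ (locally finite, hence amenable), with kernel the direct sum $\bigoplus_{x\in Z} \im(\pi_1(Z,x)\to\pi_1(X,x))$, itself amenable as a direct sum of amenable groups by hypothesis. Since an extension of amenable by amenable is amenable, so is $\Gamma_k(X,Z)$.

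\emph{Part (2).} It suffices to realize every generating transposition of $\Upsilon^Z_\sigma$ as an element of $\Gamma_k(X,Z)$. Given a transposition $s$ swapping vertices $v_i, v_j \in Z$ joined by an edge $e$ of $\sigma$ contained in $Z$, I set $\gamma_{v_i} := e$, $\gamma_{v_j} := e^{-1}$, and $\gamma_x$ constant for all other $x \in Z$; this defines an element of $\Pi(X,Z)$ which I lift to $g \in \Gamma(X,Z)$. The action of $g$ on the 1-skeleton of $\sigma$ already agrees with that of $s$: any edge $(v_i, w)$ of $\sigma$ is sent to the unique edge in $K_1(X)$ homotopic to $e^{-1} * (v_i, w)$, which via the 2-face $(v_i, v_j, w)$ of $\sigma$ is homotopic rel endpoints to the edge $(v_j, w)$ of $\sigma$ and, by $Z$-local minimality, therefore coincides with it — precisely the corresponding edge of $s \cdot \sigma$. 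Proceeding inductively to higher-dimensional faces, property 3 of $K_*(X)$ guarantees $s \cdot \tau \in K_*(X)$ for every face $\tau$ of $\sigma$ permuted by $s$, and the flexibility in Lemma \ref{lemma_multicomplex_extension} allows me to choose $s \cdot \tau$ as the image at each step, so that $g(\sigma) = s \cdot \sigma$ in $K_l(X)$.

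The principal obstacle is the inductive step in Part (2): justifying that at each stage Lemma \ref{lemma_multicomplex_extension} admits $s \cdot \tau$ as a valid extension, rather than merely some other simplex with matching boundary but possibly a different homotopy class. This will require verifying that $s \cdot \tau$ is homotopic to $\tau$ as a map $\Delta^k \to X$ by a homotopy fixing every vertex outside $\{v_i, v_j\}$, realized through a continuous deformation of $\Delta^k$ that collapses the simplex appropriately onto a lower-dimensional cell; this ensures that $s \cdot \tau$ is a legitimate extension compatible with the requirement that $g$ be homotopic to the identity relative to $|S_{V \setminus Z}|$.
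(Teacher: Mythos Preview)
Your argument is essentially correct and follows the paper's approach closely. Parts (1) and (3) match the paper almost verbatim (the paper even dismisses (1) as ``obvious''), and your short exact sequences in (3) are exactly those used there.

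For Part (2) you and the paper take the same underlying idea from opposite ends: you start with the element $(\gamma_x)\in\Pi(X,Z)$ swapping $v_i,v_j$ along $e$ and then argue that its extension can be chosen to send each face $\tau$ of $\sigma$ to $s\cdot\tau$; the paper instead first defines the map $f$ on the sub-$\Delta$-set $K^{\sigma}_*(X)$ by $\sigma\mapsto s\cdot\sigma$, shows directly that $f$ is homotopic to the identity, extends via Lemma~\ref{lemma_multicomplex_extension}, and only afterwards checks that the result lies over $\Pi(X,Z)$. Both routes hinge on the same fact, namely the ``principal obstacle'' you flag at the end. The paper resolves it in one line with the straight-line homotopy on the standard simplex,
\[
H_s:\Delta^l\times I\to\Delta^l,\qquad H_s(x,t)=[x,\,s\cdot x](t),
\]
so that $\sigma\circ H_s$ is a homotopy from $\sigma$ to $s\cdot\sigma$ which is constant on every face disjoint from $\{v_i,v_j\}$ (in particular on all faces lying in $(K_*(X))_{X\setminus Z}$). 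This is precisely the ``continuous deformation of $\Delta^k$'' you describe; writing it down explicitly closes the gap you identified and shows that $s\cdot\tau$ is indeed a legitimate choice at every stage of the extension.
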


\begin{proof}
\leavevmode
\begin{enumerate}
\item This part is obvious.
\item Let $K^{\sigma}_*(X)\subset K_*(X)$ be a sub-$\Delta$-set made form $(K_*(X))_{X\setminus Z}\cup Z$ and $\sigma$ with all its faces and let $s\in\Upsilon^Z_{\sigma}$. Consider a map $f:K^{\sigma}_*(X)\rightarrow K_*(X)$ sending $\sigma$ to $s\cdot\sigma$ and being the identity on $(K_*(X))_{X\setminus Z}$ and $Z\setminus (Z\cap\sigma^{(0)})$. It clearly induces a bijection on $Z$. Moreover, it is homotopic to the identity embedding relative to $(K_*(X))_{X\setminus Z}$. To see this, let $H_s:\Delta^l\times I\rightarrow \Delta^l$ be a geodesic homotopy joining $Id_{\Delta^l}$ with $s$, i.e.
\[
H_s(x,t) = [x, s\cdot x](t).
\]
Then a homotopy $H:K^{\sigma}_*(X)\times I\rightarrow K_*(X)$ between $Id_{K^{\sigma}_*(X)}$ and $f$ is given by
\[
H(x, t):= \begin{cases}
x & \text{ for }x\in (K_*(X))|_{X\setminus Z}\cup (Z\setminus (Z\cap\sigma^{(0)})); \\ 
\sigma\circ H_s(x,t) & \text{ for } x\in\sigma.
\end{cases}
\]
This homotopy is well defined, because it is constant on the faces of $\sigma$ that have no vertices in $Z$ and if some faces of $\sigma$ are the same, the above homotopy preserves these identifications (we use here the fact that $\sigma$ is $Z$-non-degenerated). By Lemma \ref{lemma_multicomplex_extension} $f$ can be extended to a map $\tilde{f}: K_*(X)\rightarrow K_*(X)$ homotopic to the identity and by Lemma \ref{lemma_multicomplex_automorphism} $\tilde{f}$ is an automorphism. Finally, it is easy to see that $\tilde{f}$ is an extension of the following element $(g_s)_{x\in Z}\in \Pi(X,Z)$:
\[
(g_s)_x(t) = \begin{cases}
x & \text{ for } x\notin \sigma^{(0)}; \\
e_{x, s\cdot x}(t) & \text{ for } x\in\sigma^{(0)},
\end{cases}
\]
where $e_{x,y}$ for $x,y\in\sigma^{(0)}$ is an (oriented) edge joining $x$ and $y$. The above element is well defined because $\sigma$ is $Z$-non-degenerated.

\item Note that we have an exact sequence
\[
0 \rightarrow \Gamma^Z_1(K_*(X))/\Gamma^Z_k(K_*(X)) \rightarrow \Gamma_k(X,Z) \rightarrow \Pi(X,Z) \rightarrow 0.
\]
Because $\Gamma^Z_1(K_*(X))/\Gamma^Z_k(K_*(X))$ is amenable by Lemma \ref{lemma_multicomplex_group}, it suffices to show that $\Pi(X,Z)$ is amenable. However, we have an exact sequence
\[
\xymatrix{
0 \ar[r] & \bigoplus_{x\in Z}(im(\pi_1(Z)\rightarrow \pi_1(X))) \ar[r]  & \Pi(X,Z) \ar[r] & \Sigma^{fin}(Z) \ar[r] & 0,
}
\]
where $\Sigma^{fin}(Z)$ is the group of finitely supported permutations of the set $Z$, hence the amenability of $\Pi(X,Z)$ follows from the amenability of $\bigoplus_{x\in Z}(im(\pi_1(Z)\rightarrow \pi_1(X)))$ and $\Sigma^{fin}(Z)$.
\end{enumerate}
\end{proof}

Note that $\Gamma_k(X,Z)$ acts isometrically on the set of $Z$-locally $K_*(X)$-admissible $k$-chains in the following way. If $g\in\Gamma_k(X,Z)$ and $c\in C_k^{\lf}(X,Y)$ is $Z$-locally $K_*(X)$-admissible, then
\[
g\cdot c = \sum_{\sigma\in \supp(c)} c(\sigma)\cdot (g\cdot\sigma),
\]
where we use the convention that $g\cdot\sigma = \sigma$ if $\sigma\notin K_k(X)$. Note that because there are only finitely many simplices in $\supp(c)$ which intersect $Z$, the chain $g\cdot c$ is a finite modification of $c$. Moreover, because the automorphisms in $\Gamma_k(X,Z)$ are homotopic to the identity, if $c$ is a cycle then $[g\cdot c] = [c]$, hence $g\cdot c$ is a finite approximation of $c$.

\subsection{Proof of Lemma \ref{lemma_amenable_subset_distinct_vertices}}\label{subsection_proof_of_subadd}
We are almost ready to prove Lemma \ref{lemma_amenable_subset_distinct_vertices}. We need only two more ingredients. The first one is the following proposition, proved in \cite{FuMa} for singular chains, but the proof for locally finite chains is exactly the same.

\begin{prop}\label{prop_antisymmetrisation}
Let $\Alt:C_*^{\lf}(X,Y)\rightarrow C_*^{\lf}(X,Y)$ be an antisymmetrisation operator defined for $c\in C_k^{\lf}(X,Y)$ as
\[
\Alt(c)  = \frac{1}{|\Sigma_{k+1}|}\sum_{\sigma\in\supp(c)}\sum_{s\in\Sigma_{k+1}} (-1)^{|s|}c(\sigma)(s\cdot\sigma).
\]
Then $\Alt$ is a chain map chain homotopic to the identity.
\end{prop}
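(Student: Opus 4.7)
The plan is to handle this in two stages: first verify the proposition for ordinary singular chains, following the standard argument, and second observe that every operator involved is defined simplex-by-simplex with finite local support, so the passage to locally finite chains is automatic.

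For the chain map identity $\partial \circ \Alt = \Alt \circ \partial$, I would start from the elementary observation that for each $s \in \Sigma_{k+1}$ and $0 \le i \le k$, one has $\partial_i(s\cdot\sigma) = \bar{s}\cdot \partial_{s^{-1}(i)}\sigma$ for a unique $\bar{s} \in \Sigma_k$, and a direct sign computation shows $(-1)^i(-1)^{|s|} = (-1)^{s^{-1}(i)}(-1)^{|\bar{s}|}$. Summing over $i$ with the boundary signs and reindexing $j = s^{-1}(i)$ converts $\partial \Alt(\sigma)$ into $\Alt(\partial \sigma)$ termwise, and linearity does the rest.

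For the chain homotopy between $\Alt$ and the identity, I would use the prism construction. For each adjacent transposition $\tau_i = (i,i{+}1) \in \Sigma_{k+1}$ one builds an operator $P_{\tau_i}$ that sends a simplex $\sigma$ to a finite signed sum of $(k{+}1)$-simplices, obtained by precomposing $\sigma$ with the standard triangulation of $\Delta^k \times I$ and with the affine map that is the identity on $\Delta^k \times \{0\}$ and the permutation $\tau_i$ on $\Delta^k \times \{1\}$; this satisfies $\partial P_{\tau_i}(\sigma) + P_{\tau_i}(\partial\sigma) = \tau_i\cdot\sigma - \sigma$. Composing such prisms along a reduced expression for an arbitrary $s \in \Sigma_{k+1}$ produces $h_s$ with $\partial h_s(\sigma) + h_s(\partial\sigma) = s\cdot\sigma - \sigma$, and the weighted average $h = \frac{1}{(k+1)!}\sum_{s}(-1)^{|s|} h_s$ is the required chain homotopy with $\partial h + h \partial = \Alt - \mathrm{id}$.

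Finally, the extension to $C_*^{\lf}(X,Y)$ is purely formal. Both $\Alt(\sigma)$ and $h(\sigma)$ are finite sums of simplices whose images lie in $\im(\sigma)$; hence if $c = \sum_i a_i \sigma_i$ is locally finite, then so are $\Alt(c)$ and $h(c)$, because any compact $K \subset X$ meets only finitely many $\im(\sigma_i)$ and thus only finitely many simplices appearing in $\Alt(c)$ or $h(c)$. The identities $\partial \Alt = \Alt \partial$ and $\partial h + h\partial = \Alt - \mathrm{id}$ hold on each simplex, hence on the whole locally finite chain by termwise summation. The only real technical point is sign bookkeeping in the prism decomposition; everything else is forced by the fact that the construction is local.
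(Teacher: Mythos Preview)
The paper gives no proof of its own here; it cites \cite{FuMa} for singular chains and remarks that the same argument goes through for locally finite chains. Your verification that $\Alt$ is a chain map and your final paragraph on locality are both correct, and the latter is exactly the paper's one-line observation.

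Your chain-homotopy construction, however, has a genuine error in the averaging step. Grant for the moment that each $h_s$ really satisfies $\partial h_s(\sigma)+h_s(\partial\sigma)=s\cdot\sigma-\sigma$; then with $h=\frac{1}{(k+1)!}\sum_{s}(-1)^{|s|}h_s$ one obtains
\[
\partial h+h\partial\;=\;\frac{1}{(k+1)!}\sum_{s\in\Sigma_{k+1}}(-1)^{|s|}\bigl((s\cdot)-\mathrm{id}\bigr)\;=\;\Alt-\Bigl(\tfrac{1}{(k+1)!}\textstyle\sum_{s}(-1)^{|s|}\Bigr)\,\mathrm{id}\;=\;\Alt,
\]
since $\sum_{s\in\Sigma_{k+1}}(-1)^{|s|}=0$ for every $k\geq 1$. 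This would make $\Alt$ null-homotopic, which is absurd. The underlying problem is that the index $s\in\Sigma_{k+1}$ depends on the degree, so the expression $\partial h_s+h_s\partial$, which mixes degrees $k$ and $k-1$, has no coherent meaning for a fixed $s$; indeed already your prism identity $\partial P_{\tau_i}+P_{\tau_i}\partial=(\tau_i\cdot)-\mathrm{id}$ fails, because the side prism over $\partial_l\Delta^k$ with $l<i$ is governed by $\tau_{i-1}$ rather than $\tau_i$. The clean route (and the one in \cite{FuMa}) is acyclic models: $\Alt$ and $\mathrm{id}$ are natural augmentation-preserving chain endomorphisms of $C_*$, the models $\Delta^n$ are acyclic, hence a natural chain homotopy exists; explicitly one sets $h(\sigma)=\sigma_*(h_k)$ with $h_k\in C_{k+1}(\Delta^k)$ chosen inductively so that $\partial h_k=\Alt(\mathrm{id}_{\Delta^k})-\mathrm{id}_{\Delta^k}-h(\partial\,\mathrm{id}_{\Delta^k})$, using contractibility of $\Delta^k$. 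Your locality argument then carries this over to $C_*^{\lf}$ unchanged.
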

An important consequence of the above proposition is that $Alt(c)$ is a finite approximation of $c$. Note also that by definition $|\Alt(c)|^{ne(Z)}_1\leq |c|^{ne(Z)}_1$. The second ingredient is the following proposition, proved by Gormov in \cite{G}, and a simple corollary. Recall that a \emph{probability measure} on a discrete group $G$ is a non-negative element of $\ell^1(G)$ of norm $1$.

\begin{prop}[{\cite[Section 4.2]{G}}]\label{prop_Gromov_diffusion}
Let $A$ be a set, let $f:A\rightarrow \mathbb{R}$ be a finitely supported function and let $G$ be an amenable group acting transitively on $A$. Then for every $\varepsilon>0$ there exists a finitely supported probability measure $\mu$ on $G$ such that
\[
\|\mu*f\|_1\leq \varepsilon + |\sum_{a\in A}f(a)|,
\]
where $(\mu*f)(x) = \sum_{g\in G}\mu(g)f(g^{-1}x)$.
\end{prop}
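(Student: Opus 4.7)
The plan is to reduce the problem from the $G$-set $A$ to the group $G$ itself, where amenability can be applied directly via the F\o lner condition. Fix any $a_0\in A$; by transitivity we can enumerate $\supp f$ as $\{g_1 a_0,\dots,g_n a_0\}$ and write $f = \sum_{i=1}^n c_i \mathbf{1}_{g_i a_0}$, where $T := \sum_a f(a) = \sum_i c_i$. Associate to this data the $\ell^1(G)$-element $\omega = \sum_i c_i \delta_{g_i}$ and the orbit map $\pi\colon G\to A$, $g\mapsto g a_0$, with its pushforward $\pi_*\nu(x) = \sum_{g\,:\,\pi(g)=x}\nu(g)$.

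A direct calculation with the given convolution convention shows $\mu*f = \pi_*(\mu*\omega)$, and since $\pi_*$ is $\ell^1$-norm-decreasing by the triangle inequality, it suffices to bound $\|\mu*\omega\|_{\ell^1(G)}$. Unfolding the definition gives $\mu*\omega = \sum_i c_i R_{g_i^{-1}}\mu$ with $R_g$ right translation, which we rewrite as the telescoping decomposition
\[
\mu*\omega \;=\; T\cdot\mu \;+\; \sum_{i=1}^n c_i\bigl(R_{g_i^{-1}}\mu - \mu\bigr).
\]

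Now we invoke amenability via F\o lner: for any finite $S\subset G$ and any $\delta>0$ there is a finite $F\subset G$ with $|Fs\triangle F|/|F| < \delta$ for all $s\in S$. Applying this to $S = \{g_1^{-1},\dots,g_n^{-1}\}$ and $\delta = \varepsilon/(1+\sum_j|c_j|)$, and taking $\mu = \mathbf{1}_F/|F|$, we obtain $\|R_{g_i^{-1}}\mu - \mu\|_1 = |Fg_i^{-1}\triangle F|/|F| < \delta$ for each $i$. The triangle inequality applied to the displayed decomposition then yields
\[
\|\mu*f\|_1 \;\le\; \|\mu*\omega\|_1 \;\le\; |T| + \sum_{i=1}^n |c_i|\delta \;\le\; |T|+\varepsilon.
\]

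The main conceptual step is the lifting identity $\mu*f = \pi_*(\mu*\omega)$: once convolution on the possibly complicated $G$-space $A$ is recognised as a pushforward of an ordinary convolution on the group itself, amenability of $G$ is applied directly through the F\o lner condition and the rest reduces to a single triangle inequality. Finite support of $f$ is used only to ensure that one needs to almost-invariantise $\mathbf{1}_F/|F|$ under finitely many right translations simultaneously, which is precisely what F\o lner delivers.
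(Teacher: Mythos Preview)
The paper does not give its own proof of this proposition; it simply cites Gromov \cite[Section 4.2]{G}. Your argument is correct and is essentially the standard way to see the result: lift $f$ to $\omega\in\ell^1(G)$ via the orbit map, recognise $\mu*\omega$ as a finite combination of right translates of $\mu$, telescope against $T\mu$, and apply the F{\o}lner condition. The pushforward identity $\mu*f=\pi_*(\mu*\omega)$ and the norm-decreasing property of $\pi_*$ are exactly as you state.

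One minor bookkeeping slip: with the convention $(R_s\nu)(h)=\nu(hs)$ implicit in your identity $\mu*\omega=\sum_i c_i R_{g_i^{-1}}\mu$, one computes $R_{g_i^{-1}}(\mathbf{1}_F/|F|)=\mathbf{1}_{Fg_i}/|F|$, so the symmetric difference to bound is $|Fg_i\triangle F|$ rather than $|Fg_i^{-1}\triangle F|$. Since the F{\o}lner condition can equally well be taken with respect to $S=\{g_1,\dots,g_n\}$, this does not affect the argument.
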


\begin{cor}\label{corollary_Gromov_diffusion}
Let $f:A\rightarrow \mathbb{R}$ be a finitely supported function and let $G$ be an amenable group acting on $A$ and let $A_1, ..., A_N\subset X$ be the orbits of the elements of $\supp(f)$. Then for every $\varepsilon>0$ there exists a finitely supported probability measure $\mu$ on $G$ such that
\[
\|\mu*f\|_1\leq \varepsilon + \sum_{i=1}^N |\sum_{a\in A_i}f(a)|
\]
\end{cor}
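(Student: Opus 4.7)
The plan is to reduce the corollary to $N$ applications of Proposition \ref{prop_Gromov_diffusion}, exploiting the fact that convolution with a probability measure is an $\ell^1$-contraction so that successive diffusions do not undo earlier ones.

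First I would decompose $f = \sum_{i=1}^{N} f_{i}$, where $f_{i} = f \cdot \chi_{A_{i}}$. Since each orbit $A_{i}$ is $G$-invariant, for any probability measure $\nu$ on $G$ the convolution $\nu * f_{i}$ is still supported in $A_{i}$, so the sets $A_{i}$ remain mutually disjoint after convolution and one has $\|\nu * f\|_{1} = \sum_{i=1}^{N}\|\nu * f_{i}\|_{1}$. Moreover $\sum_{a\in A_{i}}(\nu * f_{i})(a) = \sum_{a\in A_{i}} f_{i}(a)$, because reindexing by $g^{-1}$ is a bijection on $A_{i}$. Thus the quantities on the right-hand side of the inequality are preserved by convolution.

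Next I would build $\mu$ inductively as $\mu = \mu_{N} * \mu_{N-1} * \cdots * \mu_{1}$, choosing the factors one at a time. Set $\nu_{0}$ to be the Dirac mass at the identity. Assuming $\nu_{i-1}$ has already been constructed, note that $\nu_{i-1} * f_{i}$ is a finitely supported function on the $G$-transitive set $A_{i}$ (since $G$ acts transitively on each orbit), with sum $\sum_{a\in A_{i}} f(a)$; applying Proposition \ref{prop_Gromov_diffusion} on $A_{i}$ to this function with tolerance $\varepsilon / N$ yields a finitely supported probability measure $\mu_{i}$ on $G$ satisfying
\[
\|\mu_{i} * \nu_{i-1} * f_{i}\|_{1} \leq \frac{\varepsilon}{N} + \Bigl|\sum_{a\in A_{i}} f(a)\Bigr|.
\]
Set $\nu_{i} = \mu_{i} * \nu_{i-1}$ and continue. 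The final $\mu = \nu_{N}$ is a convolution of finitely supported probability measures, hence itself a finitely supported probability measure on $G$.

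It remains to verify the bound. For each $i$, write $\mu = (\mu_{N} * \cdots * \mu_{i+1}) * \nu_{i}$ and apply the standard $\ell^{1}$-contraction $\|\nu * h\|_{1} \leq \|\nu\|_{1} \cdot \|h\|_{1}$, valid for probability measures, to obtain
\[
\|\mu * f_{i}\|_{1} \leq \|\nu_{i} * f_{i}\|_{1} \leq \frac{\varepsilon}{N} + \Bigl|\sum_{a\in A_{i}} f(a)\Bigr|.
\]
Summing over $i = 1,\dots,N$ and using disjointness of the $A_{i}$ gives
\[
\|\mu * f\|_{1} = \sum_{i=1}^{N} \|\mu * f_{i}\|_{1} \leq \varepsilon + \sum_{i=1}^{N} \Bigl|\sum_{a\in A_{i}} f(a)\Bigr|,
\]
as required. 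There is no real obstacle here: the only subtlety is observing that the later factors $\mu_{N},\dots,\mu_{i+1}$ cannot spoil the $\ell^{1}$-bound obtained after the $i$-th step, which is precisely what the $\ell^{1}$-contraction property of probability-measure convolution guarantees.
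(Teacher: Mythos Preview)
Your proof is correct and follows essentially the same approach as the paper's own proof: decompose $f$ into its orbit pieces $f_i$, apply Proposition~\ref{prop_Gromov_diffusion} successively to each $\nu_{i-1}*f_i$ with tolerance $\varepsilon/N$, set $\mu=\mu_N*\cdots*\mu_1$, and then use the $\ell^1$-contraction of convolution by a probability measure to control the later factors. Your write-up is in fact slightly more explicit than the paper's (you spell out that convolution preserves each $A_i$ and the orbit sums), but the argument is the same.
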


\begin{proof}
Consider $f_i = f|_{A_i}\in \ell^1(A_i)$ for $i=1,...,N$. In particular, $\|f\|=\sum_{i=1}^N\|f_i\|_1$. By induction, choose finitely supported probability measures $\mu_k\in \ell^1(G)$ for $k=1,...,N$, such that
\[
\|\mu_k*(\mu_{k-1}*...*\mu_1*f_k)\|_1\leq \frac{\varepsilon}{N} + |\sum_{a\in A_k}\mu_{k-1}*...\mu_1*f_k(a)| = \frac{\varepsilon}{N} + |\sum_{a\in A_k}f_k(a)|.
\]
Then for $\mu = \mu_N*...*\mu_1$, we have
\begin{eqnarray*}
\|\mu*f\|_1 & = & \|\mu_N*...*\mu_1*f\|_1 = \sum_{i=1}^N \|\mu_N*...\mu_1*f_i\|_1 \\
& \leq & \sum^{N}_{i=1}\|\mu_N\|_1\cdot...\|\mu_{i+1}\|_1\cdot \|\mu_i*...\mu_1*f_i\|_1 \\
& \leq & \sum^{N}_{i=1}(\frac{\varepsilon}{N} + |\sum_{a\in A_i}f_i(a)|) = \varepsilon + \sum_{i=1}^N |\sum_{a\in A_i}f(a)|.
\end{eqnarray*}
\end{proof}

\begin{proof}[Proof of Lemma \ref{lemma_amenable_subset_distinct_vertices}]
Let $\varepsilon>0$ and let $c\in C_k^{\lf}(X,Y)$ be a $Z$-non-degenerated cycle. By Proposition \ref{prop_antisymmetrisation} we can assume that $c$ is antisymmetric, in particular it is $Z$-antisymmetric. We apply Lemma \ref{lemma_Z-admissible} and obtain a $Z$-antisymmetric, $Z$-locally $K_*(X)$-admissible finite approximation $c'$ of $c$ such that $|c'|^{ne(Z)}_1\leq |c|^{ne(Z)}_1$. Consider a function
\[
c'|_{K_k(X)\cap e(Z)}:K_k(X)\cap e(Z)\rightarrow\mathbb{R}.
\]
It is finitely supported by the compactness of $Z$. Let $\mu\in\ell^1(\Gamma_k(X,Z))$ be a probability measure given by Corollary \ref{corollary_Gromov_diffusion} for $c'|_{K_k(X)\cap e(Z)}$ and the action of $\Gamma_k(X,Z)$ on $K_k(X)\cap e(Z)$. The cycle $\mu*c'$ is obviously a finite approximation of $c'$ (hence $c$). We claim that
\[
|\mu*c'|_1\leq |c'|_1^{ne(Z)}+\varepsilon \leq |c|_1^{ne(Z)} +\varepsilon.
\]
Note that because the action of $\Gamma_k(X,Z)$ preserves $K_k(X)\cap e(Z)$ we have $|\mu*c'|^{ne(Z)}_1\leq  |c'|_1^{ne(Z)}$, hence it suffices to show that
\[
|\mu*c'|_1^{e(Z)}\leq \varepsilon.
\]
Let $A_1,...,A_N\subset K_k(X)$ be the orbits of $\supp(c')\cap e(Z)$ by the $\Gamma_k(X,Z)$-action. By Corollary \ref{corollary_Gromov_diffusion} we have
\[
|\mu*c'|_1^{e(Z}\leq \varepsilon + \sum_{i=1}^N |\sum_{\sigma\in A_i} c'(\sigma)|.
\]
We will show that for every $A_i\subset K_k(X)$ for $i=1,...,N$ we have $|\sum_{\sigma\in A_i} c'(\sigma)| = 0$. Let $\sigma\in\supp(c')\cap e(Z)$ and let $s\in \Upsilon^Z_{\sigma}$ be a non-trivial transposition interchanging some vertices of $\sigma$ which are joined by an edge contained in $Z$. By the second part of Lemma \ref{lemma_properties_Gamma} there is an element $g\in\Gamma_k(X,Z)$ such that $g\cdot\sigma = s\cdot\sigma$. Therefore we have
\[
c'(g\cdot\sigma) = c'(s\cdot\sigma) = -c'(\sigma)
\]
by the $Z$-antisymmetry of $c'$. In particular, we can divide all the simplices in $\supp(c)\cap A_i$ into two groups, which coefficients exactly cancel out. It follows that $|\sum_{\sigma\in A_i} c'(\sigma)| = 0$. Because $\varepsilon$ was arbitrary, the lemma follows.
\end{proof}

\subsection{Local barycentric subdivision}\label{subsecton_local_barycentric}
We concentrate now on the proof of Lemma \ref{lemma_local_barycentric}. In general there is no obvious way to modify a chain without increasing $\ell^1$-norm such that its modified simplices have distinct vertices. However, there is an easy method allowing to do so if we drop the constraint on $\ell^1$-norm, by using barycentric subdivision operator $S$ (with some minor modifications). On the other hand, by Lemma \ref{lemma_amenable_subset_distinct_vertices} the inflation of $\ell^1$ norm is not a problem for us as long as we add only simplices with some edges in $Z$. This can be achieved using a \emph{local barycentric subdivision}, described further in this section.

Throughout this section we assume that $(X,Y)$ is a pair of topological spaces and $Z\subset X$ is compact.

\begin{defi}
We say that a simplex $\sigma\in C(\Delta^k, X)$ for $k\in\mathbb{N}$ is \emph{$Z$-barycentrically non-degenerated} if
\begin{itemize}
\item for every face $\sigma'$ of $\sigma$ with vertices in $Z$, the barycenter of $\sigma'$ is distinct from the barycenters of non-trivial faces of $\sigma'$;
\item for every face $\sigma'$ of $\sigma$ with vertices in $Z$, the barycenter of $\sigma'$ is also in $Z$ and at least one edge in $S(\sigma')$ joining this barycenter with some vertex of $\sigma'$ is contained in $Z$;
\item for every edge $e\in \sigma^{(1)}\cup (S\sigma)^{(1)}$ with endpoints in $Z$, if $e$ is homotopic (relative to its endpoints) to an interval in $Z$ then $e$ is contained in $Z$.
\end{itemize}

We say that a chain $c\in C_*^{\lf}(X,Y)$ is \emph{$Z$-barycentrically non-degenerated} if every $\sigma\in\supp(c)$ is $Z$-barycentrically non-degenerated.
\end{defi}

Note that by Lemma \ref{lemma_local_chain_homotopy} we can assume that a given chain $c\in C^{\lf}_*(X,Y)$ is $Z$-barycentrically non-degenerated.

\begin{prop}\label{prop_local_barycentric_subdivision}
There exists a chain operator $S_Z:C^{\lf}_*(X,Y)\rightarrow S_Z:C_*^{\lf}(X,Y)$ such that
\begin{enumerate}
\item $S_Z$ is chain homotopic to the identity;
\item $S_Z=Id$ when restricted to chains consisting of simplices without vertices in $Z$;
\item $S_Z = S$ when restricted to chains consisting of simplices with all vertices in $Z$;
\item for any $Z$-barycentrically non-degenerated chain $c\in C^{\lf}_*(X)$ one has $|S_Z(c)|_1^{ne(Z)}\leq |c|_1^{ne(Z)}$
\end{enumerate}
We call the above operator a \emph{local barycentric subdivision operator}.
\end{prop}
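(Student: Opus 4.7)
The plan is to construct $S_Z$ as a piecewise-linear subdivision operator that refines each singular simplex only across its $Z$-part, and then to verify the four listed properties in turn. For a singular simplex $\sigma\colon\Delta^k\to X$, write $I_\sigma\subset\{0,\dots,k\}$ for the indices of the vertices mapped into $Z$, let $\Delta_{I_\sigma}\subset\Delta^k$ be the face they span, and let $\Delta_{I_\sigma^c}$ be the opposite face. I apply the classical barycentric subdivision to $\Delta_{I_\sigma}$ and keep $\Delta_{I_\sigma^c}$ intact, joining the two to obtain a simplicial subdivision $T_{I_\sigma}$ of $\Delta^k$; then $S_Z(\sigma)$ is defined as the corresponding signed sum of linear simplices composed with $\sigma$, and extended linearly to $C_*^{\lf}(X,Y)$. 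Since the barycenters of $Z$-faces of $\sigma$ lie in $Z$ by the second bullet of $Z$-barycentric non-degeneracy, the operator does produce an honest locally finite chain. Because the restriction of $T_{I_\sigma}$ to the face $\partial_i\Delta^k$ coincides with $T_{I_{\partial_i\sigma}}$, the operator commutes with $\partial$. Properties (2) and (3) are immediate from the extreme cases $I_\sigma=\emptyset$ (no subdivision) and $I_\sigma=\{0,\dots,k\}$ (full subdivision).

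For (1), a chain homotopy between $S_Z$ and the identity is obtained by the standard cone-on-barycenter construction, modified so that the cone point is the barycenter $b_\tau$ of the $Z$-face $\tau$ of $\sigma$: one sets $h(\sigma)=0$ when $I_\sigma=\emptyset$ and inductively $h(\sigma)=b_\tau\ast(\sigma-h(\partial\sigma))$ otherwise. An induction on $k$ verifies $\partial h+h\partial=S_Z-\mathrm{id}$, and local finiteness of $h(c)$ is automatic because $h$ acts simplex-by-simplex.

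The core of the argument, and the main obstacle, is property (4). For each $\sigma\in\supp(c)$ I will bound the total coefficient-weight of the sub-simplices of $S_Z(\sigma)$ that lie in $ne(Z)$. If $\sigma\in e(Z)$, say with an edge $[v_i,v_j]\subset Z$, then for any sub-simplex $\rho$ of $S_Z(\sigma)$ with at least two $Z$-vertices I use bullet (2) to produce $Z$-edges from barycenters to vertices of the relevant $Z$-faces and then chain such $Z$-edges together with the transit segment $[v_i,v_j]$ inside $\sigma$'s image to build, for any two $Z$-vertices of $\rho$, a $Z$-path homotopic relative to endpoints to the edge joining them in $\rho$; bullet (3) then forces that edge of $\rho$ to lie in $Z$, so $\rho\in e(Z)$. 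Hence $\sigma$ contributes nothing to $|S_Z(c)|_1^{ne(Z)}$. If $\sigma\in ne(Z)$, the corresponding statement I aim to prove is that $S_Z(\sigma)$ contains at most one sub-simplex in $ne(Z)$, with coefficient $\pm 1$, so its contribution is at most $|c(\sigma)|$.

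The principal technical difficulty is this last combinatorial count inside $S(\tau)$, where $\tau$ is the $Z$-face of $\sigma$. For each edge $[v_i,v_j]$ of $\tau$, which is not in $Z$ since $\sigma\in ne(Z)$, bullet (2) forces at least one of the halves $[v_i,b_{ij}]$, $[b_{ij},v_j]$ to lie in $Z$, and bullet (3) rules out both lying in $Z$ simultaneously (otherwise $[v_i,v_j]$ would be homotopic to a $Z$-path, hence in $Z$); so exactly one half is in $Z$. An inductive argument on $\dim\tau$ propagates this dichotomy to edges between barycenters of higher-dimensional $Z$-faces, using bullets (2) and (3) in tandem, and shows that exactly one flag of faces of $\tau$ yields a sub-simplex free of forced $Z$-edges. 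Under the join with $\Delta_{I_\sigma^c}$ this flag produces the unique sub-simplex of $S_Z(\sigma)$ that may lie in $ne(Z)$. Summing the resulting bounds over $\sigma\in\supp(c)$ yields $|S_Z(c)|_1^{ne(Z)}\leq|c|_1^{ne(Z)}$.
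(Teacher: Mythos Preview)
Your construction of $S_Z$ as the join of the barycentric subdivision of the $Z$-face with the opposite face is exactly the operator the paper builds (compare the recursive formula there with the bijection $\supp(S_W(\Delta^k))\to\supp(S(\Delta^k_W))$ established in the proof), and your treatment of properties~(2) and~(3) is fine. Two points need repair.

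First, the chain homotopy formula $h(\sigma)=b_\tau\ast(\sigma-h(\partial\sigma))$ is not correct: expanding $\partial h(\sigma)+h(\partial\sigma)$ you will not find $S_Z(\sigma)$ anywhere, so the identity $\partial h+h\partial=S_Z-\mathrm{id}$ cannot hold. The usual cone recipe needs the term $-S_Z(\sigma)$ inside the bracket, and even then the fact that the cone point $b_\tau$ jumps when you pass to a face $\partial_i\sigma$ with $i\in I_\sigma$ makes the verification delicate. The paper avoids this by noting that $S_Z$ is functorial in the pair $(X,Z)$ and appealing to a general acyclic-models lemma (Lemma~\ref{lemma_chain_homotopy_appendix}), which also handles local finiteness automatically.

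Second, and this is the real gap, your argument for $\sigma\in e(Z)$ does not work as written. You assert that from \emph{any} $Z$-vertex of a sub-simplex $\rho$ you can reach \emph{any} other by a $Z$-path, by dropping from a barycenter to an original vertex via bullet~(2) and then using the transit edge $[v_i,v_j]$. But bullet~(2) only provides a $Z$-edge from a barycenter to \emph{some} vertex of its face, not one of your choosing; if that vertex happens to be neither $v_i$ nor $v_j$, and the remaining original edges are not in $Z$, your chain of $Z$-edges cannot be completed. The conclusion $\rho\in e(Z)$ is correct, but the mechanism is a pigeonhole rather than a direct construction: colour the vertices of $S\tau$ by $Z$-connected component in the $1$-skeleton of $S\tau$; bullet~(2) forces every barycenter to share a colour with some original vertex of its face, and since $v_i$ and $v_j$ share a colour there are at most $\dim\tau$ colours for the $\dim\tau+1$ vertices of the $Z$-part of $\rho$, so two coincide, and bullet~(3) then places the edge between them in $Z$. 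This is Lemma~\ref{lemma_counting_edges_main} in the paper. Your $ne(Z)$ case is aiming at the companion statement of that same lemma, but the phrase ``an inductive argument propagates this dichotomy'' is not yet a proof; it becomes one once you run the same colouring argument and show inductively that exactly one simplex of $S\tau$ has all vertices of distinct colours.
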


In particular, it follows from the compactness of $Z$ that for every cycle $c\in C^{\lf}_*(X,Y)$ the cycle $S_Z(c)$ is a finite approximation of $c$.

Having the above proposition, we are ready to prove Lemma \ref{lemma_local_barycentric}.

\begin{proof}[Proof of Lemma \ref{lemma_local_barycentric}]
Given a cycle $c\in C_*^{\lf}$, we can assume it is $Z$-barycentrically non-degenerated. Then the cycle $S_Z(c)$ is $Z$-non-degenerated and by Proposition \ref{prop_local_barycentric_subdivision} it is a finite approximation of $c$ such that $|S_Z(c)|^{ne(Z)}_1 \leq |c|^{ne(Z)}_1$.
\end{proof}

The rest of this section is devoted to the proof of Proposition \ref{prop_local_barycentric_subdivision}. To prove that the local barycentric subdivision is homotopic to the identity for finite chains it would suffice to use acyclic model theorems. However, for locally finite chains we will need the following lemma.

\begin{lemma}\label{lemma_chain_homotopy_appendix}
For a pair of topological spaces $(X,Z)$ let $T^{(X,Z)}_*: C_*(X)\rightarrow C_*(X)$ be a chain operator such that $T^{(X,Z)}_0=Id_0$ and $T^{(X,Z)}_*$ is functorial in the sense that for any continuous map $f:X\rightarrow X'$ we have $T^{(X',f(Z))}f_* = f_*T^{(X,Z)}$. Then $T^{(X,Z)}$ is chain homotopic to the identity. Moreover, the same is true for the simplex-wise extension of $T^{(X,Z)}$ to locally finite chains.
\end{lemma}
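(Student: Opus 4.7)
The plan is to apply the method of acyclic models, parametrized by the subset $Z$. I will construct the chain homotopy $H$ by induction on degree, first on the universal model simplex $\iota_k:\Delta^k\to\Delta^k$, and then transport it to arbitrary singular simplices by the functoriality of $T$.

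For each $k\in\mathbb{N}$ and each $W\subset\Delta^k$, I construct inductively a chain $b_{k,W}\in C_{k+1}(\Delta^k)$ required to satisfy
\[
\partial b_{k,W} \;=\; T^{(\Delta^k,W)}(\iota_k)\;-\;\iota_k\;-\;\sum_{i=0}^{k}(-1)^{i}(\delta^{i})_{*}\bigl(b_{k-1,\,(\delta^{i})^{-1}(W)}\bigr).
\]
For $k=0$ one sets $b_{0,W}=0$, which is legitimate because $T_{0}=\mathrm{Id}$. For the inductive step, a short computation using that $T^{(\Delta^k,W)}$ is a chain map, together with the identities already satisfied by the $b_{k-1,\cdot}$ on the faces, shows that the right-hand side is a cycle; contractibility of $\Delta^k$ then provides a primitive, which we take as $b_{k,W}$. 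For an arbitrary pair $(X,Z)$ and singular simplex $\sigma\in C_{k}(X)$, define
\[
H^{(X,Z)}(\sigma)\;:=\;\sigma_{*}\bigl(b_{k,\sigma^{-1}(Z)}\bigr),
\]
and extend linearly. Pushing the defining relation for $b_{k,\sigma^{-1}(Z)}$ forward under $\sigma_{*}$ and invoking the functoriality hypothesis on $T$ yields the identity $\partial H^{(X,Z)}+H^{(X,Z)}\partial=T^{(X,Z)}-\mathrm{Id}$.

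For the locally finite extension, observe that since $b_{k,W}\in C_{k+1}(\Delta^{k})$ one has $\im H^{(X,Z)}(\sigma)\subset\im\sigma$. Hence for $c\in C_{k}^{\lf}(X)$ and any compact $K\subset X$, only finitely many $\sigma\in\supp(c)$ satisfy $\im\sigma\cap K\neq\emptyset$, so only finitely many terms of the simplex-wise sum $H^{(X,Z)}(c):=\sum_{\sigma}c(\sigma)H^{(X,Z)}(\sigma)$ meet $K$. Consequently $H^{(X,Z)}(c)$ is itself locally finite, and the chain homotopy identity survives simplex by simplex.

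The most delicate step, which I expect to be the main obstacle, is the verification of the chain homotopy identity on a single $\sigma$. Functoriality yields $\sigma_{*}T^{(\Delta^{k},\sigma^{-1}(Z))}(\iota_{k})=T^{(X,\,\sigma(\sigma^{-1}(Z)))}(\sigma)=T^{(X,Z\cap\sigma(\Delta^{k}))}(\sigma)$; for this to coincide with $T^{(X,Z)}(\sigma)$ one needs $T^{(X,Z)}(\sigma)$ to be insensitive to the portion of $Z$ lying outside $\im\sigma$. This is extracted from the functoriality axiom by factoring $\sigma=i\circ\sigma'$ through the image $\sigma(\Delta^{k})$ and applying the axiom to both the inclusion $i$ and the corestriction $\sigma'$. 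Once this independence is in place, the remaining bookkeeping is standard.
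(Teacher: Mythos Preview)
Your approach is essentially the paper's: both build the homotopy inductively on the universal simplex and push forward via $\sigma_*$ using functoriality; the only cosmetic difference is that the paper writes down an explicit cone formula $P^{(\Delta^n,Z)}_n(\Delta^n)=c_n\bigl(T^{(\Delta^n,Z)}_n(\Delta^n)-\Delta^n-P^{(\Delta^n,Z)}_{n-1}(\partial\Delta^n)\bigr)$ rather than appealing abstractly to contractibility.

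You are right to flag the issue $\sigma_*T^{(\Delta^k,\sigma^{-1}(Z))}(\iota_k)=T^{(X,\,Z\cap\im\sigma)}(\sigma)$ versus $T^{(X,Z)}(\sigma)$, but your proposed resolution does not work. Factoring $\sigma=i\circ\sigma'$ through $\im\sigma$ and applying the axiom to the inclusion $i$ requires choosing a subset of $\im\sigma$ in the domain; its image under $i$ can then only be $Z\cap\im\sigma$, never all of $Z$. The functoriality hypothesis as stated simply does not let you compare $T^{(X,Z)}$ with $T^{(X,Z\cap\im\sigma)}$ on the same space.

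This is in fact a minor imprecision in the lemma's hypotheses rather than a flaw in your argument: the paper's own proof silently makes the same identification when it writes $(\sigma_i)_*T^{(\Delta^n,\sigma_i^{-1}(Z))}(\Delta^n)=T^{(X,Z)}(\sigma_i)$. What is really needed (and should be added as an explicit hypothesis) is the locality condition $T^{(X,Z)}(\sigma)=T^{(X,Z\cap\im\sigma)}(\sigma)$. The intended application, the local barycentric subdivision $S_Z$, satisfies this trivially since $S_Z(\sigma)$ depends only on which vertices of $\sigma$ land in $Z$. With that extra clause in place, your proof goes through exactly as written.
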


\begin{proof}
By induction on $k\in\mathbb{N}$ we will construct a functorial chain homotopy joining $T^{(X,Z)}$ and $Id$, i.e. an operator $P^{(X,Z)}_k:C_k(X)\rightarrow C_{k+1}(X)$ such that
\begin{itemize}
\item $\partial P^{(\Delta^n, Z)}_* + P^{(\Delta^n, Z)}_{*-1}\partial = T_*^{(\Delta^n, Z)} - Id_*$ for $Z\subset\Delta^n$;
\item if $\sigma\in C(\Delta^k,X)$ and $Z\subset X$ then
\[
P^{(X, Z)}_k(\sigma) = \sigma_* P_k^{(\Delta^k,\sigma^{-1}(Z))}(\Delta^k).
\]
\end{itemize}
For $k=0$ we can put $P^{(X,Z)}_0 = 0$. Assume that the operator $P^{(X,Z)}_k$ with the above properties is constructed for $k<n$. To construct it for $k=n$ it suffices to construct $P^{(\Delta^n,Z)}_n(\Delta^n)$ for $Z\subset\Delta^n$. Indeed, by functoriality we would then have
\[
P_n^{(X,Z)}(\sigma) = \sigma_*(P_n^{(\Delta^n,\sigma^{-1}(Z))}(\Delta^n))
\]
for arbitrary simplex $\sigma\in C(\Delta^n, X)$. Moreover,
\begin{eqnarray*}
P^{(X,Z)}_{n-1}\partial\sigma & = & \sum_{j=0}^n(-1)^j P^{(X,Z)}_{n-1}\partial_j\sigma \\
& = & \sum_{j=0}^n(-1)^j (\partial_j\sigma)_*(P^{(\Delta^{n-1},(\partial_j\sigma)^{-1}(Z))}(\Delta^{n-1})) \\
& = & \sum_{j=0}^n(-1)^j (\sigma\circ\delta^j)_*(P^{(\Delta^{n-1},(\sigma\circ\delta^j)^{-1}(Z))}(\Delta^{n-1})) \\
& = & \sum_{j=0}^n(-1)^j \sigma_*(P^{(\Delta^n,\sigma^{-1}(Z))}(\delta^j\Delta^{n-1})) \\
& = & \sigma_*(P^{(\Delta^n,\sigma^{-1}(Z))}(\partial\Delta^n)).
\end{eqnarray*}
It would follow that
\begin{eqnarray*}
(\partial P^{(X,Z)}_n + P^{(X,Z)}_{n-1}\partial)(\sum_i a_i \sigma_i) & = & \sum_i a_i (\sigma_i)_*(\partial P_n^{(\Delta^n,\sigma^{-1}(Z))}(\Delta^n) +P_{n-1}^{(\Delta^n,\sigma^{-1}(Z))}(\partial\Delta^n)) \\
& = &  \sum_i a_i (\sigma_i)_*(T^{(\Delta^n,\sigma^{-1}(Z))}_n(\Delta^n)-\Delta^n) \\
& = & (T^{(X,Z)}_n - Id_n)(\sum_i a_i \sigma_i).
\end{eqnarray*}

We come back to the definition of $P^{(\Delta^n,Z)}_n(\Delta^n)$. For $n\in\mathbb{N}$ denote by $c_n:C_*(\Delta^n)\rightarrow C_{*+1}(\Delta^n)$ an operator that cones the singular simplices with the barycenter $b_n$ of $\Delta^n$, i.e. for $\sigma\in C(\Delta^k, \Delta^n)$ we have
\[
c_n(\sigma)(t_0,...,t_{k+1}) = t_0\cdot b_n + (1-t_0)\cdot \sigma(\frac{t_1}{1-t_0},...,\frac{t_{k+1}}{1-t_0}).
\]
Note that we have an equality
\[
\partial c_n(\sigma) = \sigma - c_n(\partial\sigma).
\]
We define
\[
P^{(\Delta^n,Z)}_n(\Delta^n):= c_n(T^{(\Delta^n,Z)}_n(\Delta^n) - \Delta^n -P^{(\Delta^n,Z)}_{n-1}(\partial\Delta^n))
\]
and we check that
\begin{eqnarray*}
(\partial P^{(\Delta^n,Z)}_n + P^{(\Delta^n,Z)}_{n-1}\partial)(\Delta^n) & = & T^{(\Delta^n,Z)}_n(\Delta^n) - \Delta^n - c_n(\partial T_n^{(\Delta^n,Z)}(\Delta^n)) +c_n(\partial\Delta^n) +c_n(\partial P^{(\Delta^n,Z)}_{n-1}(\partial\Delta^n)) \\
& = & T^{(\Delta^n,Z)}_n(\Delta^n) - \Delta^n - c_n(T^{(\Delta^n,Z)}_{n-1}(\partial\Delta^n)) +c_n(\partial\Delta^n) \\
& & + c_n(T^{(\Delta^n,Z)}_{n-1}(\partial\Delta^n) - \partial\Delta^n -  P^{(\Delta^n,Z)}_{n-2}(\partial\partial\Delta^n)) \\
& = & T^{(\Delta^n,Z)}_n(\Delta^n) -\Delta^n,
\end{eqnarray*}
where in the second equality we used the fact that $T^{(\Delta^n,Z)}_*$ is a chain operator and the inductive hypothesis that
\[
\partial P^{(\Delta^n,Z)}_k = T^{(\Delta^n,Z)}_k - Id_k - P^{(\Delta^n,Z)}_{k-1}\partial
\]
for $k<n$.

Finally, we observe that because of the functorial nature of $T^{(X,Z)}_*$, for every $k\in\mathbb{N}$ and every simplex $\sigma\in C(\Delta^k, X)$, all the simplices in $\supp(T^{(X,Z)}_k(\sigma))$ and $\supp(P^{(X,Z)}_k(\sigma))$  are contained in $\sigma$, hence the simplex-wise extension of $T^{(X,Y)}_*$ to locally finite chains is well defined and homotopic to the identity.
\end{proof}

Another ingredient of the proof of Proposition \ref{prop_local_barycentric_subdivision} is the following lemma, which will help us to bound the norm $|S_Z(c)|_1^{ne(Z)}$.

\begin{lemma}\label{lemma_counting_edges_main}
Let $\sigma\in C(\Delta^k, X)$ be a $Z$-non-degenerated simplex with all vertices in $Z$. Then
\begin{enumerate}
\item if $\sigma\in ne(Z)$ then there exists exactly one simplex $\sigma'\in\supp(S\sigma)$ with no edges in $Z$;\label{lemma_counting_noedge}
\item if $\sigma\in e(Z)$ then $\supp(S\sigma)\subset e(Z)$.\label{lemma_counting_edge}
\end{enumerate}
\end{lemma}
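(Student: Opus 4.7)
The plan is to induct on the dimension $k$, exploiting the cone decomposition $S\sigma=c_{b(\sigma)}(S(\partial\sigma))$: the top-dimensional simplices of $S\sigma$ are parametrized by permutations $\pi$ of $\{0,\dots,k\}$, each being the cone from $b(\sigma)$ over a top-dimensional simplex $\beta_\pi$ of $S(\partial_{\pi(k)}\sigma)$. The edges of such a simplex $\sigma_\pi$ split into edges of the base $\beta_\pi$ and \emph{spokes} from $b(\sigma)$ to the vertices of $\beta_\pi$, so the analysis reduces to controlling which spokes (and which edges of the base) can land in $Z$. Throughout I implicitly assume $\sigma$ is also $Z$-barycentrically non-degenerated, since this lemma will only be applied in that setting; the third condition of $Z$-barycentric non-degeneracy---saying that edges of $\sigma^{(1)}\cup(S\sigma)^{(1)}$ homotopic rel endpoints to intervals in $Z$ lie in $Z$---will be used repeatedly.

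For the base case $k=1$, the subdivision $S\sigma$ consists of two half-edges $v_0\to b(\sigma)$ and $v_1\to b(\sigma)$. If $\sigma\in e(Z)$ both lie in $Z$, giving (2). If $\sigma\in ne(Z)$, $Z$-barycentric non-degeneracy forces at least one spoke into $Z$; if both lay in $Z$, their concatenation would be a path in $Z$ homotopic rel endpoints to $\sigma$, and the third condition would force $\sigma\subset Z$, contradicting $\sigma\in ne(Z)$. Hence exactly one spoke is in $Z$, giving (1). The same argument produces, for every face $\tau$ of any higher-dimensional $\sigma\in ne(Z)$, a unique \emph{chosen vertex} $v_\tau^\circ\in\tau$ with $b(\tau)\to v_\tau^\circ$ in $Z$---the crucial combinatorial data driving the inductive step.

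For part (2) in the inductive step, with in-$Z$ edge $[v_i,v_j]$ of $\sigma$: if $\pi(k)\notin\{i,j\}$ then $\partial_{\pi(k)}\sigma\supset[v_i,v_j]$ and the inductive hypothesis gives $\beta_\pi\in e(Z)$, hence $\sigma_\pi\in e(Z)$. If $\pi(k)\in\{i,j\}$ and $\partial_{\pi(k)}\sigma$ still contains some other in-$Z$ edge, the same inductive argument works; otherwise $\partial_{\pi(k)}\sigma\in ne(Z)$ and I plan to produce an in-$Z$ spoke of $\sigma_\pi$ directly, by concatenating the chosen spoke $b(\sigma)\to v_{i_0}$ with a path along $[v_i,v_j]$ and the chosen spokes of relevant sub-faces. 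Contractibility of $\Delta^k$ makes the resulting path in $Z$ homotopic rel endpoints to a spoke of $\sigma_\pi$, so the third condition forces that spoke into $Z$.

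For part (1), I will strengthen the inductive hypothesis so that it also identifies the first vertex of the unique $ne(Z)$-simplex $\beta_i^*$ of $S(\partial_i\sigma)$ in terms of chosen-vertex data: at $k=1$ it is the non-chosen vertex, and this propagates upward. A candidate $\sigma_\pi$ lies in $ne(Z)$ only if $\beta_\pi=\beta_{\pi(k)}^*$ and every spoke from $b(\sigma)$ to a vertex of $\beta_{\pi(k)}^*$ avoids $Z$; the unique in-$Z$ spoke $b(\sigma)\to v_{i_0}$ coming from the chosen-vertex analysis of $\sigma$ then singles out exactly one admissible value of $\pi(k)$. The main obstacle will be controlling the spokes $b(\sigma)\to b(\tau)$ to the non-vertex barycenters occurring among the vertices of $\beta_{\pi(k)}^*$: these must lie in $Z$ precisely when forced by the combinatorics of chosen vertices, and this is exactly where the third condition of $Z$-barycentric non-degeneracy, propagated transitively through shortcut homotopies inside $\sigma$, becomes the delicate ingredient.
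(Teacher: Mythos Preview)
Your plan is correct and is, at its core, the same argument the paper gives---but the paper packages it much more efficiently via a \emph{colouring}. Your ``chosen vertex'' $v_\tau^\circ$ of a face $\tau$ is precisely what the paper calls the \emph{colour} of the barycenter $b(\tau)$: the paper colours all vertices of $S\Delta^k$ by declaring two to have the same colour when they are joined by a path in $(S\Delta^k)^{(1)}$ that $\sigma$ maps into $Z$. The second condition of $Z$-barycentric non-degeneracy then says every barycenter shares a colour with some original vertex, and the third condition gives the key transitivity in one stroke: two adjacent vertices of $S\Delta^k$ have the same colour if and only if the edge between them lies in $Z$. With this in hand, a simplex of $S\Delta^k$ lies in $ne(Z)$ exactly when its $k+1$ vertices carry pairwise distinct colours, and the induction becomes a one-line counting argument (for part~(1): the barycenter's colour kills exactly one face; for part~(2): only $\leq k$ colours are available, so pigeonhole forces a repeat).

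Your direct cone-and-spoke analysis reproduces all of this but forces you to re-derive the transitivity repeatedly---this is exactly the ``main obstacle'' you flag at the end, and the colouring is the clean way to discharge it once and for all. A couple of places where your sketch would need tightening: in part~(2), your case~3 (where $\partial_{\pi(k)}\sigma\in ne(Z)$) should split further according to whether $\beta_\pi$ is the unique $ne(Z)$-simplex of $S(\partial_{\pi(k)}\sigma)$ or one of the others (in the latter subcase the base already has an in-$Z$ edge and no spoke argument is needed); and in part~(1) your strengthened hypothesis should record not just the first vertex of $\beta_i^*$ but that the chosen vertices of \emph{all} its vertices enumerate exactly the vertices of $\partial_i\sigma$---this is what makes the ``singles out exactly one $\pi(k)$'' step go through for both uniqueness and existence. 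Both fixes are straightforward, and once made your argument is equivalent to the paper's.
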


\begin{proof}
\leavevmode
\begin{enumerate}
\item Assume $\sigma\in ne(Z)$. Colour the vertices of $S\Delta^k$ such that two vertices have the same colour if and only if they are connected by a path in $(S\Delta^k)^{(1)}$ which is mapped by $\sigma$ to $Z$. By the second condition of $Z$-barycentrically non-degenerateness every vertex $v\in (S\Delta^k)^{(0)}$ has the same colour as some vertex of the minimal face of $\Delta^k$ containing $v$. Moreover, from the third condition it follows that if two vertices $v,w$ of $S\Delta^k$ have the same colour, the interval in $\Delta^k$ joining them is mapped by $\sigma$ to $Z$.

We will show by induction that there exists exactly one simplex $\Delta'\in\supp S(\Delta^k)$ with vertices with distinct colours, which will end the proof. It is clear for $k=1$, so let $k>1$. Assume by induction hypothesis that for every face $\Delta'$ of $\Delta^k$ of codimension one there exists exactly one simplex $\Delta''\in\supp(S\Delta')$ with distinctly coloured vertices. Moreover, these colours are the same as the colours of the vertices of $\Delta'$. Note that every $\bar{\Delta}\in\supp S\Delta^k$ is a cone with the barycenter of $\Delta^k$ as a vertex and a simplex $\bar{\Delta}'\in\supp S\partial\Delta^k$ as a base. It follows that anyhow we colour the barycenter, there will be only one simplex in $\supp S\Delta^k$ that has distinctly coloured vertices.

\item Assume now that $\sigma\in e(Z)$. Repeat the argument above, but start with vertices of $\Delta^k$ coloured in the same colour whenever the corresponding edge of $\sigma$ joining them is contained in $Z$. Then, by the assumption, there are at most $k$ available colours, and all simplices in $\supp S\Delta^k$ have $k+1$ vertices, hence the lemma follows also in this case.
\end{enumerate}
\end{proof}

\begin{proof}[Proof of Proposition \ref{prop_local_barycentric_subdivision}]
In this proof we denote by $F(\Delta^k)$ the set of faces of $\Delta^k$ and by $v_0,...,v_k$ the vertices of $\Delta^k$.

Let $\sigma\in C(\Delta^k, X)$ be a singular simplex and let $\mathbb{V}_{\sigma}\subset\{v_0,..., v_k\}$ be the vertices of $\Delta^k$ which are mapped by $\sigma$ to $Z$. We define $S_Z:C(\Delta^k,X)\rightarrow C_*(X)$ to be the identity for $k=0$ and for $k>0$
\[
S_Z(\sigma) := \begin{cases} \sigma_*(b^{\Delta^k}_{\sigma^{-1}(Z)}S_{\sigma^{-1}(Z)}\partial_{\sigma^{-1}(Z)}(\Delta^k)) & \text{ if }\mathbb{V}_{\sigma}\neq \emptyset; \\
\sigma & \text{ if } \mathbb{V}_{\sigma}= \emptyset,
 \end{cases}
\]
where for $W\subset \Delta^k$ and $\Delta'\in F(\Delta^k)$
\begin{itemize}
\item $\partial_W:C(\Delta^*,\Delta^k)\rightarrow C_{*-1}(\Delta^k)$ is defined to be
\[
\partial_W\tau = \sum_{i\::\: \tau(v_i)\in W}(-1)^i\partial_i\tau
\]
for $\tau\in C(\Delta^*,\Delta^k)$;
\item $b^{\Delta'}_W:C(\Delta^*,\Delta^k)\rightarrow C_{*+1}(\Delta^k)$ is defined to be
\[
b^{\Delta'}_W(\tau)(t_0,...,t_{l+1}) = t_0\cdot b^{\Delta'}(W) +(1-t)\cdot\tau(\frac{t_1}{1-t_0},...,\frac{t_{l+1}}{1-t_0})
\]
for $\tau\in C(\Delta^l,\Delta^k)$ and $(t_0,...,t_{l+1})\in \Delta^{l+1}$, where $b^{\Delta'}(W)$ is the barycenter of the maximal face in $F(\Delta')$ with vertices in $W$ (i.e. the face spanned by $(\Delta')^{(0)}\cap W$). Note that
\[
\partial b^{\Delta'}_W(\tau) = \tau - b^{\Delta'}_W\partial\tau.
\]
Moreover, if $\Delta''\in F(\Delta')$ and $(\Delta')^{(0)}\cap W\subset \Delta''$, then $b_W^{\Delta'} = b_W^{\Delta''}$.
\end{itemize}
See Figure \ref{figure_local_barycentric}. It is also good to compare this definition with a definition of the standard barycentric subdivision \cite{HAT}.

\begin{figure}[h]
\centering
\includegraphics[width=0.5\textwidth]{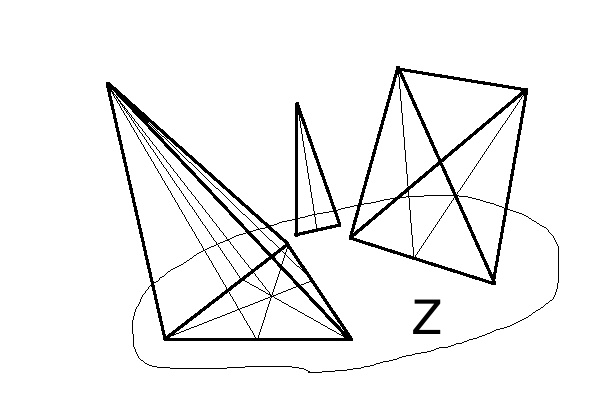}
\caption{Locally barycentricaly subdivided simplices.}
\label{figure_local_barycentric}
\end{figure}

Now we need to check all the properties in the thesis of the proposition for $S_Z$. First of all, we need to check that $S_Z$ is a chain operator, but before doing that, let us prove two technical claims. Let $k\in\mathbb{N}$, $W\subset\Delta^k$ and $\tau\in C(\Delta^*,\Delta^k)$.

\emph{Claim 1}: $\partial_W\partial_W = 0$.
\begin{eqnarray*}
\partial_W\partial_W(\tau) & = & \sum_{i\::\: \tau(v_i)\in W}(-1)^i\partial_W\partial_i\tau \\
& = & \sum_{\{i:\: \tau(v_i)\in W\}}\sum_{\{j<i:\: \tau(v_j)\in W\}}(-1)^{i+j}\partial_j\partial_i\tau + \sum_{\{i:\: \tau(v_i)\in W\}}\sum_{\{j> i:\: \tau(v_j)\in W\}}(-1)^{i+j-1}\partial_{j-1}\partial_i\tau \\
& = & \sum_{\{i>j:\: \{\tau(v_i),\tau(v_j)\}\subset W\}}(-1)^{i+j}\partial_j\partial_i\tau + \sum_{\{i<j:\: \{\tau(v_i),\tau(v_j)\}\subset W\}} (-1)^{i+j-1}\partial_i\partial_j\tau \\
& = & \sum_{\{i>j:\: \{\tau(v_i),\tau(v_j)\}\subset W\}}(-1)^{i+j}\partial_j\partial_i\tau + (-1)^{i+j-1}\partial_j\partial_i\tau  \\
& = & 0.
\end{eqnarray*}

\emph{Claim 2}: $\partial\partial_W = -\partial_W\partial$.
\begin{eqnarray*}
\partial\partial_W(\tau) = & = & (\partial-\partial_W+\partial_W)\partial_W(\tau) \\
& = & (\partial-\partial_W)\partial_W(\tau) \\
& = & \sum_{i\::\: \tau(v_i)\in W}(-1)^i(\partial-\partial_W)\partial_i\tau \\
& = & \sum_{\{i:\: \tau(v_i)\in W\}}\sum_{\{j<i:\: \tau(v_j)\notin W\}}(-1)^{i+j}\partial_j\partial_i\tau + \sum_{\{i:\: \tau(v_i)\in W\}}\sum_{\{j>i:\: \tau(v_j)\notin W\}}(-1)^{i+j-1}\partial_{j-1}\partial_i\tau \\
& = & \sum_{\{i>j:\: \tau(v_i)\in W,\, \tau(v_j)\notin W \}}(-1)^{i+j}\partial_j\partial_i\tau + \sum_{\{i<j:\: \tau(v_i)\in W,\, \tau(v_j)\notin W\}} (-1)^{i+j-1}\partial_{j-1}\partial_i\tau \\
& = & \sum_{\{i>j:\: \tau(v_i)\in W,\, \tau(v_j)\notin W \}}(-1)^{i+j}\partial_{i-1}\partial_j\tau + \sum_{\{i<j:\: \tau(v_i)\in W,\, \tau(v_j)\notin W\}} (-1)^{i+j-1}\partial_i\partial_j\tau \\
& = & -\sum_{j\::\: \tau(v_j)\notin W}(-1)^j(\partial_W)\partial_j\tau \\
& = & -\partial_W(\partial-\partial_W)(\tau) \\
& = & -\partial_W\partial(\tau).
\end{eqnarray*}

Having the above claims proved, we are ready to prove that $S_Z$ is a chain map. If $\sigma$ has no vertices in $Z$, then $S_Z(\sigma)=\sigma$, in particular $\partial S_Z(\sigma) = S_Z(\partial\sigma)$. Otherwise, we have by induction on $\dim \sigma$
\begin{eqnarray*}
\partial S_Z(\sigma) & = & \sigma_*\partial(b^{\Delta^k}_{\sigma^{-1}(Z)}S_{\sigma^{-1}(Z)}\partial_{\sigma^{-1}(Z)}(\Delta^k)) \\
& = & \sigma_*(S_{\sigma^{-1}(Z)}\partial_{\sigma^{-1}(Z)} - b^{\Delta^k}_{\sigma^{-1}(Z)}\partial S_{\sigma^{-1}(Z)}\partial_{\sigma^{-1}(Z)})(\Delta^k) \\
& = & \sigma_*(S_{\sigma^{-1}(Z)}\partial_{\sigma^{-1}(Z)} - b^{\Delta^k}_{\sigma^{-1}(Z)} S_{\sigma^{-1}(Z)}\partial\partial_{\sigma^{-1}(Z)})(\Delta^k) \\
& = & \sigma_*(S_{\sigma^{-1}(Z)}\partial_{\sigma^{-1}(Z)} + b^{\Delta^k}_{\sigma^{-1}(Z)} S_{\sigma^{-1}(Z)}\partial_{\sigma^{-1}(Z)}\partial)(\Delta^k) \\
& = & \sigma_*(S_{\sigma^{-1}(Z)}\partial_{\sigma^{-1}(Z)} + b^{\Delta^k}_{\sigma^{-1}(Z)} S_{\sigma^{-1}(Z)}\partial_{\sigma^{-1}(Z)}(\partial-\partial_{\sigma^{-1}(Z)}))(\Delta^k) \\
& = & \sigma_*(S_{\sigma^{-1}(Z)}\partial_{\sigma^{-1}(Z)})(\Delta^k) + \sum_{\{i\::\:v_i\notin \sigma^{-1}(Z)\}}(-1)^i \sigma_*(b^{\Delta^k}_{\sigma^{-1}(Z)} S_{\sigma^{-1}(Z)}\partial_{\sigma^{-1}(Z)})(\partial_i\Delta^k) \\
& = & \sigma_*(S_{\sigma^{-1}(Z)}\partial_{\sigma^{-1}(Z)})(\Delta^k) + \sum_{\{i\::\:v_i\notin \sigma^{-1}(Z)\}}(-1)^i \sigma_*(b^{\partial_i\Delta^k}_{\sigma^{-1}(Z)} S_{\sigma^{-1}(Z)}\partial_{\sigma^{-1}(Z)})(\partial_i\Delta^k) \\
& = & \sigma_*(S_{\sigma^{-1}(Z)}\partial_{\sigma^{-1}(Z)})(\Delta^k) + \sum_{\{i\::\:v_i\notin \sigma^{-1}(Z)\}}(-1)^i \sigma_*(S_{\sigma^{-1}(Z)}(\partial_i\Delta^k)) \\
& = & \sigma_*(S_{\sigma^{-1}(Z)}\partial_{\sigma^{-1}(Z)})(\Delta^k) + \sigma_*(S_{\sigma^{-1}(Z)}(\partial - \partial_{\sigma^{-1}(Z)})(\Delta^k)) \\
& = & \sigma_*(S_{\sigma^{-1}(Z)}\partial)(\Delta^k) \\
& = & S_Z\partial(\sigma).
\end{eqnarray*}

Knowing that $S_Z$ is a chain operator, we can now prove that it has the desired properties.

\begin{enumerate}
\item It follows easily from Lemma \ref{lemma_chain_homotopy_appendix}.
\item It is obvious from the definition of $S_Z$.
\item It is easy to prove by induction that if $\sigma^{(0)}\subset Z$ then $S_Z\sigma = \sigma_*(b^{\Delta^k}_{\Delta^k}S\partial)(\Delta^k) = S\sigma$ (compare with the definition of barycentric subdivision from e.g. \cite{HAT}).
\item Let $W\subset\Delta^k$ be such that $\Delta^k$ is $W$-barycentrically non-degenerated. For $\tau\in C(\Delta^*, \Delta^k)$ we denote by $\tau_W$ the maximal face of $\tau$ with the vertices in $W$. The key observation is that if $\Delta^k_W\neq\emptyset$ then there is a bijection
\begin{eqnarray*}
\supp(S_W(\Delta^k)) & \rightarrow & \supp(S(\Delta^k_W)), \\
\tau & \mapsto & \tau_W.
\end{eqnarray*}
It can be proved by induction on $k$. For $k=0$ the observation is obvious, so assume $k>0$. Note that the elements of $\supp(\partial_W\Delta^k)$ correspond to codimension-$1$ faces of $\Delta^k_W$ by the bijection $\Delta''\mapsto \Delta''\cap \Delta^k_W = \Delta''_W$. Using the induction hypothesis it follows that there is a bijection
\begin{eqnarray*}
\supp(S_W\partial_W(\Delta^k)) & \rightarrow & \supp(S\partial \Delta^k_W); \\
\Delta'' & \mapsto & \Delta''\cap \Delta^k_W.
\end{eqnarray*}
Because $b^{\Delta^k}_W$ is an operator of taking cone with a vertex in $W$, we have a bijection
\begin{eqnarray*}
\supp(S_W(\Delta^k)) = \supp(b^{\Delta^k}_W S_W\partial_W(\Delta^k)) & \rightarrow & \supp(b^{\Delta^k}_W S\partial(\Delta^k_W)) = \supp(S(\Delta^k_W)) \\
\tau = b^{\Delta^k}_W\tau' & \mapsto & b^{\Delta^k}_W(\tau'_W) = (b^{\Delta^k}_W\tau')_W = \tau_W.
\end{eqnarray*}
This finishes the proof of the observation.

By the above observation, we conclude that for $\sigma\in\supp(c)$ such that $\sigma_Z\neq\emptyset$ the simplices in $\supp(S_Z(\sigma))$ are constructed from the simplices in $\supp(S(\sigma_Z))$ by taking cones multiple times, hence by Lemma \ref{lemma_counting_edges_main} $\supp(S_Z(\sigma))\subset e(Z)$ if $\sigma\in e(Z)$ and there is exactly one simplex in $\supp(S_Z(\sigma))\cap ne(Z)$ if $\sigma\in ne(Z)$. It is also obvious that $S_Z=Id$ for simplices with no vertices in $Z$, hence 
\[
|S_Z(c)|^{ne(Z)}_1 \leq |c|^{ne(Z)}_1.
\]
\end{enumerate}
\end{proof}

\section{Superadditivity of the locally finite simplicial volume I}\label{section_amenable_trick}
Having proved the subadditivity part of Theorem \ref{Theorem_main}, we turn our attention to superadditivity part. Our goal in this section is to prove the following proposition, which implies the superadditivity part of Theorem \ref{Theorem_main} under the assumption of asphericity of a boundary piece that we use to glue manifolds.

\begin{prop}\label{prop_chain_decomposition}
Let $(X_1,Y_1)$ and $(X_2,Y_2)$ be two pairs of $CW$-complexes and let $Z_i\subset Y_i$ for $i=1,2$ be two compact path-connected aspherical components such that there is a homeomorphism $f:Z_1\rightarrow Z_2$. Assume moreover that $\pi_1(Z_1)$ is amenable and injects both into $\pi_1(X_1)$ and $\pi_1 (X_2)$. Let $k\geq 2$ and let $c\in C_k^{\lf}(X_1\cup_f X_2, (Y_1\setminus Z_1)\cup (Y_2\setminus Z_2))$ be a cycle. Then for every $\varepsilon>0$ there exist cycles $c_1\in C^{\lf}_k(X_1, Y_1)$ and $c_2\in C^{\lf}_k(X_2, Y_2)$ which are finite approximations of restrictions of $c$ to $C_k^{\lf}(X_1\cup_f X_2, Y_1\cup X_2)$ and $C_k^{\lf}(X_1\cup_f X_2, X_1\cup Y_2)$ respectively, such that
\[
|c_1|^{ne(Z)}_1+|c_2|^{ne(Z)}_1 \leq |c|_1+\varepsilon.
\]
In particular, $\|[c_1]\|_1+\|[c_2]\|_1 \leq \|[c]\|_1$ by Proposition \ref{prop_amenable_subset}.
\end{prop}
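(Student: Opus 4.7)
The aim is to replace $c$ with a finite approximation $\tilde c$ that decomposes as $\tilde c = \tilde c_1 + \tilde c_2 + \tilde c_Z$, where each summand is supported on simplices whose image lies entirely in $X_1$, entirely in $X_2$, or entirely in $Z$, respectively. Then $c_1 := \tilde c_1 + \tilde c_Z$ will sit in $C^{\lf}_k(X_1, Y_1)$ (since $\tilde c_Z \subset Z \subset Y_1$) and $c_2 := \tilde c_2$ in $C^{\lf}_k(X_2, Y_2)$. These are genuine relative cycles: any cancellation between an $X_1$-simplex and an $X_2$-simplex in $\partial \tilde c$ happens along a face in $X_1 \cap X_2 = Z \subset Y_1 \cap Y_2$, so the uncancelled boundary of each $c_i$ sits in $Y_i$. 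Every simplex of $\tilde c_Z$ has all its vertices, and hence all its edges, in $Z$, so $\tilde c_Z$ contributes nothing to $|c_1|_1^{ne(Z)}$.

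\textbf{Step 1 (Preparation).} Using the machinery of Section \ref{section_subadditivity}, I replace $c$ by a finite approximation which is $Z$-non-degenerated, $Z$-antisymmetric, and $Z$-locally $K_*(X_1 \cup_f X_2)$-admissible. I apply, in order: Lemma \ref{lemma_local_chain_homotopy} to achieve $Z$-barycentric non-degeneration, Proposition \ref{prop_local_barycentric_subdivision} to achieve $Z$-non-degeneration, Proposition \ref{prop_antisymmetrisation} to antisymmetrise, and Lemma \ref{lemma_Z-admissible} to achieve $K_*$-admissibility. By compactness of $Z$ and local finiteness of $c$, only finitely many simplices of $c$ meet $Z$, so each of these modifications is a finite modification; none of them increases $|c|_1^{ne(Z)}$.

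\textbf{Step 2 ($Z$-cleaning).} This is the geometric heart of the proof. I build, by induction on dimension, a compatible system of homotopies $H^\sigma_k$ on $K_*(X_1 \cup_f X_2)$ (in the sense of Lemma \ref{lemma_local_chain_homotopy}) such that for each $Z$-non-degenerated $\sigma$ the chain $H^\sigma_k(\cdot, 1)$ consists entirely of ``$Z$-clean'' simplices --- those whose image lies entirely in one of $X_1$, $X_2$, or $Z$. The inductive step reduces to the following filling principle: given a simplex $\tau : \Delta^l \to X_1 \cup_f X_2$ whose boundary has already been cleaned into $Z$-clean chains matching the vertex type, $\tau$ is homotopic relative to $\partial \Delta^l$ to a chain of $Z$-clean simplices. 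The obstructions to this filling live in $\pi_j(Z)$ for $j \geq 2$, which vanish by asphericity of $Z$, and in $\ker(\pi_1(Z) \to \pi_1(X_j))$, which vanishes by the injectivity hypothesis. Moreover, $K_*$-admissibility lets me choose all edges between two $Z$-vertices to lie in $Z$, so every correction simplex produced by the cleaning has its image in $Z$. Applying Lemma \ref{lemma_local_chain_homotopy} with this system and $Y = \bigcup\{\im \sigma : \sigma \in \supp(c),\ \sigma \cap Z \neq \emptyset\}$ yields the required $\tilde c$.

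\textbf{Step 3 and main obstacle.} With $\tilde c$ in hand the decomposition is immediate; the bound follows since each simplex of $\tilde c$ not entirely in $Z$ appears with the same coefficient in exactly one of $c_1$ or $c_2$, giving
\[
|c_1|_1^{ne(Z)} + |c_2|_1^{ne(Z)} \leq |\tilde c|_1^{ne(Z)} \leq |c|_1^{ne(Z)} + \varepsilon \leq |c|_1 + \varepsilon.
\]
The main difficulty is clearly Step 2: setting up the cleaning homotopies coherently across all of $K_*$, and verifying that the obstruction theory really does collapse as claimed. The asphericity hypothesis on $Z$ is exactly what is needed to annihilate the higher obstructions; Section \ref{section_higher_dim_trick} later relaxes this to $\pi_k(Z) = 0$ for $k = 2, \dots, n-2$ by a separate ``higher dimensional cell trick'' that trades asphericity for an amenability assumption on the remaining ambient boundary.
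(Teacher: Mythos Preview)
Your Step~2 contains a genuine gap, and it is precisely the place where the real work of the proposition lives.

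First, Lemma~\ref{lemma_local_chain_homotopy} as stated produces a \emph{single} simplex $\tau_\sigma = H^\sigma_k(\cdot,1)$ for each $\sigma$; it does not output a chain. A simplex $\sigma$ with one vertex in $X_1\setminus Z$ and another in $X_2\setminus Z$ cannot be homotoped rel boundary to a single $Z$-clean simplex, so the framework you invoke does not apply. If instead you mean to replace each such $\sigma$ by a \emph{chain} of $Z$-clean simplices (via some subdivision-and-push procedure), then the bound $|\tilde c|_1^{ne(Z)}\le |c|_1^{ne(Z)}+\varepsilon$ is exactly what needs proving, and your sketch gives no mechanism for it. Your sentence ``every correction simplex produced by the cleaning has its image in $Z$'' is false: if $\sigma$ has vertices on both sides of $Z$, any $Z$-clean decomposition of $\sigma$ must contain at least one simplex landing in $X_1$ and not in $Z$, and at least one landing in $X_2$ and not in $Z$. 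There is no obstruction-theoretic reason for these extra pieces to lie in $Z$, and the vanishing of $\pi_j(Z)$ for $j\ge 2$ controls only whether fillings exist, not how many pieces they cost. A telling symptom: your argument never uses the amenability of $\pi_1(Z)$, yet the paper's proof uses it essentially.

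The paper's route is quite different. One first makes $c$ \emph{$Z$-compatible} (Lemma~\ref{lemma_Z_compatibility}) so that some iterated barycentric subdivision $S^{(K)}(c)$ already splits. Then, working in the universal cover $\wt X$, one uses the asphericity of $Z$ to build a retraction $\wt X\to\hat Z$ and from it retractions $r_i:\wt X\to\hat X_i$. Summing the retracted translates of each lifted simplex gives a chain $D(r_1,r_2,c)\in C^{\lf}_*(X_1,Y_1)\oplus C^{\lf}_*(X_2,Y_2)$; the key geometric input (Lemma~\ref{lemma_property_of_R}, via the Bass--Serre tree Lemma~\ref{lemma_tree}) is that for each simplex at most \emph{one} retracted translate lies in $ne(Z)$, so $|D(r_1,r_2,c)|_1^{ne(Z)}\le |c|_1$. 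This $D$ is not a cycle, but its boundary outside $Y$ is finite and can be made $\varepsilon$-small by diffusing over the amenable group $\bigoplus\pi_1(Z)$ acting on the choices of retraction (Proposition~\ref{prop_Gromov_diffusion}). Finally one corrects the near-cycle to an honest cycle by adding $T((\partial c')|_{n(Y)})$, where $T$ is the chain homotopy between $S^{(K)}$ and the identity; this costs only $\varepsilon\cdot\|T\|$. The asphericity is used to get the retraction, the $\pi_1$-injectivity to get the tree, and the amenability to diffuse the boundary defect; none of these three roles appears in your Step~2.
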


For the rest of this section, we will write for short $X := X_1\cup_f X_2$, $Y := Y_1\cup_f Y_2$ and $Z := Z_1\cong Z_2$. The strategy is to construct 'projections' $\rho^{\varepsilon}_i: C_*^{\lf}(X,Y)\rightarrow C^{\lf}_*(X_i, Y_i)$ for $i=1,2$ and arbitrary $\varepsilon$ such that for every $c\in C_k^{\lf}(X, Y)$
\[
|\rho_1^{\varepsilon}(c)|_1^{ne(Z)} + |\rho_2^{\varepsilon}(c)|_1^{ne(Z)}\leq |c|_1 + \varepsilon.
\]
Together with Lemma \ref{prop_amenable_subset} it would yield Proposition \ref{prop_chain_decomposition}.

Before we start the construction, we will need the following simple technical lemma, which follows easily from Lemma \ref{lemma_local_chain_homotopy}.

\begin{lemma}\label{lemma_Z_compatibility}
Let $c\in C_k^{\lf}(X, Y)$ be a cycle. Then there is a finite approximation $c'$ of $c$ such that $|c'|_1\leq |c|_1$ and every $\sigma\in\supp(c')$ has the following properties:
\begin{enumerate}
\item for every edge $e$ of $\sigma$ and its lift $\wt{e}$ to the universal covering $\pi:\wt{X}\rightarrow X$ the number of connected components of $\wt{e}\cap \pi^{-1}(X\setminus Z)$ is minimal in the set of paths homotopic to $\wt{e}$ relative to its endpoints;
\item there exists $N$ (depending on $\sigma$) such that $S^{(N)}(\sigma)\in C_*^{\lf}(X_1, Y_1)\oplus C_*^{\lf}(X_2, Y_2)$.
\end{enumerate}
We say that such a simplex $\sigma$ (resp. a cycle $c'$) is \emph{$Z$-compatible}.
\end{lemma}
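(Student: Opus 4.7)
The plan is to apply Lemma \ref{lemma_local_chain_homotopy} to $c$ with $Y$ a compact neighborhood of $Z$ in $X_1\cup_f X_2$, using a system of homotopies $\{H^\sigma_k\}$ built inductively in $k$ so that $\tau^\sigma:=H^\sigma_k(\cdot,1)$ satisfies (1) and (2) whenever $\sigma$ meets $Z$, and $H^\sigma_k$ is constant otherwise. Since $Z$ is compact and $c$ is locally finite, only finitely many simplices in $\supp(c)$ touch $Y$, so the output $c'$ will be a finite modification. The simplices untouched by the construction are disjoint from $Z$, so each lifted edge satisfies $\wt e\cap \pi^{-1}(X\setminus Z)=\wt e$ (one connected component, which is the minimum since the homotopy class has no representative in $Z$ — the endpoints are not in $Z$), and the image of the simplex lies entirely in $X_1$ or in $X_2$, making (2) automatic.

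For $k=0$ take $H^\sigma_0$ to be constant. For $k=1$, given an edge $\sigma$, pick a representative $\tau^\sigma$ in its rel-endpoints path-homotopy class whose lift minimizes $\#\pi_0(\wt{\tau^\sigma}\cap \pi^{-1}(X\setminus Z))$; the minimum is attained because this count lies in $\mathbb{N}$. Using that $(X_1\cup_f X_2, Z)$ is a CW pair, apply cellular approximation rel endpoints within the same homotopy class to arrange further that $(\tau^\sigma)^{-1}(Z)$ is a finite set of points; this preserves both the homotopy class and hence the minimality of (1), and ensures (2) at the edge level after one subdivision of $[0,1]$. Let $H^\sigma_1$ be any homotopy rel endpoints from $\sigma$ to $\tau^\sigma$.

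For $k\geq 2$, assuming $H^\sigma_k|_{\partial\Delta^k\times I}$ has been prescribed by the face-compatibility condition, extend $H^\sigma_k$ to $\Delta^k\times I$ by the homotopy extension property, producing some intermediate $\widehat\tau:=H^\sigma_k(\cdot,1)$. Then apply the relative simplicial approximation theorem (relative to $\partial\Delta^k$, on which $\widehat\tau$ is already simplicial by the induction hypothesis) to homotope $\widehat\tau$ rel $\partial\Delta^k$ to a map $\tau^\sigma$ which is simplicial with respect to a triangulation $\mathcal{T}$ of $\Delta^k$ refining the boundary triangulations and such that $(\tau^\sigma)^{-1}(Z)$ is a subcomplex of $\mathcal{T}$; concatenate this homotopy with the previous one to obtain the final $H^\sigma_k$. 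Some iterate $S^{(N)}$ of the barycentric subdivision refines $\mathcal{T}$, so each sub-simplex of $S^{(N)}(\tau^\sigma)$ is contained in the closure of a single open cell of $\mathcal{T}$, and therefore lies in $X_1$ or in $X_2$; this yields (2). Since the edges of $\tau^\sigma$ coincide with those installed at step $k=1$, property (1) persists.

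Lemma \ref{lemma_local_chain_homotopy} then delivers a cycle $c'$ homologous to $c$ with $|c'|_1\leq|c|_1$, equal to $\sum_\sigma c(\sigma)\tau^\sigma$ on the simplices meeting $Y$ and unchanged elsewhere, which is a finite approximation of $c$ satisfying (1) and (2). The principal difficulty I expect is the inductive bookkeeping: one must verify that the simplicial-approximation step for $k\geq 2$ can genuinely be carried out rel $\partial\Delta^k$ without altering the 1-skeleton chosen at $k=1$, which is the reason the edges are fixed once and for all and all higher-dimensional corrections are arranged by homotopies that are stationary on their boundaries.
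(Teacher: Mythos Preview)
Your strategy—inductively build face-compatible homotopies and invoke Lemma~\ref{lemma_local_chain_homotopy}—is exactly what the paper indicates (it supplies no further details beyond that reference). The outline is right, but two steps do not go through as written. At $k=1$ you choose a minimizer and then apply cellular approximation, saying this ``preserves the homotopy class and hence the minimality of (1)''; but being a minimizer is not a homotopy invariant—the \emph{value} of the minimum is, the property of \emph{attaining} it is not—so the inference fails. The fix is to build the edge directly as a concatenation of arcs in successive copies of $\wt{X_i}$ along the geodesic in the Bass--Serre tree joining the pieces containing the lifted endpoints, crossing each intervening copy of $\wt{Z}$ in a single point; such a path simultaneously realizes the minimum and already has $(\tau^\sigma)^{-1}(Z)$ finite, so no further approximation is needed.

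At $k\ge 2$ you assert that ``some iterate $S^{(N)}$ of the barycentric subdivision refines $\mathcal{T}$''. This is false for a general triangulation $\mathcal{T}$: already on $\Delta^1$ the vertices of $S^{(N)}$ are the dyadic rationals $k/2^N$, so no $S^{(N)}$ refines a subdivision with a vertex at $1/3$. What property~(2) actually requires is that $\tau^{\sigma\,-1}(X_1)$ and $\tau^{\sigma\,-1}(X_2)$ each be unions of simplices of some $S^{(M)}(\Delta^k)$, not of an arbitrary $\mathcal{T}$. One clean repair: after extending via the homotopy extension property, use the bicollar of $Z$ in $X$ to homotope $\widehat\tau$ rel $\partial\Delta^k$ so that an entire $\delta$-neighbourhood of $\widehat\tau^{-1}(Z)$ maps into $Z$; then $\widehat\tau^{-1}(X_1\setminus Z)$ and $\widehat\tau^{-1}(X_2\setminus Z)$ are disjoint closed sets at positive distance (any straight segment between them must cross the thickened preimage), and once the mesh of $S^{(N)}(\Delta^k)$ drops below that distance every sub-simplex lies in $\widehat\tau^{-1}(X_1)$ or $\widehat\tau^{-1}(X_2)$.
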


Note that by the compactness of $Z$ and local finiteness of a $Z$-compatible cycle $c$ there is a number $K\in\mathbb{N}$ such that $S^{(K)}(c)\in C_*^{\lf}(X_1, Y_1)\oplus C_*^{\lf}(X_2, Y_2)$.

The main lemma which we will prove in this section is the following. Recall that by $n(Y)\subset C(\Delta^*, X)$ we denote the set of singular simplices not contained in $Y$.

\begin{lemma}\label{lemma_boundary_trick}
Let $c\in C_*^{\lf}(X,Y)$ be a $Z$-compatible cycle for $*\geq 2$ which is an image of a cycle from $C_*^{\lf}(X,Y\setminus Z)$ and let $K\in\mathbb{N}$ be a number such that $S^{(K)}(c)\in C_*^{\lf}(X_1,Y_1)\oplus C_*^{\lf}(X_2,Y_2)$. For every $\varepsilon>0$ there exists a finite modification $c'\in C_*^{\lf}(X_1,Y_1)\oplus C_*^{\lf}(X_2,Y_2)$ of $c$ such that
\begin{itemize}
\item $S^{(K)}(c') = S^{(K)}(c)$ in $C_*^{\lf}(X_1,Y_1)\oplus C_*^{\lf}(X_2,Y_2)$;
\item $|c'|^{ne(Y)}_1\leq |c|^{ne(Y)}_1$;
\item $\supp((\partial c)|_{n(Y)})$ is finite and $|(\partial c)|_{n(Y)}|_1\leq \varepsilon$;
\end{itemize}
\end{lemma}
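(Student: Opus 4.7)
The plan is to modify $c$ only at those simplices whose image meets $Z$, which by $Z$-compatibility, local finiteness of $c$, and compactness of $Z$ form a finite set $\mathcal{S}\subset \supp(c)$. Simplices in $\supp(c)\setminus\mathcal{S}$ already lie in $X_1\setminus Z$ or $X_2\setminus Z$ and will be kept unchanged in $c'$, so the ``finite modification'' requirement is automatic.

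For each $\sigma\in\mathcal{S}$ (a $k$-simplex), the hypothesis on $K$ partitions the subdivided $\Delta^k$ into regions $R_1,R_2$ of subsimplices mapped to $X_1,X_2$ respectively, with interface a subcomplex contained in $\sigma^{-1}(Z)$. I would construct a replacement $\tau_1(\sigma)+\tau_2(\sigma)$ with $\tau_i(\sigma)\in C_*^{\lf}(X_i)$ by collapsing the ``wrong-side'' region $R_{3-i}$ onto its interface with $R_i$ via a homotopy furnished by the asphericity of $Z$ (equivalently, contractibility of $\wt Z$) together with $\pi_1$-injectivity of $Z\hookrightarrow X_i$, applied after lifting to the universal cover of $X$; the match $S^{(K)}(\tau_1+\tau_2)=S^{(K)}(\sigma)$ is built in by the construction. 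The $ne(Y)$-norm bound $|c'|_1^{ne(Y)}\leq |c|_1^{ne(Y)}$ then follows from Lemma~\ref{lemma_counting_edges_main}: for a $Z$-barycentrically non-degenerate $\sigma\in ne(Y)$, the subdivision $S^{(K)}(\sigma)$ has exactly one piece in $ne(Y)$ on each side, and the collapse preserves this count, while simplices $\sigma\in e(Y)$ produce only replacements in $e(Y)$ since every new edge generated by the collapse lies in $Z\subset Y$.

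The new internal faces between $\tau_1(\sigma)$ and $\tau_2(\sigma)$ lie in $Z\subset Y$ and cancel pairwise within each $\sigma$, so the residual boundary $\partial c' - \partial c$ is a finitely supported chain in $Z$. The main obstacle is making the part of this residual that lies outside $Y$ small in $\ell^1$-norm: the collapse at each $\sigma$ is not canonical but depends on a homotopy class in $\pi_1(Z)$, and different choices transform the error chain equivariantly under the $\pi_1(Z)$-action on chains in $Z$. This is where amenability of $\pi_1(Z)$ enters essentially: applying the diffusion principle (Proposition~\ref{prop_Gromov_diffusion} / Corollary~\ref{corollary_Gromov_diffusion}) to the finitely supported error chain by averaging the collapse choices with a F\o{}lner probability measure on $\pi_1(Z)$, one forces its $\ell^1$-norm restricted to $n(Y)$ below $\varepsilon$, delivering the last condition.
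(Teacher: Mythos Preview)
Your high-level strategy---lift to the universal cover, use retractions furnished by contractibility of $\wt Z$, then diffuse over an amenable group to shrink the residual boundary---matches the paper's. But the implementation you sketch has two genuine gaps.

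First, the two-piece replacement $\tau_1(\sigma)+\tau_2(\sigma)$ is not enough. A lift $\wt\sigma$ can traverse \emph{several} copies of $\wt{X_1}$ and $\wt{X_2}$ in $\wt X$; if you retract to a single chosen copy of $\wt{X_i}$ to build $\tau_i$, the pieces of $S^{(K)}(\sigma)$ lying in the \emph{other} copies of $\wt{X_i}$ get pushed into $Z$ and are lost, so $S^{(K)}(\tau_1+\tau_2)\neq S^{(K)}(\sigma)$ in $C_*^{\lf}(X_1,Y_1)\oplus C_*^{\lf}(X_2,Y_2)$. The paper instead fixes a retraction $r_{\hat Z}:\wt X\to\hat Z$, assembles from it retractions $r_i:\wt X\to\hat{X_i}$, and defines $R(r_1,r_2,\sigma)=\sum_{i,j} r_i\circ H^i_j\circ\wt\sigma$, producing \emph{one simplex per copy} of $\wt{X_i}$ that $\wt\sigma$ meets. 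This many-piece construction is what makes $S^{(K)}(D(r_1,r_2,c))=S^{(K)}(c)$ automatic.

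Second, your appeal to Lemma~\ref{lemma_counting_edges_main} for the $ne$-norm bound is misplaced: that lemma is about the ordinary barycentric subdivision of a simplex with \emph{all} vertices in $Z$, and belongs to the local-subdivision machinery of Section~\ref{subsecton_local_barycentric}, not here. With many retracted pieces per simplex the norm bound needs a different idea. The paper uses the Bass--Serre tree $\mathcal T$ whose vertices are the copies of $\wt{X_i}$: a retracted piece $r_i\circ H^i_j\circ\wt\sigma$ lands in $ne(\pi^{-1}(Z))$ only if the corresponding vertex of $\mathcal T$ lies on \emph{every} geodesic edge of the associated tree-simplex, and for $k\ge 2$ at most one vertex of a tree can do this (Lemma~\ref{lemma_tree}). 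That is the source of $|D(r_1,r_2,c)|_1^{ne(Z)}\le|c|_1$.

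Finally, the diffusion is not over $\pi_1(Z)$ but over the larger amenable group $\bigoplus_{i}\bigoplus_{j\in\pi_1(X_i)/\pi_1(Z)}\pi_1(Z)$, which acts transitively on the sets $\mathcal R_i$ of retractions; the paper encodes the boundary defect as a finitely supported function on an auxiliary set $\mathcal U$ of pairs (lifted face, retraction), shows it sums to zero on each orbit (this is where the hypothesis that $c$ comes from a cycle relative to $Y\setminus Z$ is used), and then applies Corollary~\ref{corollary_Gromov_diffusion}.
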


\begin{proof}[Proof of Proposition \ref{prop_chain_decomposition}]
By Lemma \ref{lemma_Z_compatibility} we can assume $c$ is $Z$-compatible. Let $\varepsilon'>0$ and let $c'$ be a chain given by Lemma \ref{lemma_boundary_trick} for constant $K$ and $\varepsilon$. Let also $T$ be a chain homotopy between $S^{(K)}$ and $Id$. We claim that the chain
\[
c'':=c'+T((\partial c')|_{n(Y)})
\]
which lies in $C_*^{\lf}(X_1,Y_1)\oplus C_*^{\lf}(X_2,Y_2)$, is a finite approximation of $c$. Because adding to $c''$ a term from $C_*^{\lf}(Y_1)\oplus C_*^{\lf}(Y_2)$ will not affect neither the fact that it is a cycle nor that it is homologuous to $c$, we will check these properties for $c''' = c'+T\partial c'$.
\[
\partial c''' = \partial c' +\partial T\partial c' = \partial c' + S^{(K)}\partial c' - \partial c' -T\partial\partial c' = \partial S^{(K)} c' = \partial S^{(K)} c
\]
The last term is in $C_*^{\lf}(Y_1)\oplus C_*^{\lf}(Y_2)$, hence $c'''$ (and $c''$) is a cycle. We used the fact that $S^{(K)}$ is a chain operator and the properties of $c'$.

Now we check that $c'''$ is homologous to $c$:
\[
[c'''] = [S^{(K)}(c''')] = [S^{(K)}c' + S^{(K)}T\partial c'] = [S^{(K)}c' + S^{(2K)}c' - S^{(K)}c' - S^{(K)}\partial Tc'] = [S^{(2K)}c'] = [S^{(2K)}c] = [c].
\]

Finally, let us estimate the norm of $c''$.
\[
|c''|^{ne(Z)}_1 \leq |c'|^{ne(Z)}_1 + |T\partial (c'|_{n(Y)})|_1 \leq |c|_1 + \varepsilon' \|T\|.
\]
Because $\varepsilon'$ is arbitrary and $\|T\|$ depends only on $c$, we can set $\varepsilon' = \frac{\varepsilon}{\|T\|}$ and obtain
\[
|c''|^{ne(Z)}_1 \leq |c|_1 + \varepsilon.
\]
The components of $c''$ provide desired cycles.
\end{proof}

Now we turn to the proof of Lemma \ref{lemma_boundary_trick}, but first let us make a few remarks about the structure of $\wt{X}$, the universal covering of $X$. Because the fundamental groups $\pi_1(X_i)$ for $i=1,2$ and $\pi_1(Z)$ inject into $\pi_1(X)$, $\wt{X}$ is built from the copies of universal coverings of $X_i$ for $i=1,2$ glued along copies of universal coverings of $Z$. Let $\hat{X_i}$ for $i=1,2$ be distinguished copies of $\wt{X_i}$ inside $\wt{X}$, which intersect along a copy of $\wt{Z}$, which we denote by $\hat{Z}$. For $i=1,2$ we define also
\[
\mathcal{X}_i = \{\gamma \hat{X_i}\::\: \gamma\in\pi_1(X)\}.
\]
Because $\hat{Z}$ is contractible, there exists a retraction $r:\wt{X}\rightarrow \hat{Z}$. It can be constructed simply by sending $\wt{X}$ to a point and then by extending a homotopy between a constant map and the identity on $\hat{Z}$ to $\wt{X}$. Moreover, the group $\pi_1(Z)$ acts on the set of such retractions by
\[
(g\cdot r)(x) = g\cdot r(g^{-1}\cdot x),
\]
where $x\in \wt{X}$, $g\in\pi_1(Z)$ and $r:\wt{X}\rightarrow \hat{Z}$ is a retraction. We check that it is indeed an action.
\[
(g\cdot (h\cdot r))(x) = g\cdot (h\cdot r)(g^{-1}\cdot x) = gh\cdot r(h^{-1}g^{-1}\cdot x) = ((gh)\cdot r) (x).
\]
Having chosen one particular retraction $r_{\hat{Z}}$, we define retractions $\wt{X}\rightarrow \hat{X_i}$ for $i=1,2$ as follows. Choose some representatives of cosets of $\pi_1(X_i)/\pi_1(Z)$ and denote them by $(h^i_j)_{j\in\pi_1(X_i)/\pi_1(Z)}$. Let $\bar{X_i}$ be the connected component of $\wt{X}\setminus\hat{Z}$ that does not contain $\hat{X_i}$. Then the connected components of $\wt{X}\setminus\hat{X_i}$ are exactly $h^i_j\bar{X_i}$ for $j\in\pi_1(X_i)/\pi_1(Z)$ (see figure \ref{figure_universal_covering_tree}). Define
\[
r^{(h^i_j)_{j\in\pi_1(X_i)/\pi_1(Z)}}_{\hat{X_i}}(x) = \begin{cases} 
x & \text{ if } x\in \hat{X_i}; \\
h^i_j\cdot r_{\hat{Z}}((h^i_j)^{-1} x) & \text{ if } x\in h^i_j\bar{X_i}. 
\end{cases}
\]
We will denote by $\mathcal{R}_i$ the set of such retractions, i.e.
\[
\mathcal{R}_i := \{r^{(h^i_j)_{j\in\pi_1(X_i)/\pi_1(Z)}}_{\hat{X_i}}\::\: (h^i_j)_{j\in\pi_1(X_i)/\pi_1(Z)} \text{ are representatives of cosets of }\pi_1(X_i)/\pi_1(Z) \}
\]
Note that $\prod_{j\in\pi_1(X_i)/\pi_1(Z)}\pi_1(Z)$ acts transitively on $\mathcal{R}_i$ by
\[
(g_j)_{j\in\pi_1(X_i)/\pi_1(Z)}\cdot r^{(h^i_j)_{j\in\pi_1(X_i)/\pi_1(Z)}}_{\hat{X_i}} = r^{(h^i_j(g_j)^{-1})_{j\in\pi_1(X_i)/\pi_1(Z)}}_{\hat{X_i}}.
\]

\begin{figure}[h]
\centering
\includegraphics[width=0.5\textwidth]{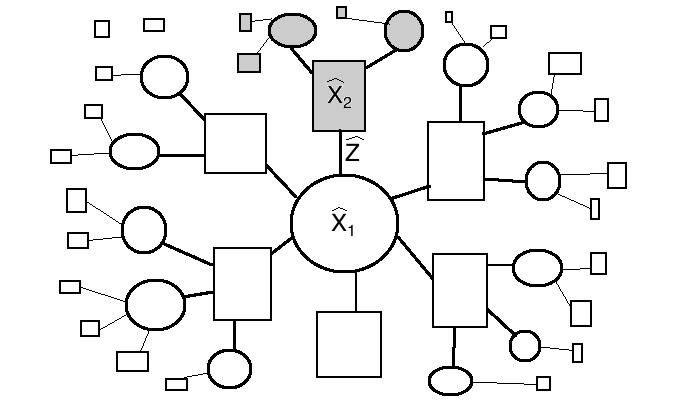}
\caption{Sketch picture of $\wt{X}$. Circles correspond to the copies of $\wt{X_1}$, squares to the copies of $\wt{X}_2$ and intervals joining them to the copies of $\wt{Z}$. Gray region represents $\bar{X}_1$, while white one-$\bar{X}_2$.}
\label{figure_universal_covering_tree}
\end{figure}

Let $(H^i_j)_{j\in\pi_1(X_i)\setminus\pi_1(X)}$ be some set of representatives for $\pi_1(X_i)\setminus\pi_1(X)$ (we will not change it during our considerations). In particular, copies of $\wt{X_i}$ in $\wt{X}$ are exactly $(H^i_j)^{-1}\hat{X_i}$. Define
\[
R:\mathcal{R}_1\times\mathcal{R}_2\times C_*(\wt{X})\rightarrow C_*(\wt{X}, \pi^{-1}(Z))
\]
simplex-wise as
\[
R(r_1,r_2,\sigma) = \sum_{i=1}^2\sum_{j\in\pi_1(X_i)\setminus\pi_1(X)} r_i\circ H^i_j\circ\sigma
\]
Note that because the image of the above map is in $C_*(\wt{X}, \pi^{-1}(Z))$, the sum on the right-hand side can be treated as finite because $r_i\circ H^i_j\circ\sigma\subset \pi^{-1}(Z)$ for almost all $j\in\pi_1(X_i)\setminus\pi_1(X)$. It is also clear that it is a chain map (for finite chains!).

\begin{lemma}\label{lemma_property_of_R}
For any $r_i\in\mathcal{R}_i$ for $i=1,2$, if $\sigma$ is $\pi^{-1}(Z)$-compatible and of dimension $k\geq 2$, at most one summand in $R(r_1,r_2,\sigma)$ has no edges in $\pi^{-1}(Z)$.
\end{lemma}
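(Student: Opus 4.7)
The plan is to identify the summands in $R(r_1,r_2,\sigma)$ with retractions of $\sigma$ onto the various copies of $\wt{X_1}$ and $\wt{X_2}$ sitting inside $\wt{X}$, and then use the tree structure of $\wt{X}$ to obstruct more than one such retraction from producing a simplex without edges in $\pi^{-1}(Z)$. Since $\pi_1(Z)$ embeds in both $\pi_1(X_1)$ and $\pi_1(X_2)$, $\pi_1(X)$ is the amalgamated free product $\pi_1(X_1)*_{\pi_1(Z)}\pi_1(X_2)$ and $\wt{X}$ is a tree of spaces over the corresponding Bass--Serre tree $T$: tree vertices are copies of $\wt{X_1}$ or $\wt{X_2}$ inside $\wt{X}$, and tree edges are the $\wt{Z}$-copies along which they are glued. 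The copies $(H^i_j)^{-1}\hat{X_i}$ run through all tree vertices of the appropriate type, so after absorbing the global translation by $H^i_j$ the summand $r_i\circ H^i_j\circ\sigma$ is (up to that translation) the image of $\sigma$ under the natural retraction $r^{(v)}\colon\wt{X}\to v$ onto a tree vertex $v$, the retraction that collapses every component $B$ of $\wt{X}\setminus v$ (a \emph{branch} from $v$) onto the boundary $\wt{Z}$-copy $\hat{Z}_B\subset v$ separating it from $v$.

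The key geometric step is the following: if two vertices $v_\ell, v_m$ of $\sigma$ have tree positions in the same closed branch $B\cup\hat{Z}_B$ from $v$, then the retracted edge $r^{(v)}(\overline{v_\ell v_m})$ lies entirely in $\hat{Z}_B\subset\pi^{-1}(Z)$. Indeed, both endpoints plainly land in $\hat{Z}_B$, and the edge itself cannot leave $B\cup\hat{Z}_B$ inside $\wt{X}$: doing so would force $\wt{v_\ell v_m}$ to visit the interior of $v$ (or of yet another tree vertex), producing an extra connected component of $\wt{v_\ell v_m}\cap\pi^{-1}(X\setminus Z)$ and violating the minimality clause of $\pi^{-1}(Z)$-compatibility. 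Since $r^{(v)}(B\cup\hat{Z}_B)\subseteq\hat{Z}_B$, the image edge sits in a single $\wt{Z}$-copy.

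Now suppose for contradiction that two distinct tree vertices $v\ne v'$ both produce summands in $ne(\pi^{-1}(Z))$. The previous step forces that for every branch $B$ of $v$ at most one vertex of $\sigma$ is tree-located inside $B\cup\hat{Z}_B$, and analogously for every branch of $v'$. Let $B$ be the branch of $v$ containing $v'$ and $B'$ the branch of $v'$ containing $v$, and partition the $k+1$ vertices of $\sigma$ by their tree location: at $v$, at $v'$, strictly between $v$ and $v'$ on the tree geodesic, in branches of $v$ off this geodesic, in branches of $v'$ off this geodesic. Every vertex of $\sigma$ except those at $v'$ and in off-path branches of $v'$ is counted toward the ``$B'$-count at most $1$'', while every vertex except those at $v$ and in off-path branches of $v$ is counted toward the ``$B$-count at most $1$''; adding these two inequalities and using that all counts are non-negative shows that $\sigma$ has at most $2$ vertices in total, contradicting $k+1\ge 3$.

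The main technical obstacle lies in the geometric step of the second paragraph: one must carefully check that $r_i$, defined through arbitrarily chosen coset representatives $(h^i_j)$, really does act as ``retract each branch onto its boundary $\wt{Z}$-copy'' independently of that choice, and one has to correctly account for vertices of $\sigma$ that already sit in $\pi^{-1}(Z)$, so that their tree location is a tree edge rather than a tree vertex, when setting up the partition used in the counting. Once these are handled cleanly, the rest is a short piece of bookkeeping on the tree, and the assumption $k\ge 2$ enters exactly at the end to force the contradiction $3\le k+1\le 2$.
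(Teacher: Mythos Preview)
Your proof is correct and follows essentially the same approach as the paper: both pass to the Bass--Serre tree of the splitting and use $\pi^{-1}(Z)$-compatibility to ensure that edges of $\sigma$ project to geodesics in the tree, then conclude by a short combinatorial argument on trees. The only difference is packaging: the paper isolates the combinatorial step as a separate lemma (Lemma~\ref{lemma_tree}: at most one tree vertex lies on every geodesic edge of a simplex with $k\geq 2$), proved via a tree-distance computation, whereas you prove the equivalent statement directly by your branch-covering count $(B\cup\hat Z_B)\cup(B'\cup\hat Z_{B'})=\wt X$ with at most one vertex of $\sigma$ in each half.
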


\begin{proof}
Consider a simplicial tree $\mathcal{T}$ with vertices indexed by copies of $\wt{X_i}$ for $i=1,2$ such that two vertices are joined by an edge if the corresponding copies of $\wt{X_1}$ and $\wt{X_2}$ are intersecting in $\wt{X}$ along some copy of $\wt{Z}$. Then for a simplex $\sigma\in C(\Delta^k, \wt{X})$ there is a corresponding simplex $\sigma'\in C(\Delta^k, \mathcal{T})$ with geodesic edges (by $\pi^{-1}(Z)$-compatibility). Note that if an edge of $\sigma$ is mapped by $r_i\circ H^i_j$ to an edge not contained in $\pi^{-1}(Z)$ then the corresponding edge in $\sigma'$ contains the vertex $(H^i_j)^{-1}\hat{X_i}$. In other words, $r_i\circ H^i_j\circ\sigma$ has no edges in $\pi^{-1}(Z)$ if all edges of  $\sigma'$ contain $(H^i_j)^{-1}\hat{X_i}$. On the other hand, by Lemma \ref{lemma_tree} there exists at most one vertex of $\mathcal{T}$ which is contained in all the edges of $\sigma'$. It follows that there can be only one summand in $R(r_1,r_2,\sigma)$ which has no edges in $\pi^{-1}(Z)$, q.e.d.
\end{proof}

\begin{lemma}\label{lemma_tree}
Let $\mathcal{T}$ be a simplicial tree and let $\sigma\in C(\Delta^k,{\mathcal{T}})$ be a simplex with geodesic edges. If $k\geq 2$ there is at most one vertex in $\mathcal{T}$ which is contained in all edges of $\sigma$.
\end{lemma}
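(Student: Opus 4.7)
The plan is to reduce the statement to the standard median (``tripod'') property of trees. Since $k\geq 2$, I can pick three distinct vertex-indices of $\Delta^k$, say $0,1,2$, and let $v_0, v_1, v_2 \in \mathcal{T}$ denote their images under $\sigma$. Write $e_{ij}$ for the geodesic edge of $\sigma$ joining $v_i$ and $v_j$. Any vertex of $\mathcal{T}$ that lies in every edge of $\sigma$ must in particular lie in the triple intersection $e_{01}\cap e_{02}\cap e_{12}$, so it suffices to show this intersection contains at most one point.

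The key step is the median property of trees: for any three points $v_0, v_1, v_2$ in $\mathcal{T}$ there is a unique point $m\in\mathcal{T}$ (the median) such that
\[
e_{01} = [v_0,m]\cup [m,v_1],\qquad e_{02}=[v_0,m]\cup[m,v_2],\qquad e_{12}=[v_1,m]\cup[m,v_2],
\]
and such that the three arms $[v_0,m], [v_1,m], [v_2,m]$ meet pairwise only at $m$. This follows immediately from the uniqueness of geodesics in a tree together with the absence of embedded loops: the geodesic $e_{01}$ and the geodesic $e_{02}$ share a common initial segment out of $v_0$, which must end at some first point $m$ where they split, and one checks directly that the geodesic from $v_1$ to $v_2$ is obtained by concatenating the remaining arms through $m$.

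Granting the median property, I compute $e_{01}\cap e_{02} = [v_0,m]$, and then intersecting with $e_{12} = [v_1,m]\cup[m,v_2]$ collapses this segment to the single point $\{m\}$, since the arms from $v_0$, $v_1$, and $v_2$ meet pairwise only at $m$. Thus $e_{01}\cap e_{02}\cap e_{12}=\{m\}$, and any vertex of $\mathcal{T}$ contained in all edges of $\sigma$ must equal $m$. The degenerate cases, for instance when two of $v_0, v_1, v_2$ coincide, only make the intersection smaller, so the conclusion persists.

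There is no real obstacle; the only care needed is to justify the median property cleanly in the simplicial-tree setting. If one wanted to avoid invoking it as a black box, an equivalent formulation is to argue directly by contradiction: two distinct vertices $p\neq q$ lying in all three edges would produce two distinct $v_0$-to-$v_1$ geodesic segments through $\{p,q\}$ inside the union $e_{01}\cup e_{02}$, contradicting the uniqueness of geodesics in the tree.
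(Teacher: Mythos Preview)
Your proof is correct and takes a different, more structural route than the paper's. You invoke the median (tripod) property of trees to show that the three pairwise geodesics among any three vertex-images of $\sigma$ meet in exactly one point, which immediately bounds the intersection of all edges. The paper instead argues directly by contradiction: assuming two distinct tree-vertices $v_1\neq v_2$ lie on every edge of $\sigma$, it picks two edges $e_1=[v'_1,v'_2]$ and $e_2=[v''_1,v''_2]$ of $\sigma$, arranges by symmetry that each traverses $v_1$ before $v_2$, and then considers the third edge $e_3=[v'_1,v''_1]$ joining an endpoint of $e_1$ to an endpoint of $e_2$. A short computation with distance additivity along geodesics then forces $d(v_1,v_2)=0$. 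Your approach is cleaner and names the relevant structural fact explicitly; the paper's is more elementary and self-contained, but pays for this with a small case distinction when choosing $e_3$. Both arguments rest on the same underlying tree geometry, and indeed the contradiction sketch you offer at the end of your proposal is close in spirit to what the paper actually does.
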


\begin{proof}
Assume that there are two such vertices, $v_1$ and $v_2$. Because $k\geq 2$, $\sigma$ has at least 3 edges. Let $e_1 = [v'_1, v'_2]$ and $e_2 = [v''_1, v''_2]$ be two edges of $\sigma$. Without loss of generality we assume that
\[
d_{\mathcal{T}}(v'_1, v'_2) = d_{\mathcal{T}}(v'_1,v_1)+ d_{\mathcal{T}}(v_1, v_2) + d_{\mathcal{T}}(v_2, v'_2)
\]
and 
\[
d_{\mathcal{T}}(v''_1, v''_2) = d_{\mathcal{T}}(v''_1,v_1)+ d_{\mathcal{T}}(v_1, v_2) + d_{\mathcal{T}}(v_2, v''_2).
\]
Consider an edge $e_3 = [v'_1, v''_1]$ (if $v'_1=v''_1$ and it is not an edge, then one can choose $[v'_2, v''_2]$). $v_1$ and $v_2$ are contained in $e_3$, hence
\[
d_{\mathcal{T}}(v'_1,v''_1) = d_{\mathcal{T}}(v'_1, v_2) + d_{\mathcal{T}}(v_2, v''_1) = d_{\mathcal{T}}(v'_1, v_1) + 2d_{\mathcal{T}}(v_1, v_2) + d_{\mathcal{T}}(v_1, v''_1) = d_{\mathcal{T}}(v'_1,v''_1) + 2d_{\mathcal{T}}(v_1,v_2).
\]
We conclude that $d_{\mathcal{T}}(v_1,v_2)=0$, hence $v_1=v_2$.
\end{proof}

\begin{proof}[Proof of Lemma \ref{lemma_boundary_trick}]
Let $\theta:C(\Delta^k, X)\rightarrow C(\Delta^k, \wt{X})$ be any section of the map induced by the canonical projection $\pi:\wt{X}\rightarrow X$. Take any $r_i\in\mathcal{R}_i$ for $i=1,2$ and consider a chain
\[
D(r_1,r_2, c) = \sum_{\sigma\in\supp(c)} c(\sigma) \pi\circ R(r_1,r_2,\theta(\sigma)),
\]
which lies in $C^{\lf}_*(X_1,Y_1)\oplus C^{\lf}_*(X_2,Y_2)$. $D(r_1,r_2,c)$ is not a cycle in general, because the construction depends on $\theta$. However, it is a finite modification of $c$ because taking $\theta$ and $R$ does nothing to the simplices that do not intersect $Z$ and $Z$ is compact. By Lemma \ref{lemma_property_of_R}, $|D(r_1,r_2,c)|^{ne(Z)}_1 \leq |c|_1$. Moreover, if $K$ is a constant such that $S^{(K)}(c)\in C^{\lf}_*(X_1, Y_1)\oplus C^{\lf}_*(X_2, Y_2)$ (which is given by $Z$-compatibility of $c$) then
\[
S^{(K)}(D(r_1,r_2,c)) = D(r_1,r_2,S^{(K)}(c)) = S^{(K)}(c)
\]
in $C^{\lf}_*(X_1, Y_1)\oplus C^{\lf}_*(X_2, Y_2)$. Note also that every element of $\partial D(r_1,r_2,c)|_{n(Y)}$ has non-empty intersection with $Z$, hence $\partial D(r_1,r_2,c)|_{n(Y)}$ has finite support. In particular, $D(r_1,r_2,c)$ satisfies almost all of the requirements postulated by Lemma \ref{lemma_boundary_trick} except that the norm of $\partial D(r_1,r_2,c)|_{n(Y)}$ may be large. However, we can average $D(r_1,r_2,c)$ by the action of $\bigoplus_{i=1}^2\bigoplus_{j\in\mathbb{N}}\pi_1(Z)$ and use Proposition \ref{prop_Gromov_diffusion} to minimize this norm. To describe it precisely, we need more machinery.

For $\sigma\in C(\Delta^k, X)$ and $i=1,2$ consider the sets
\[
\mathcal{U}^i(\sigma) = \{(\wt{\sigma}, r) \::\: \wt{\sigma}\in C(\Delta^{k-1}, \wt{X}) \text{ is a lift of }\sigma,\; r\in\mathcal{R}_i,\; \wt{\sigma}\cap (\hat{X_i}\setminus \pi^{-1}(Z))\neq\emptyset \} / \sim
\]
where $(\tau, r)\sim (\tau',r')$ if there exists $g\in \pi_1(X_i)$ such that $\tau = g\cdot \tau'$ and $r|_{\wt{X}^i_{\tau}} = g\cdot r' \cdot g^{-1}|_{\wt{X}^i_{\tau}}$, where $\wt{X}^i_{\tau}$ is a union of components of $\wt{X}\setminus \hat{X_i}$ with non-empty intersection with $\tau$ (see Figure \ref{figure_set_U}):
\[
\wt{X}^i_{\tau} := \bigcup_{h^i\in \pi_1(X_i)/\pi_1(Z):\: h^i\bar{X_i}\cap\tau\neq\emptyset} h^i\bar{X_i}.
\]
\begin{figure}[h]
\centering
\includegraphics[width=0.5\textwidth]{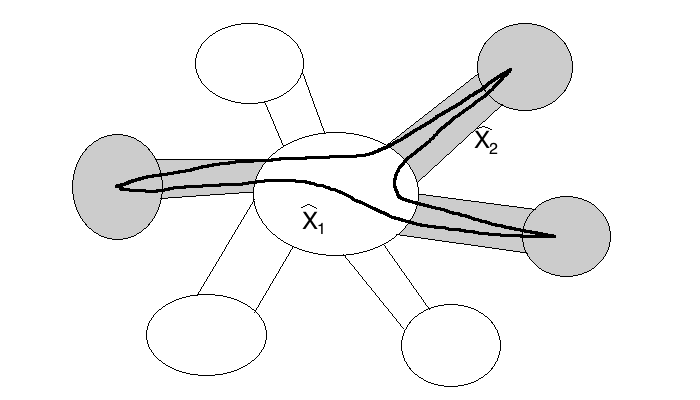}
\caption{The visible simplex is $\tau$, grey region corresponds to $\wt{X}^1_{\tau}$.}
\label{figure_set_U}
\end{figure}
Note that the sum on the right hand side of the above definition is finite. In particular, for every element $[\tau, r]\in \mathcal{U}^i(\sigma)$ the simplex $\pi\circ r(\tau)\in C(\Delta^k, X)$ is well defined. We have also an action of $\prod_{j\in\pi_1(X_i)/\pi_1(Z)}\pi_1(Z)$ on $\mathcal{U}^i(\sigma)$ defined as $A\cdot [\tau, r] = [\tau, A\cdot r]$. Now consider the sets
\[
\mathcal{S}:= \{\partial_l\sigma\::\: \sigma\in\supp(c),\, \partial_l\sigma\cap Z\neq\emptyset,\, l=0,...,k\}
\]
and
\[
\mathcal{U}:=\bigcup_{i=1}^2\bigcup_{\sigma\in\mathcal{S}}\mathcal{U}^i(\sigma).
\]
Consider also the following function $f:\mathcal{U}\rightarrow \mathbb{R}$. For $\sigma\in\supp(c)$, $l\in\{0,..,k\}$ such that $\partial_l\sigma\cap Z\neq\emptyset$ and $H^i_j$ such that $H^i_j\cdot\partial_l\theta(\sigma)\cap \hat{X}_i\neq\emptyset$ define
\[
f_{\sigma}([H^i_j\partial_l\theta(\sigma),r_i]) = (-1)^l c(\sigma)
\]
and $f_{\sigma}([\tau, r]) = 0$ for all other $[\tau, r]\in\mathcal{U}$. Finally, define
\[
f = \sum_{\sigma\in\supp(c)} f_{\sigma}.
\]
Note that $f$ is finitely supported. Indeed, the above sum is finite because $f_{\sigma}$ can be non-zero only if $\sigma\cap Z\neq \emptyset$ and every $f_{\sigma}$ is finitely supported because there are only finitely many $H^i_j$ such that $H^i_j\theta(\sigma)\cap \hat{X_i}\neq\emptyset$. The importance of $f$ is reflected by the fact that by the above construction we have
\[
(\partial D(c,r_1,r_2))|_{n(Y)}(\tau) = \sum_{\{[\tau',r]\in\mathcal{U} \::\: \pi(r(\tau')) = \tau  \}} f([\tau',r])
\]
for $\tau\in C(\Delta^{k-1}, X)$. Therefore
\[
|\partial (D(c, r_1, r_2)|_{n(Y)})|_1\leq \|f\|_1.
\]
Moreover, the above relations are invariant under the action of
\[
G:= \bigoplus^2_{i=1}\bigoplus_{j\in\pi_1(X_i)/\pi_1(Z)}\pi_1(Z),
\]
i.e. for $(g_1,g_2)\in G$ we have
\[
(\partial D(c,g_1\cdot r_1,g_2\cdot r_2))|_{n(Y)}(\tau) = \sum_{\{[\tau',r]\in\mathcal{U} \::\: \pi(r(\tau')) = \tau  \}} f((g_1,g_2)\cdot[\tau',r]).
\]
The final ingredient in the proof is the following claim.

\emph{Claim}: For every orbit $U$ of $G$-action on $\mathcal{U}$ we have $|\sum_{u\in U} f(u)|=0$.

Assuming that the above claim is true, let $\varepsilon>0$ and let $\mu\in\ell^1(G)$ be a probability measure given by Proposition \ref{prop_Gromov_diffusion} for the above function $f$ and the action of $G$. Consider the chain
\[
D'(c) := \mu*D(r_1,r_2, c) =  \sum_{(g_1, g_2)\in G} \mu((g_1,g_2))\cdot D(g_1\cdot r_1, g_2\cdot r_2, c).
\]
It is a finite convex linear combination of chains of the form $D(r'_1, r'_2, c)$ for $r'_i\in\mathcal{R}_i$ for $i=1,2$ hence it is obvious that it is a finite modification of $c$, $S^{(K)}D'(c) = S^{(K)}(c)$, $|D'(c)|^{ne(Z)}_1\leq |c|_1$ and $\supp((\partial D'(c))|_{n(Y)})$ is finite. Moreover, it follows that
\[
|(\partial D'(c))|_{n(Y)}|_1\leq \|\mu*f\|_1 \leq \varepsilon,
\]
hence the chain $D'(c)$ satisfies the required conditions.

\emph{Proof of the claim}. For a simplex $\sigma\in\mathcal{S}$ let $(\sigma_1,l_1) ,...,(\sigma_t, l_t)$ be the complete list of pairs of simplices in $\supp(c)$ and numbers in $\{0, ... ,k\}$ such that $\sigma = \partial_{l_s}\sigma_s$ for $s=1,...,t$. From the fact that $c$ is an image of a cycle in $C_*^{\lf}(X, Y\setminus Z)$ and $\sigma\in n(Y\setminus Z)$ it follows that
\[
\sum_{s=1}^t c(\sigma_s)(-1)^{l_s}=(\partial c)(\sigma) = 0.
\]
Let $H^i_{j_1},...,H^i_{j_t}\in \pi_1(X)$ be representatives of cosets $\pi_1(X_i)\setminus \pi_1(X)$ such that $H^i_{j_s}\partial_{l_s}\theta(\sigma_s)$ are in the same $\pi_1(X_i)$-orbit. In particular, because $\hat{X_i}$ and $\pi^{-1}(Z)$ are invariant under the action of $\pi_1(X_i)$, either all or none of the simplices $H^i_{j_s}\partial_{l_s}\theta(\sigma_s)$ for $s=1,...,t$ have non-empty intersection with both $\pi^{-1}(Z)$ and $\hat{X_i}$. It follows that
\[
\sum_{s=1}^t c(\sigma_s)(-1)^{l_s} f_{\sigma}([H^i_{j_s}\partial_{l_s}\theta(\sigma_s), r_i]) = 0.
\]
It suffices to show that $[\partial_{l_s}H^i_{j_s}\theta(\sigma_s), r_i]\in \mathcal{U}^i(\sigma)$ for $s=1,...,t$ are in the same $G$-orbit. We will prove that for any $r,r'\in\mathcal{R}_i$, $\tau\in C(\Delta^k, \wt{X})$ and $\gamma\in \pi_1(X_i)$, the elements $[\tau, r], [\gamma\tau, r']\in \mathcal{U}^i(\pi(\tau))$ are in the same $\bigoplus_{j\in\pi_1(X_i)/\pi_1(Z)}\pi_1(Z)$-orbit. Let $h^i_j\in \pi_1(X_i)$ for $j\in\pi_1(X_i)/\pi_1(Z)$ be some representatives of cosets in $\pi_1(X_i)/\pi_1(Z)$. Then by the definition of $\mathcal{R}_i$ for every $j\in\pi_1(X_i)/\pi_1(Z)$
\[
r|_{h^i_j\bar{X}_i} = (h_j a^i_j)\circ r_{\hat{Z}}\circ ((a_j^i)^{-1}(h^i_j)^{-1})
\]
for some $a^i_j\in\pi_1(Z)$. Similarly,
\[
(\gamma^{-1} r' \gamma)|_{h^i_j\bar{X}_i} = (h^i_j b_j^i)\circ r_{\hat{Z}}\circ ((b_j^i)^{-1}(h^i_j)^{-1})
\]
for some $b^i_j\in\pi_1(Z)$. Therefore for every finite subset $J\subset \pi_1(X_1)/\pi_1(Z)$ and
\[
A := \bigoplus_{j\in J} (a_j^i(b_j^i)^{-1}) \in \bigoplus_{j\in\pi_1(X_i)/\pi_1(Z)} \pi_1(Z)
\]
we have $A\cdot r = \gamma^{-1}r'\gamma$ on $\bigcup_{j\in J}h^i_j\bar{X}_i$, hence $[\tau, A\cdot r]\sim [\gamma\tau, r']$ for sufficiently large $J$. This finishes the proof of the claim.
\end{proof}

\begin{remark}\label{remark_weakening}
In the above considerations, we assumed that $Z$ is compact and aspherical. However, the proof works equally well with slightly weaker assumptions, namely:
\begin{itemize}
\item there are only finitely many simplices $\sigma\in\supp(c)$ which intersect $Z$;
\item there exists a cellular retraction $\wt{X}\rightarrow \hat{Z}$.
\end{itemize}
\end{remark}

\section{Superadditivity of the locally finite simplicial volume II}\label{section_higher_dim_trick}
In this section we modify slightly Proposition \ref{prop_chain_decomposition} and state the following.

\begin{prop}\label{prop_higher_dimensional}
Let $(X_1,Y_1)$ and $(X_2,Y_2)$ be two pairs of $CW$-complexes such that all connected components $Y'_i$ of $Y_i$ for $i=1,2$ are compact and $\im(\pi_1(Y'_i)\rightarrow \pi_1(X_i))$ are amenable. Let $Z_i\subset Y_i$ for $i=1,2$ be two connected components such that there is a homeomorphism $f:Z_1\rightarrow Z_2$. Assume moreover that $\pi_j(Z_1)=0$ for $j=2,...,k-2$ where $k\geq 2$ and $\pi_1(Z_1)$ injects both into $\pi_1(X_1)$ and $\pi_1 (X_2)$. Finally, let $c\in C_k^{\lf}(X_1\cup_f X_2, (Y_1\setminus Z_1)\cup (Y_2\setminus Z_2))$ be a cycle. Then for every $\varepsilon>0$ there exist cycles $c_1\in C^{\lf}_k(X_1, Y_1)$ and $c_2\in C^{\lf}_k(X_2, Y_2)$ which are finite approximations of restrictions of $c$ to $C_k^{\lf}(X_1\cup_f X_2, Y_1\cup X_2)$ and $C_k^{\lf}(X_1\cup_f X_2, X_1\cup Y_2)$ respectively, such that
\[
|c_1|^{ne(Z)}_1+|c_2|^{ne(Z)}_1 \leq |c|_1+\varepsilon.
\]
In particular, $\|[c_1]\|_1+\|[c_2]\|_1 \leq \|[c]\|_1$ by Proposition \ref{prop_amenable_subset}.
\end{prop}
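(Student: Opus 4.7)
The plan is to mirror the proof of Proposition~\ref{prop_chain_decomposition}, exploiting Remark~\ref{remark_weakening}: the only ingredient there that truly required asphericity of $Z$ was the existence of a cellular retraction $\wt{X}\to\hat{Z}$. Under the weaker hypothesis $\pi_j(Z_1)=0$ for $j=2,\dots,k-2$, the universal cover $\hat{Z}$ is only $(k-2)$-connected, so obstruction theory produces such a cellular retraction only up to the $(k-1)$-skeleton of $\wt{X}$; the role of the ``higher dimensional cell trick'' is to close this remaining one-dimensional gap.

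First I would apply Proposition~\ref{prop_amenable_subset} successively to each connected component of $(Y_1\setminus Z_1)\cup(Y_2\setminus Z_2)$, all of which have amenable $\pi_1$-image by the additional hypothesis of this proposition. This replaces $c$ by a finite approximation whose $\ell^1$-mass on simplices having an edge in those other boundary components is at most $\varepsilon/3$; this is the step where the new amenability assumption enters. Using Lemma~\ref{lemma_Z_compatibility} I would then also make the resulting cycle $Z$-compatible, so that only finitely many of its simplices meet $Z$.

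The key technical step is then to enlarge $X_i$ to $X_i^{\sharp}$ by attaching a finite collection of $k$-cells along maps $S^{k-1}\to\hat{Z}$ that kill the obstructions in $\pi_{k-1}(\hat{Z})$ coming from the (finitely many) simplices of $c$ that meet $Z$. Choosing these attaching maps $\pi_1(Z)$-equivariantly and matching them on the two sides of $Z$, the cells can be paired across $Z$ so that the gluing $X_1^{\sharp}\cup_f X_2^{\sharp}$ is well defined and $c$ remains a cycle (up to a boundary supported in $Z$). Inside $X^{\sharp}$, the enlargement $Z^{\sharp}\supset Z$ becomes $(k-1)$-connected, so the cellular retraction extends to the $k$-skeleton of $\widetilde{X^{\sharp}}$; applying Remark~\ref{remark_weakening} together with the argument of Proposition~\ref{prop_chain_decomposition} to $c$ in $X^{\sharp}$ yields a splitting $c_1^{\sharp}+c_2^{\sharp}$ with the required $\ell^1$-bound.

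Finally, any $k$-simplex of $c_i^{\sharp}$ meeting the added cells lies entirely in $Z^{\sharp}$, and since these cells deformation-retract onto their attaching spheres in $Z$, they can be pushed into $Z\subset Y_i$ via Lemma~\ref{lemma_local_chain_homotopy} without increasing the $ne(Z)$-part of the norm, producing $c_i\in C_k^{\lf}(X_i,Y_i)$ with the required estimate. The main obstacle I anticipate is the bookkeeping in the enlargement step: arranging the attaching $(k-1)$-spheres $\pi_1(Z)$-equivariantly so that the two-sided gluing $X^{\sharp}$ is well defined, the amenability of all other boundary components survives the enlargement, and the final collapse of the added cells back into $X_i$ preserves the desired inequality on $|c_i|_1^{ne(Z)}$.
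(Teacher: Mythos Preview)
Your overall strategy---enlarge $Z$ by attaching higher cells so that Remark~\ref{remark_weakening} applies, run the argument of Proposition~\ref{prop_chain_decomposition} in the enlarged space, and then remove the added cells---is exactly the paper's. The fatal gap is in the last step.

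The sentence ``these cells deformation-retract onto their attaching spheres in $Z$'' is false: a disc $D^n$ does not retract onto $S^{n-1}$, so you cannot simply push simplices off the added cells by Lemma~\ref{lemma_local_chain_homotopy}. The preceding sentence is also unjustified: in the construction of Proposition~\ref{prop_chain_decomposition} a simplex $\sigma$ crossing $Z$ is replaced by $r_i\circ H^i_j\circ\theta(\sigma)$, whose part originally in $\hat{X}_i$ is untouched while the rest is pushed by $r_i$ into $\hat{Z}^{\sharp}$---so the resulting simplex typically has one foot in $X_i\setminus Z$ and another in an added cell, and does \emph{not} lie entirely in $Z^{\sharp}$.

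Removing the added cells is in fact the substantive content of this section. The paper does it via Lemma~\ref{lemma_higher_dim_trick}: for each added cell $D^n$ one chooses a regular interior point $p$, arranges (Lemma~\ref{lemma_point_on_edge}) that $p$ is hit only along edges with the surrounding simplices in ``biconical'' form, and then uses a diffusion argument over $\pi_{n-1}(S^{n-1})$ (Lemma~\ref{lemma_sphere_diffusion}) to replace those simplices by ones missing $p$, up to a boundary defect of norm $\leq\varepsilon'$ which is absorbed exactly as in the proof of Proposition~\ref{prop_chain_decomposition}. The extra hypothesis that every component of $Y_i$ has amenable $\pi_1$-image is consumed inside this lemma---it is what allows one to work with $|\cdot|_1^{ne(Y')}$ throughout and then recover an honest $\ell^1$-seminorm bound via Proposition~\ref{prop_amenable_subset}---rather than as a preliminary cleanup of $c$ as you propose.
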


It follows directly from the following lemma.

\begin{lemma}\label{lemma_higher_dim_trick}
Let $k\geq 3$ and let $X$ be a CW-complex with a subcomplex $Y\subset X$ such that all connected components $Y''$ of $Y$ are compact and $\im(\pi_1(Y'')\rightarrow \pi_1(X))$ are amenable. Let $n\geq k$, let $f:S^{n-1}\rightarrow Y$ be a cellular map and let $c\in C^{\lf}_k(X\cup_f D^n,Y\cup_f D^n)$. Then for every $\varepsilon>0$ there exists $Y'\subset Y\cup_f D^n$ which is a finite sum of connected components of $Y\cup_f D^n$ and a finite approximation $c'\in C^{\lf}_k(X,Y)$ of $c$ such that
\[
|c'|^{ne(Y')}\leq |c|^{ne(Y')}_1 + \varepsilon.
\]
In particular, by Proposition \ref{prop_amenable_subset}
\[
\|[c]\|_1^{(X,Y)} = \|[c]\|_1^{(X\cup_f D^n, Y\cup_f D^n)}.
\]
\end{lemma}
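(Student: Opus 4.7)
The strategy is to construct $c'$ by pushing the simplices of $c$ that meet $\mathrm{int}(D^n)$ off of $D^n$ into $X$, exploiting the contractibility of $D^n$. Since $c$ is locally finite and $D^n$ is compact, only finitely many simplices of $c$ intersect $D^n$. I would let $Y'$ be the union of all connected components of $Y\cup_f D^n$ that are met by one of these finitely many simplices, together with the component $Y_0\cup_f D^n$ containing the cell itself, where $Y_0\subset Y$ is the component containing $f(\partial D^n)$. By hypothesis each component of $Y\cup_f D^n$ is compact, so $Y'$ is a finite union of compact components; I fix a basepoint $q\in f(\partial D^n)\subset Y_0\subset Y'$.

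I would then construct, inductively on dimension, a compatible family of homotopies $H^\sigma_k:\Delta^k\times I\to X\cup_f D^n$ as in Lemma \ref{lemma_local_chain_homotopy}. For simplices $\sigma$ disjoint from $\mathrm{int}(D^n)$, $H^\sigma_k$ is the constant homotopy. For simplices meeting $\mathrm{int}(D^n)$, the homotopy implements, on the $\sigma^{-1}(D^n)$-portion, a contraction of $D^n$ to $q$, while leaving the rest untouched; at each inductive step, the already-defined homotopy on $\partial\Delta^k\times I$ is extended to $\Delta^k\times I$ using the contractibility of $D^n$. The dimensional hypothesis $n\geq k$ is used precisely here, to ensure that the extension at time~$1$ can be chosen to avoid $\mathrm{int}(D^n)$ entirely. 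Crucially, the image of the contraction is kept inside $D^n\cup\{q\}\subset Y'$, so any edge of $\sigma$ originally lying in $Y'$ (in particular, any edge in $D^n$) maps under $H^\sigma_k(\cdot,1)$ to an edge in $Y_0\cup\{q\}\subset Y'$: thus $\sigma\in e(Y')$ implies $H^\sigma_k(\cdot,1)\in e(Y')$.

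Applying Lemma \ref{lemma_local_chain_homotopy} to this system together with the compact set $D^n$ produces a cycle $c'=\sum_\sigma c(\sigma)\cdot H^\sigma_k(\cdot,1)$ that is homologous to $c$ and equal to $c$ on the infinitely many simplices disjoint from $D^n$, hence a finite approximation of $c$. Moreover $c'\in C^{\lf}_k(X,Y)$, since no simplex of $c'$ enters $\mathrm{int}(D^n)$, and the bound $|c'|^{ne(Y')}\leq |c|^{ne(Y')}+\varepsilon$ follows from the edge-preservation: only images of simplices in $ne(Y')$ can contribute to $|c'|^{ne(Y')}$, and the triangle inequality controls the effect of several simplices being pushed to a common target. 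In fact one can arrange $\varepsilon=0$, but a harmless averaging or perturbation step may be added if needed. The ``in particular'' statement follows by applying Proposition \ref{prop_amenable_subset} with $Z=Y'$: combining $|c'|^{ne(Y')}\leq |c|_1+\varepsilon$ with the amenability and compactness of $Y'$ yields $\|[c]\|_1^{(X,Y)}\leq \|[c]\|_1^{(X\cup_f D^n,Y\cup_f D^n)}$, and the reverse inequality is immediate from the inclusion.

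The main obstacle is the inductive construction of the homotopies $H^\sigma_k$ respecting simultaneously (i) compatibility with face maps, (ii) preservation of the ``edge in $Y'$'' property, (iii) complete avoidance of $\mathrm{int}(D^n)$ at time~$1$, and (iv) fixing simplices disjoint from $\mathrm{int}(D^n)$. Condition (iii) is where the hypothesis $n\geq k$ enters decisively: for $n<k$, the combinatorial obstruction to pushing top-dimensional pieces of a $k$-simplex out of the $n$-cell does not vanish, so the construction would require a substantially different argument. The amenability of the $\pi_1$-images of the boundary components is not used in constructing $c'$ itself but only in converting the $ne(Y')$-bound into a bound on $\|[c]\|_1$ via Proposition \ref{prop_amenable_subset}, so the asphericity assumption of Proposition \ref{prop_chain_decomposition} is here replaced by a weaker amenability-of-boundary hypothesis.
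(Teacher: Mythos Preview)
Your proposal has a genuine gap in the critical case $k=n$, which is the entire content of the lemma (the paper dispatches $k<n$ in one line by cellular approximation, exactly as you suggest). The claim that ``the dimensional hypothesis $n\geq k$ \dots\ ensures that the extension at time~$1$ can be chosen to avoid $\mathrm{int}(D^n)$ entirely'' is false when $k=n$. Suppose the homotopies $H^{\partial_i\sigma}_{n-1}$ on the faces have been built, so that $H^\sigma_n(\cdot,1)|_{\partial\Delta^n}$ lands in $X$. Any extension to $\Delta^n$ determines a relative class in $H_n(X\cup_f D^n, X)\cong H_n(D^n,\partial D^n)\cong\mathbb{Z}$, and this local degree onto the cell is invariant under homotopy rel $\partial\Delta^n$. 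If it is nonzero --- as it will be for a simplex that covers $D^n$ with nonzero degree --- there is no choice of $H^\sigma_n(\cdot,1)$ avoiding $\mathrm{int}(D^n)$. (A related issue: your ``contraction of $D^n$ to $q$'' cannot be spliced with the identity on $\sigma^{-1}(X\setminus\mathrm{int}(D^n))$, since the two homotopies disagree on $\sigma^{-1}(\partial D^n)$; there is no retraction $X\cup_f D^n\to X$ in general.)

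The paper's proof for $k=n$ is accordingly much more elaborate and does \emph{not} attempt to push simplices off the whole cell. Instead it removes a single interior point $p$: after smoothing and Sard's theorem, $p$ is chosen regular for all simplices; a sequence of local homotopies moves all preimages of $p$ onto edges and arranges the chain to be \emph{biconical} near $p$ (Lemma~\ref{lemma_point_on_edge}); then the biconical pieces $\beta(c_e)$ are replaced by $\eta$-type pieces built from a chain $c'_e$ produced by the sphere-diffusion Lemma~\ref{lemma_sphere_diffusion}, which exploits the amenability of $\pi_{n-1}(S^{n-1})\cong\mathbb{Z}$ to make the residual boundary $\varepsilon$-small. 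The resulting chain avoids $p$, and a final correction with the chain homotopy $T$ between $S^{(N)}$ and $\mathrm{Id}$ makes it a genuine cycle with the required norm bound. None of this machinery is visible in your proposal, and it cannot be replaced by a simple compatible system of homotopies.
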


\begin{proof}[Proof of Proposition \ref{prop_higher_dimensional}]
Let $\bar{Z}$ be an aspherical space made from $Z_1$ by gluing some cells of dimension $\geq k$. Then by Proposition \ref{prop_chain_decomposition} and Remark \ref{remark_weakening} for every $\varepsilon'>0$ there exist cycles $c_1\in C_k^{\lf}(X_1\cup \bar{Z}, Y_1\cup\bar{Z})$ and  $c_2\in C_k^{\lf}(X_2\cup_f \bar{Z}, Y_2\cup_f\bar{Z})$ which are finite approximations of restrictions of $c$ to $C_k^{\lf}(X_1\cup \bar{Z}\cup_f X_2, Y_1\cup \bar{Z}\cup_f X_2)$ and  $C_k^{\lf}(X_1\cup \bar{Z}\cup_f X_2, X_1\cup \bar{Z}\cup_f Y_2)$ respectively such that
\[
|c_1|_1^{ne(\bar{Z})} + |c_2|_1^{ne(\bar{Z})} \leq |c|_1 +\varepsilon'.
\]
However, note that simplices in $\supp(c_1)$ and $\supp(c_2)$ can be easily modified such that they intersect only finitely many cells of $\bar{Z}$. Therefore we can assume that $\bar{Z}$ is made from $Z_1$ by gluing only finitely many cells. Now it suffices to prove the statement by induction on the number of added cells, which is obvious by Lemma \ref{lemma_higher_dim_trick}.
\end{proof}

The rest of this section is devoted to the proof of Lemma \ref{lemma_higher_dim_trick}. We start with the following two technical lemmas.

\begin{lemma}\label{lemma_amenable_inequality}
Let $G$ be a finitely generated amenable group with a set of generators $S$, let $\varepsilon>0$ and let $\mu\in \ell^1(G)$ be a finitely supported probability measure such that
\[
\|\mu-s\cdot\mu\|_1\leq \varepsilon
\]
for every $s\in S$. Let $\rho = \sum^n_{i=1} a_i g_i\in\mathbb{R}[G]$ be an element such that $\sum^n_{i=1} a_i = 0$. Then
\[
\|\rho\cdot \mu\|_1 \leq \varepsilon\cdot |\rho|_1\cdot \sup_i |g_i|,
\]
where $|g|$ is a word length of $g\in G$ with respect to $S$.
\end{lemma}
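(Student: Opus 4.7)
\medskip

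\noindent\textbf{Proof plan for Lemma \ref{lemma_amenable_inequality}.}

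The key observation is that the hypothesis $\sum_i a_i=0$ lets us replace $\rho\cdot\mu$ by a \emph{difference} of translates of $\mu$, after which almost-invariance of $\mu$ under the generators $S$ does the rest. Concretely, since $\bigl(\sum_i a_i\bigr)\mu=0$, I would write
\[
\rho\cdot\mu \;=\; \sum_{i=1}^n a_i\, g_i\cdot\mu \;=\; \sum_{i=1}^n a_i\,(g_i\cdot\mu-\mu),
\]
so that by the triangle inequality in $\ell^1(G)$,
\[
\|\rho\cdot\mu\|_1 \;\leq\; \sum_{i=1}^n |a_i|\cdot \|g_i\cdot\mu-\mu\|_1.
\]
Thus everything reduces to showing $\|g\cdot\mu-\mu\|_1\leq \varepsilon\cdot |g|$ for each $g\in G$.

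To prove this, I would write $g=s_1 s_2\cdots s_m$ as a minimal word in $S\cup S^{-1}$, so $m=|g|$, and telescope:
\[
g\cdot\mu-\mu \;=\; \sum_{j=1}^{m}\bigl(s_1\cdots s_j\cdot\mu - s_1\cdots s_{j-1}\cdot\mu\bigr) \;=\; \sum_{j=1}^{m} (s_1\cdots s_{j-1})\cdot(s_j\cdot\mu-\mu).
\]
Because left multiplication by any group element is an isometry of $\ell^1(G)$,
\[
\|g\cdot\mu-\mu\|_1 \;\leq\; \sum_{j=1}^{m}\|s_j\cdot\mu-\mu\|_1.
\]
Each term is bounded by $\varepsilon$: for $s_j\in S$ this is the hypothesis directly, and for $s_j=s^{-1}$ with $s\in S$ we use $\|s^{-1}\cdot\mu-\mu\|_1 = \|s^{-1}(\mu - s\cdot\mu)\|_1 = \|\mu-s\cdot\mu\|_1\leq\varepsilon$, again by left-isometry. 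Hence $\|g\cdot\mu-\mu\|_1\leq m\varepsilon=|g|\cdot\varepsilon$, and combining with the first displayed inequality,
\[
\|\rho\cdot\mu\|_1 \;\leq\; \sum_{i=1}^n |a_i|\cdot |g_i|\cdot\varepsilon \;\leq\; \varepsilon\cdot|\rho|_1\cdot\sup_i|g_i|,
\]
which is exactly the claim.

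There is essentially no obstacle here; the only subtlety is being careful that the chosen convention for the $\mathbb{R}[G]$-action on $\ell^1(G)$ (convolution in the sense of Proposition \ref{prop_Gromov_diffusion}) is an isometric action, so that the telescoping argument goes through. Since convolution by a group element is an isometry of $\ell^1(G)$, the argument above is independent of whether one works with left- or right-convolution conventions.
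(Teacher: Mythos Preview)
Your proof is correct and follows essentially the same approach as the paper: telescope $g\cdot\mu-\mu$ along a word for $g$ to get $\|g\cdot\mu-\mu\|_1\leq\varepsilon\,|g|$, then use $\sum_i a_i=0$ to rewrite $\rho\cdot\mu=\sum_i a_i(g_i\cdot\mu-\mu)$ and apply the triangle inequality. If anything you are slightly more careful than the paper, since you explicitly treat the case $s_j\in S^{-1}$ via the isometry of left translation.
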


\begin{proof}
Let $g\in G$. Then $g=s_1\cdot ... \cdot s_{|g|}$ for some $s_j\in S$, $j=1,...,|g|$. We have
\[
\|\mu-g\cdot\mu\|_1\leq \sum^{|g|}_{j=1} \|(s_1\cdot ... \cdot s_{j-1})(\mu- s_j\cdot \mu)\|_1 \leq \varepsilon\cdot |g|.
\]
Using the above inequality, we compute
\[
\|\rho\cdot\mu\|_1 = \|\rho\cdot\mu - \sum^n_{i=1} a_i\mu\|_1 = \|\sum^n_{i=1} a_i(g_i\cdot\mu - \mu)\|_1 \leq \sum^n_{i=1} |a_i| \|g_i\cdot\mu - \mu\| \leq \varepsilon\cdot |\rho|_1\cdot \sup_i |g_i|.
\]
\end{proof}

\begin{lemma}\label{lemma_sphere_diffusion}
Let $n\geq 1$ and let  $c_0\in C_n(S^n)$ be a cycle. Then for every $\varepsilon>0$ there exists a chain $c\in C_{n+1}(S^n)$ such that
\begin{enumerate}
\item $\partial_0 c = c_0$;
\item $|c|_1\leq |c_0|_1$;
\item $|(\partial -\partial_0)c'|_1\leq \varepsilon$.
\end{enumerate}
\end{lemma}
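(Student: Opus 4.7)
The key tool is the vanishing $\|S^n\|=0$ of the simplicial volume of the sphere, which follows from the existence of self-maps $d_k:S^n\to S^n$ of arbitrary degree $k$. The consequence I exploit is that every homology class in $H_n(S^n)$ admits representatives with arbitrarily small $\ell^1$-norm; in particular, the class $-[c_0]$ can be represented by a cycle $\eta$ with $|\eta|_1<\varepsilon$.

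The plan is to construct $c$ of the form $c=\sum_i a_i\tilde\sigma_i$, where for each simplex $\sigma_i$ in $c_0=\sum_i a_i\sigma_i$ the simplex $\tilde\sigma_i\in C(\Delta^{n+1},S^n)$ is a singular $(n+1)$-simplex with $\tilde\sigma_i\circ\delta^0=\sigma_i$. This structure automatically yields conditions (1) and (2), since $\partial_0 c=\sum_i a_i(\tilde\sigma_i\circ\delta^0)=c_0$ and $|c|_1=\sum_i|a_i|=|c_0|_1$. The nontrivial task is to choose the $\tilde\sigma_i$ so that the chain $(\partial-\partial_0)c=\sum_i a_i\sum_{k\ge 1}(-1)^k\tilde\sigma_i\circ\delta^k$ has $\ell^1$-norm at most $\varepsilon$. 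Note that whatever the choice, this chain is automatically a cycle in $C_n(S^n)$, and its homology class is $-[c_0]$; condition (3) demands that we realize the cycle representing this class as some particular small $\eta$.

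The ideal case is when each $\tilde\sigma_i$ is taken to be the geodesic cone of $\sigma_i$ from a common point $p\in S^n$: then $\tilde\sigma_i\circ\delta^k=c_p(\partial_{k-1}\sigma_i)$ for $k\ge 1$ and the extra boundary collapses to $-c_p(\partial c_0)=0$, since $c_0$ is a cycle. This single-cone construction requires that every $\sigma_i$ avoid $-p$, which may fail when $\bigcup_i\im(\sigma_i)=S^n$. To handle this, I would first apply a large iterate of the barycentric subdivision $S^N$ to shrink each piece into a small ball (so that a common cone point exists for the subdivided cycle), and account for the subdivision through the standard chain homotopy, together with the prism chain associated to a degree-$k$ self-map $d_k$ of $S^n$: pushing $c_0$ through $d_k$ and dividing by $k$ produces the small homologous representative $\eta$, and the prism $P$ from the homotopy $\mathrm{id}\simeq d_k$ satisfies $\partial P(c_0)=k\eta-c_0$, providing the correction term that realizes the desired error.

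The main obstacle is reconciling the three conditions simultaneously. The tight norm bound $|c|_1\le|c_0|_1$ forces the ``one extension per input simplex'' format and leaves little slack, while passing from $c_0$ to its small representative $\eta$ via the prism and subdivision constructions naturally introduces extra simplices. The heart of the argument is therefore a careful accounting in which the simplices of the cone on the subdivided cycle, of the subdivision homotopy, and of the prism for $d_k$ cancel sufficiently in $\ell^1$ to give a chain of the required form; concretely, this means redistributing mass along the subdivided cone points and using that the leftover extra boundary equals (a homologous version of) $-\eta$, whose norm is $<\varepsilon$. I expect the delicate part to be verifying the cancellations quantitatively rather than up to homology, since only a precise $\ell^1$ control on the specific cycle $(\partial-\partial_0)c$ (not just on its class) suffices for condition (3).
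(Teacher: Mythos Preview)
Your proposal identifies the correct format for $c$ (one $(n+1)$-simplex per simplex of $c_0$, possibly split into a convex combination), but it has a genuine gap at the point you yourself flag as ``delicate.'' The single-cone construction needs a point $-p$ missed by every $\sigma_i$, which need not exist, and none of your suggested fixes are compatible with condition (1). Barycentric subdivision replaces $c_0$ by $S^N c_0$, so $\partial_0 c$ would no longer be $c_0$; the prism for a degree-$k$ map produces a chain whose $\partial_0$ is $(d_k)_*c_0$ rather than $c_0$, and its $\ell^1$-norm is $(n+1)|c_0|_1$, violating (2). Your observation that $(\partial-\partial_0)c$ always represents $-[c_0]$ and that this class has small representatives is correct, but the lemma asks for the \emph{specific} chain $(\partial-\partial_0)c$ to be small, and you give no mechanism for steering it towards a chosen small representative while keeping $\partial_0 c=c_0$ and $|c|_1\le|c_0|_1$.

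The paper's argument supplies exactly the missing mechanism, and it is different in kind from your geometric cone idea. Rather than coning each $\sigma_0^j$ itself (which runs into the $\pi_n$ obstruction), one cones only the $(n-1)$-faces $\partial_i\sigma_0^j$ from a fixed point $x$: this is unobstructed since $\pi_k(S^n)=0$ for $k<n$, and it prescribes all the codimension-$2$ faces of the sought $(n+1)$-simplex. The remaining faces $\sigma_1,\dots,\sigma_{n+1}$ can then be filled in many ways, with the ambiguity parametrized by $\pi_n(S^n)^{n+1}$; the constraint that they bound an $(n+1)$-simplex with $\partial_0=\sigma_0^j$ cuts this down to the abelian (hence amenable) group $G=\{(\xi_i)\in\pi_n(S^n)^{n+1}:\sum\xi_i=0\}$. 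One then takes a nearly-invariant probability measure $\mu$ on $G$ and sets $c=\sum_j a_j\,\mu*F(s^j)$. Since $G$ fixes the $0$-th face, $\partial_0 c=c_0$ exactly; since $\mu$ is a probability measure, $|c|_1\le|c_0|_1$; and because $c_0$ is a cycle, for each $(n-1)$-face $\sigma$ the coefficients hitting $\Xi(\sigma)$ sum to zero, so Lemma~\ref{lemma_amenable_inequality} forces $|(\partial-\partial_0)c|_1$ below $\varepsilon$. The amenability/diffusion step---averaging over the $\pi_n(S^n)$-torsor of fillings---is the idea your outline is missing.
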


\begin{proof}
Let $x\in S^n$ be any point. There exists an operator $c^x_k:C(\Delta^k,S^n)\rightarrow C(\Delta^{k+1},S^n)$ for $k=0,...,n-1$ such that
\begin{itemize}
\item $\partial_0c^x_k\sigma = \sigma$ and the $0$-vertex of $c^x_k$ is $x$;
\item $\partial_{i+1}c^x_k(\sigma) = c^x_k(\partial_i\sigma)$ for $k\geq 1$ and $i=0,...,k$.
\end{itemize}
One can easily construct this operator by induction on $k$. For $k=0$ simplices can be identified with points of $S^n$, so let $y\in S^n$. The second condition is empty, hence one can define $c_0^x(y)$ to be any interval joining $y$ and $x$. For $k>0$ and $\sigma\in C(\Delta^k, S^n)$, the faces $\partial_i c^x_k(\sigma)$ for $i=0,...,k+1$ are defined by induction. It follows from the second property that these faces define a map $\partial\Delta^{k+1}\rightarrow S^n$. Because $\pi_k(S^n)=0$ there is a simplex $\sigma'\in C(\Delta^{k+1},S^n)$ such that $\partial_i\sigma' = \partial_i c^x_k(\sigma)$ for $i=0,...,k+1$. Define $c_k^x(\sigma) = \sigma'$.

For $\sigma\in C(\Delta^{n-1},S^n)$ Let
\[
\Theta(\sigma) := \{\sigma'\in C(\Delta^n, S^n)\::\: \partial_i\sigma' = \partial_i c^x_{n-1}(\sigma) \text{ for }i=0,...,n  \}
\]
and let $\Xi(\sigma)\subset\Theta(\sigma)$ be some set of representatives of homotopy classes of simplices in $\Theta(\sigma)$ relative to their boundaries. Note that $\pi_n(S^n,x)$ acts freely and transitively on $\Xi(\sigma)$ in the similar way as $\pi_n(X)$ acts on $\pi_n(X,Y)$ for a pair of topological spaces $Y\subset X$. For an $n$-simplex $\sigma_0\in C(\Delta^n, S^n)$ let $\Omega(\sigma_0)$ be the set of sequences $(\sigma_0,...,\sigma_{n+1})$ of $n$-simplices such that $\sigma_i\in \Xi(\partial_{i-1}\sigma_0)$ for $i=1,...,n+1$ and define
\[
\omega(\sigma_0) = \{(\sigma_0,...,\sigma_{n+1})\in \Omega(\sigma_0)\::\: \exists_{\sigma\in C(\Delta^{n+1},S^n)} \forall_{i=0,...,n+1}\, \partial_i\sigma = \sigma_i \}.
\]
In other words, $\omega(\sigma_0)$ is a set of sequences of $n+1$ simplices (from which the first one is $\sigma_0$ and the rest is 'standarized') such that they are consequent faces of some $n+1$-simplex (see Figure \ref{figure_sphere_lemma}).  There is an equivalent description of $\omega(\sigma_0)$. Namely, there is a map
\begin{eqnarray*}
h_{\sigma_0}: \Omega(\sigma_0) & \rightarrow & \pi_n(S^n);  \\
h((\sigma_0,....,\sigma_{n+1}))|_{\partial_i\Delta^n} & = & \sigma_i
\end{eqnarray*}
and $\omega(\sigma_0) = (h_{\sigma_0})^{-1}(0)$. Note that for $i=1,...,n+1$ there is an action $\rho_i$ of $\pi_n(S^n)$ on $\Omega(\sigma_0)$ defined as
\[
\rho_i(\xi)\cdot (\sigma_0,....,\sigma_{n+1}) = (\sigma_0,...,\xi\cdot \sigma_i,...,\sigma_{n+1})
\]
and $h_{\sigma_0}$ is equivariant with respect to this action. It follows that for the action $\rho = \bigoplus_{i=1}^{n+1}\rho_i$ of $\bigoplus^{n+1}_{i=1}\pi_n(S^n)$ on $\Omega(\sigma_0)$ we have
\[
h_{\sigma_0}((\xi_1,...,\xi_{n+1})\cdot (\sigma_0,....,\sigma_{n+1})) = h_{\sigma_0}((\sigma_0,\xi_1\cdot\sigma_1....,\xi_{n+1}\cdot\sigma_{n+1})) = (\prod_{i=1}^{n+1}\xi_i)\cdot h_{\sigma_0}(\sigma_0,...,\sigma_{n+1})
\]
hence the group
\[
G = \{(\xi_i)^{n+1}_{i=1}\in \prod^{n+1}_{i=1}\pi_n(S^n,x)\::\: \sum^{n+1}_{i=1} \xi_i = 0 \}
\]
acts on $\omega(\sigma_0)$ and the action is transitive when composed with a projection onto $i$-th factor of $s\in\omega(\sigma_0)$ for $i=1,...,n+1$.

\begin{figure}[h]
\centering
\includegraphics[width=0.5\textwidth]{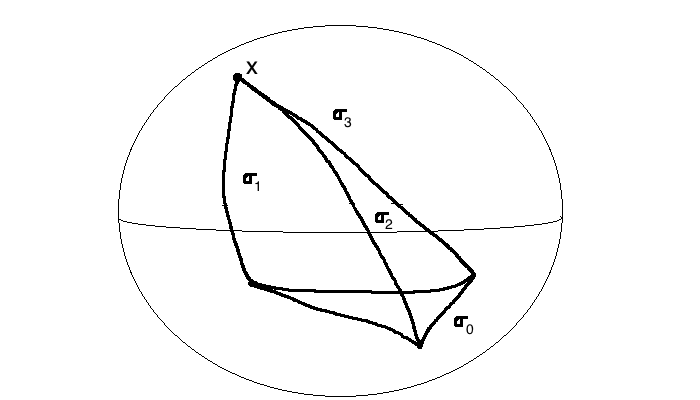}
\caption{Example of an element $(\sigma_0,...,\sigma_3)$ of $\omega(\sigma_0)$. $F((\sigma_0,...,\sigma_3))$ can be defined as any $3$-simplex that 'fills' $(\sigma_0,...,\sigma_3)$.}
\label{figure_sphere_lemma}
\end{figure}

Let $c_0 = \sum^m_{j=1} a_j\sigma_0^j$. For $j=1,...,m$ choose a sequence $s^j = (\sigma_0^j,...,\sigma^j_{n+1})\in\omega(\sigma^j_0)$ and consider the chain
\[
c' = \sum_{j=1}^m a_j F(s^j) \in C_{n+1}(S^n),
\]
where $F:\omega(\sigma_0)\rightarrow C(\Delta^{n+1},S^n)$ is any map assigning to a sequence $s\in\omega(\sigma_0)$ an $n+1$ simplex $\sigma(s)$ such that $\partial_i\sigma(s) = \sigma_i$ for $i=0,...,n+1$. It is clear that $\partial_0 c' = c_0$ and $|c'|_1\leq |c_0|_1$, but the norm of $(\partial-\partial_0)c'$ may be large. However, we can use the amenability of $G$ to modify $c'$ in order to decrease this norm. Let $\mathcal{C}\subset C_{n+1}(S^n)$ be the set of singular chains generated by the simplices of the form $F(s)$ for $s\in\omega(\sigma)$ and $\sigma\in\supp(c_0)$. Note that $\bigoplus^m_{j=1} G$ acts on $\mathcal{C}$ simplex-wise by
\[
(g_1,....,g_m)\cdot F(s) = F(g_j\cdot s),
\]
where $s\in\omega(\sigma_0^j)$. Let $S\subset G$ be a finite symmetric set of generators which is invariant under the action of $S_{n+1}$ on $\prod^{n+1}_{i=1}\pi_n(S^n)$ permuting the indices. Because $G$ is amenable, there exists a finitely supported probability measure $\mu\in \ell^1(G)$ such that for every $g\in S$
\[
\|g\cdot\mu - \mu\|_1\leq \varepsilon',
\]
where $\varepsilon'>0$ is fixed, but we will choose its value later. By averaging along every orbit, we can also assume that $\mu$ is invariant under the action of $\Sigma_{n+1}$ on $G\subset \prod^{n+1}_{i=1}\pi_n(S^n,x)$. We will denote by $\mu'\in\ell^1(\mathbb{Z})$ the push-forward of $\mu$ to any of groups $\pi_n(S^n,x)$ present in the above product. We claim that the chain
\[
c = \sum_{j=1}^m a_j (\mu* F(s^j))
\]
satisfies the desired conditions.

Because taking $\partial_0$ for simplices in $\mathcal{C}$ is invariant under the action of $\bigoplus^m_{j=1} G$ we have $\partial_0c = \partial_0 c' = c_0$. Similarly, $|c|_1\leq |c'|_1\leq |c_0|_1$. Let us estimate the norm of $c'' = (\partial-\partial_0)c$. By construction, $\supp(c'')$ is contained in $\cup_{\sigma\in c_0^{(n-1)}}\Xi(\sigma)$, hence
\[
|c''|_1\leq \sum_{\sigma\in c_0^{(n-1)}}|c''|^{\Xi(\sigma)}_1.
\]
Let $\sigma\in c_0^{(n-1)}$. Then $(\partial-\partial_0)c'|_{\Xi(\sigma)}$ is of the form
\[
\sum_{l=1}^{m(\sigma)} f_l\cdot\tau_l,
\]
where $m(\sigma)$ is a number of simplices in $\supp(c_0)$ with some face in $\Xi(\sigma)$ (with repetitions if any simplex in $\supp(c_0)$ has non-distinct faces), $\tau_l\in\Xi(\sigma)$ for $l=1,...,m(\sigma)$ and $f_l$ are coefficients such that $\sum_{l=1}^{m(\sigma)} f_l = 0$ because $c_0$ is a cycle. Therefore $c''|_{\Xi(\sigma)}$ is of the form
\[
\sum_{l=1}^{m(\sigma)} f_l(\mu*\tau_l),
\]
Let $\xi^{\sigma}_2,...,\xi^{\sigma}_{m(\sigma)}\in\pi_n(S^n,x)\cong \mathbb{Z}$ be elements such that $\xi^{\sigma}_l\cdot \tau_l = \tau_1$ for $l=2,...,m(\sigma)$. By Lemma \ref{lemma_amenable_inequality} we have
\[
|c''|^{\Xi(\sigma)}_1 \leq \varepsilon'\cdot |c_0|_1\sup_{l}\| \xi^{\sigma}_l\|.
\]

Finally, we set
\[
\varepsilon' = \frac{\varepsilon}{|c_0^{(n-1)}| \cdot |c_0|_1\sup_{\sigma,l}\|\xi^{\sigma}_l\|}
\]
and compute
\[
|c''|_1\leq \sum_{\sigma\in c_0^{(n-1)}}|c''|^{\Xi(\sigma)}_1 \leq \sum_{\sigma\in\ c_0^{(n-1)}} \varepsilon'\cdot |c_0|_1\sup_{\sigma,l}\|\xi^{\sigma}_l\| \leq \varepsilon.
\]
\end{proof}
Note that in the above Lemma we will have in fact $|c|_1=|c_0|_1$, because $\partial_0$ is a bounded operator of norm $1$. It follows that every simplex in $\supp(c)$ corresponds to $\partial_0\sigma\in\supp(c_0)$.

Now, we move more directly toward the proof of Lemma \ref{lemma_higher_dim_trick}. For two topological spaces $U, V$ we denote by $U\star V$ their join, i.e the space $U\times V\times I/ \sim$, where $\sim$ is generated by the relations $(u,v,0)\sim (u,v',0)$ and $(u,v,1)\sim (u',v,1)$ for $u,u'\in U$ and $v,v'\in V$. In particular, we have
\[
\Delta^k\star \Delta^l \cong \Delta^{k+l+1}
\]
for every $k,l\in\mathbb{N}$. For two maps $f:U\rightarrow W$ and $g:V\rightarrow W$, where $W$ is a geodesic metric space with unique geodesics, we define a \emph{geodesic join} of $f$ and $g$ to be a map $f\star g:U\star V\rightarrow V$ defined by
\[
f\star g(u,v,t) = [f(u), g(v)](t),
\]
where $[p,q]$ for $p,q\in W$ is a unique geodesic joining $p$ and $q$. If $U = \Delta^k$ and $V=\Delta^l$ are simplices then $f\star g$ can be considered canonically as a $k+l+1$-simplex with the ordered set of vertices being respectively the ordered list of vertices of $f$ followed by the ordered list of vertices of $g$.

Consider a sphere $S^{n-1}$ viewed as an equator of a sphere $S^n$, which is a boundary of a ball $D^{n+1}$. We have then an injective map
\begin{eqnarray*}
\beta: C(\Delta^k,S^{n-1}) & \rightarrow & C(\Delta^{k+2},D^{n+1}) \\
\sigma \mapsto E\star \sigma.
\end{eqnarray*}
Here, $E:I\rightarrow D^{n+1}$ is a $1$-simplex defined to be the interval joining the poles of $D^{n+1}$ and perpendicular to the hyperplane spanned by $S^{n-1}$ (see Figure \ref{figure_beta_operator}). We will denote also by $\beta$ its extension to a map $C_k(S^{n-1})\rightarrow C_{k+2}(D^{n+1})$. Note that we have
\[
\partial\beta = \partial_0\beta - \partial_1\beta + \beta\partial.
\]

\begin{figure}[h]
\centering
\includegraphics[width=0.5\textwidth]{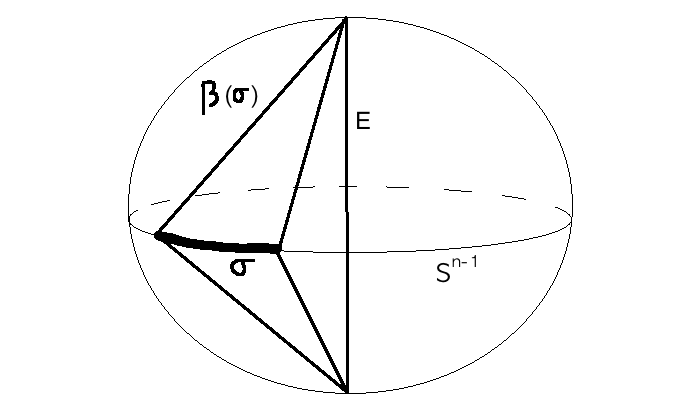}
\caption{A simplex $\sigma\in C(\Delta^k, S^{n-1})$ and its image under $\beta$.}
\label{figure_beta_operator}
\end{figure}

\begin{defi}
A singular chain $c\in C_*(D^{n+1})$ is called \emph{biconical} if it is in the image of $\beta$ up to permuting the vertices of simplices in $\supp(c)$.
\end{defi}

\begin{lemma}\label{lemma_point_on_edge}
Let $c\in C_n^{\lf}(X\cup_f D^n, Y\cup_f D^n)$ be a cycle. Then there exists a point $p\in int(D^n)$ such that $c$ is a finite approximation of a chain $c'$ such that $|c'|^{ne(Y')}_1 \leq |c|^{ne(Y')}_1$ for some set $Y'\subset Y\cup_f D^n$ with finitely many path-connected components, and for every $\sigma\in\supp(c')$ containing $p$, $\sigma^{-1}(p)$ is a discrete set contained in the edges of $\sigma$ which are not contained in $Y\cup_f D^n$. Moreover, we can assume that there exists $N\in\mathbb{N}$, a neighbourhood $U\subset D^n$ of $p$ and a diffeomorphism $\rho: U\rightarrow D^n$ such that for every $\sigma\in\supp(S^{(N)}(c'))$ containing $p$ the simplex $\rho\circ\sigma$ is biconical.
\end{lemma}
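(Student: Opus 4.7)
The plan is to combine smoothing of the chain inside $\text{int}(D^n)$ with a Sard-style genericity argument to pick $p$, and then use a local chain homotopy (via Lemma~\ref{lemma_local_chain_homotopy}) to replace each simplex passing through $p$ by one with the desired biconical form, all inside a small disc around $p$ where the modifications are cheap in the $ne(Y')$-norm.

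First I would apply Lemma~\ref{lemma_local_chain_homotopy} with a system of homotopies that reparametrises each $\sigma\in\supp(c)$ on the preimage of a relatively compact neighbourhood $V\Subset\text{int}(D^n)$ so as to make $\sigma|_{\sigma^{-1}(V)}$ smooth (possible by Whitney approximation, see Remark~\ref{remark_generalization}). Only finitely many simplices meet $V$ by local finiteness of $c$, so the resulting cycle $c_1$ is a finite approximation of $c$ with $|c_1|_1\leq |c|_1$ and agrees with $c$ outside this finite set. Next I would apply Sard's theorem to the finitely many smooth simplices and all their faces, and also exclude the images of all faces of codimension $\geq 1$ and the (closed, measure-zero) set where edges of simplices in $\supp(c_1)$ sit inside $Y\cup_f D^n$. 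Choosing $p\in\text{int}(D^n)$ outside this measure-zero bad set, we obtain: for each $\sigma$ meeting $p$, the preimage $\sigma^{-1}(p)$ is a finite collection of points $q_1,\dots,q_{m(\sigma)}$ in $\text{int}(\Delta^n)$ near each of which $\sigma$ is a local diffeomorphism, no positive-codimension face of $\sigma$ passes through $p$, and $p$ avoids all edges with image in $Y\cup_f D^n$.

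Now let $Y'\subset Y\cup_f D^n$ be the union of the finitely many connected components meeting any $\sigma\in\supp(c_1)$ that passes through a fixed small ball around $p$; note that $Y'$ contains the component containing $D^n$ itself. Choose an open ball $U\subset\text{int}(D^n)$ around $p$, small enough that no simplex missing $p$ meets $U$ and, for each $\sigma$ through $p$, each component of $\sigma^{-1}(U)$ is a small open ball around some $q_j$ mapped diffeomorphically to $U$. Pick a diffeomorphism $\rho\colon U\to D^n$ sending $p$ to the origin. On each such component I would perform a local chain-homotopy (again via Lemma~\ref{lemma_local_chain_homotopy}, using the contractibility of $U$) that first deforms $\sigma$ near $q_j$ so that its preimage of $p$ lies on a designated edge of $\sigma$ which $\rho$ sends to the polar axis $E$ of $D^n$; then, after applying $N$ barycentric subdivisions with $N$ large enough that each sub-simplex near $p$ lies inside $U$, one modifies those sub-simplices to have the biconical form $\rho\circ\tau=\beta(\tau')$ by a further local homotopy (essentially a cone-from-$p$ subdivision in the chart $\rho$).

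The main obstacle is the norm estimate $|c'|^{ne(Y')}_1\leq|c|^{ne(Y')}_1$, since barycentric subdivision generically blows up the $\ell^1$-norm by a factor of $(n+1)!$. The resolution uses the choice of $Y'$: every new simplex produced by the modifications and subdivisions near $p$ has at least one edge contained in $D^n$, and $D^n\subset Y'$; hence all such simplices lie in $e(Y')$ and do not contribute to the $ne(Y')$-norm. The simplices untouched by the modification keep their contribution unchanged, while any simplex originally in $ne(Y')$ that did get modified is replaced by pieces all of which lie in $e(Y')$, producing only a non-positive net change. The remaining technicality is the consistent inductive construction of the homotopies implementing the biconical replacement so that $c'$ is a genuine finite approximation (i.e.\ chain-homotopic to $c_1$), but this follows from standard acyclic-model arguments applied locally inside $U$, much as in the proof of Proposition~\ref{prop_local_barycentric_subdivision}.
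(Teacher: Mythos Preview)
Your overall strategy (smooth, apply Sard, push $p$ onto edges via a local homotopy, then regularize for biconicity) matches the paper's, but you are missing one essential preparatory step.

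The gap: you never ensure that every simplex $\sigma\in\supp(c_1)$ containing $p$ actually possesses an edge \emph{not} contained in $Y\cup_f D^n$. Nothing in your Sard argument rules out a simplex all of whose edges lie in $Y\cup_f D^n$ (for instance one whose $1$-skeleton lands entirely in $D^n$, or in some other component of $Y$, while a higher-dimensional face still passes through $p$). For such a $\sigma$ there is no ``designated edge'' onto which $p$ can be pushed while satisfying the lemma's requirement, and your deformation step breaks down. The paper handles exactly this case by an intermediate step: it first perturbs $c$ slightly (arranging barycenters to lie outside $Y'$) and then applies the \emph{local barycentric subdivision} $S_{Y'}$ of Proposition~\ref{prop_local_barycentric_subdivision}. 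That operator subdivides only ``along $Y'$'', replaces each offending simplex by pieces each of which now has an edge outside $Y'$ (hence outside $Y\cup_f D^n$, since for simplices meeting $p$ the two coincide by the choice of $Y'$), and comes with the built-in estimate $|S_{Y'}(c)|_1^{ne(Y')}\le|c|_1^{ne(Y')}$. Only after this preparation does the paper carry out the edge-pushing and biconical regularization you describe.

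Your norm discussion is also off target. The $N$-fold subdivision $S^{(N)}(c')$ in the statement is not part of $c'$; it is only a device for naming the small sub-simplices near $p$, so its $(n{+}1)!^N$ blow-up is irrelevant to $|c'|_1^{ne(Y')}$. Conversely, your proposed fix (``every new simplex has an edge in $D^n\subset Y'$'') would, if taken literally, produce simplices with \emph{all} edges in $Y\cup_f D^n$, contradicting the very edge condition the lemma demands. What actually keeps the $ne(Y')$-norm under control in the paper is that the edge-pushing and biconical steps are one-simplex-to-one-simplex homotopies via Lemma~\ref{lemma_local_chain_homotopy}, which can be arranged to preserve the $e(Y')/ne(Y')$ status of each simplex; the only place where the norm could genuinely inflate is in manufacturing the missing edge, and that is precisely what $S_{Y'}$ is engineered to do at no $ne(Y')$-cost.
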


\begin{proof}
By smoothing $c$ locally (using Whitney approximation theorem \cite[Theorem 6.19]{LeeS} and Lemma \ref{lemma_local_chain_homotopy}) we can assume it is smooth when restricted to some open subset $V\subset D^n$. Using Sard's theorem \cite[Theorem 6.8]{LeeS} choose $p\in V$ to be any regular value of all $\sigma\in\supp(c)$ which is not in $c^{(n-1)}$. Then for every $\sigma\in\supp(c)$ the set $\sigma^{-1}(p)$ is a finite set of points in the interior of $\Delta^n$ and $\sigma$ is a local diffeomorphism in some neighbourhood of $\sigma^{-1}(p)$. Let $Y'\subset Y\cup_f D^n$ be the (finite) sum of connected components of $Y\cup_f D^n$ with non-empty intersection with any $\sigma\in\supp(c)$ containing $p$. We will construct the chain $c'$ in three steps. The first one is to modify $c$ to a chain $c''$ such that every simplex $\sigma\in\supp(c'')$ containing $p$ has some edge not contained in $Y\cup_f D^n$. The second step is to modify $c''$ to $c'''$ such that $p$ lies only on the edges of $c'''$ not contained in $Y'$, and in a 'regular' way. Finally, the third one is to disturb $c'''$ to $c'$ in order to satisfy 'local biconical' condition.

We start with the construction of a chain $c''$. We apply Lemma \ref{lemma_local_chain_homotopy} to modify $c$ such that for every $\sigma\in\supp(c)$
\begin{itemize}
\item if $\sigma\cap Y'\neq \emptyset$ and $\sigma$ has some edge not contained in $Y'$ then $\sigma$ is $Y'$-barycentrically non-degenerated;
\item if $\sigma$ contains $p$ then every $\sigma'\in\supp(S_{Y'}(\sigma))$ containing $p$ has some edge not contained in $Y'$.
\end{itemize}
For simplices $\sigma\in\supp(c)$ with all edges in $Y'$ the second condition can be realised simply by modifying $\sigma$ such that its barycenter does not lie in $Y'$. Note also that for these simplices $S_{Y'}(\sigma)\subset e(Y')$. We set $c'' = S_{Y'}(c)$. Then $c''$ is a finite approximation of $c$, $|c''|^{ne(Y')}_1\leq |c|^{ne(Y')}_1$ by Proposition \ref{prop_local_barycentric_subdivision} and every $\sigma\in\supp(c'')$ containing $p$ has some edge not contained in $Y\cup_f D^n$ by the definition of $c''$.

To construct the chain $c'''$ we use again Lemma \ref{lemma_local_chain_homotopy}, but the description is more complicated. To be more clear, we define a system of homotopies $H^{\sigma}_k:\Delta^k\times I$ for $\sigma\in C(\Delta^k, X\cup_f D^n)$ and $k=0,...,n$. Choose some simplex $\tau\in\supp(c'')$ containing $p$, $y\in\tau^{-1}(p)$ and an edge $e$ of $\tau$ not contained in $Y\cup_f D^n$. Moreover, let $\gamma:[-1,1]\rightarrow D^n$ be some smooth path such that $\gamma(0)=p$ and $\frac{d\gamma}{dt}(0) = 1$. Then there is a homotopy $H^{\tau}_n:\Delta^n\times I\rightarrow X\cup_f D^n$ such that
\begin{itemize}
\item $H^{\tau}_n(\cdot, 0)=\tau$;
\item $H^{\tau}_n$ is constant on the vertices, some neighbourhood of $\tau^{-1}(p)\setminus\{y\}$ and on the faces of $\tau$ not containing $e$;
\item $H^{\tau}_n(\cdot, 1)|_{e}$ is a smooth path such that for every $t\in[0,1]$ such that $e(t)=p$ we have
\[
e|_{(t-\varepsilon, t+\varepsilon)} = \gamma|_{(-\varepsilon, \varepsilon)}
\]
for some $\varepsilon>0$;
\item $|H^{\tau}_n(\cdot, t)^{-1}(p)| = |\tau^{-1}(p)|$ for $t\in [0, 1]$;
\item $|int(\Delta^n)\cap H^{\tau}_n(\cdot, 1)^{-1}(p)| = |\tau^{-1}(p)|$ for $t\in [0,1)$ and $|int(\Delta^n)\cap H^{\tau}_n(\cdot, 1)^{-1}(p)| = |\tau^{-1}(p)|-1$.
\end{itemize}

\begin{figure}[h]
\centering
\includegraphics[width=0.5\textwidth]{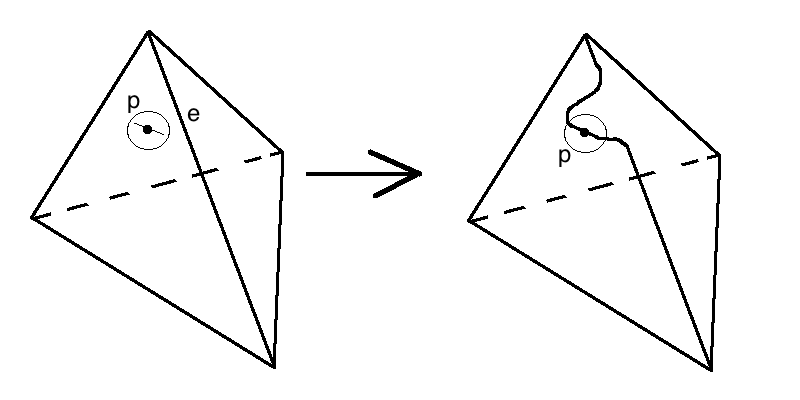}
\caption{An example of a homotopy $H^{\tau}_n$ for a simplex containing $p$ (in the simplest case when no edges of $\tau$ are identified).}
\label{figure_homotopy_example}
\end{figure}

In other words, $H^{\tau}_n$ is a homotopy modifying $\tau$ in order to 'move' one of the points in $\tau^{-1}(p)$ to some edge of $\tau$ not contained in $Y\cup_f D^n$, but fixing the others (see Figure \ref{figure_homotopy_example}). Having defined $H^{\tau}_n$ (and, implicitly, $H^{\sigma}_k$ for every $k$-face $\sigma\subset\tau$) we define $H^{\sigma}_k$ for $\sigma\in C(\Delta^k, X\cup_f D^n)$ and $k=0,...,n$ by induction on $k$. For $k=0$ we set $H^{\sigma}_k$ to be constant homotopies, as well as for $k=1$ except the edge $e$. Having $H^{\sigma}_l$ defined for $l < k$, we define $H^{\sigma}_k$ simply as homotopy extensions of corresponding homotopies of the boundaries, choosing however homotopies with the following properties.
\begin{itemize}
\item $H_k^{\sigma}$ is constant if $H_{k-1}^{\partial_i\sigma}$ is constant for $i=0,...,k$;
\item $|int(\Delta^k)\cap H_k^{\sigma}(\cdot,t)^{-1}(p)|$ is constant with respect to $t$ except for $t=1$, when it may decrease by $1$ if $\sigma$ has the edge $e$.
\item if $\sigma\neq\tau$ then $H^k_{\sigma}$ is constant in some neighbourhood of $\sigma^{-1}(p)$.
\end{itemize}
We apply Lemma \ref{lemma_local_chain_homotopy} using the above system of homotopies and repeat this process for all points $y\in \tau^{-1}(p)$ and all $\tau\in\supp(c'')$ containing $p$. Note that there are only finitely many such simplices and points, and each chain homotopy modify only finitely many simplices, hence the resulting chain $c'''$ is a finite approximation of $c$. It has also the property that for each $\sigma\in\supp(c''')$, $\sigma^{-1}(p)$ is contained in the 1-skeleton of $\sigma$, and each edge of $\sigma$ passing through $p$ is smooth and passes through $p$ with velocity 1.

The last step is to 'regularize' the behaviour of $c'''$ around $p$. Let $t\in (0,1)$ and let $e$ be an edge in $(c''')^{(1)}$ such that $e(t)=p$. Choose sufficiently small $\varepsilon$ and a diffeomorphism $\rho_{e,t}: U\rightarrow D^n$ defined on some neighbourhood $U$ of $p$ such that $\rho(e|_{[t-\varepsilon, t+\varepsilon]})$ is the interval joining two poles of $D^n$. Let also $N\in\mathbb{N}$ be a number such that the simplices in $S^{(N)}(c''')$ containing $e(t)$ are contained in $U$. Note that because there are only finitely many such pairs $(e,t)$, we can choose $N$ uniformly for all of them. We can also assume (by reparametrising $e$ slightly if necessary) that $e(t)$ is not a vertex of $S^{(N)}(c''')$.

Let $c'''_{e,t}$ be a subchain of $c'''$ consisting of simplices in $\supp(S^{(N)}(c'''))$ containing $e(t)$. Without loss of generality we can assume that every $\sigma\in\supp(c'''_{e,t})$ has distinct vertices. Consider $D^n$ with a CW-structure consisting of one $n$-cell glued to $S^{n-1}$, two $n-1$-cells glued to the equator $S^{n-2}\subset S^{n-1}$ and lower dimensional cells which correspond to $S^{n-2}$. Using cellular approximation we can homotopy $c'''_{e,t}$, keeping edges containing $e(t)$ fixed, such that for every $\sigma\in\supp(c'''_{e,t})$ every codimension-2 face $\sigma'\subset \sigma$ spanned by the vertices not contained in the edge containing $e(t)$, the simplex $(\rho_{e,t})_*(\sigma')$ is contained in the equator $S^{n-2}$ of the sphere $S^{n-1}\subset D^n$. Moreover, because $D^n$ is convex, we can correct this homotopy (reparametrising $e$ slightly again if necessary) such that $\rho(c'''_{e,t})$ is biconical simply by joining the corresponding points by shortest geodesics. We repeat this procedure to all pairs $(e,t)$ such that $e(t)=p$ and obtain a cycle $c'$, satisfying the required conditions.
\end{proof}

To finish the proof of Lemma \ref{lemma_higher_dim_trick} we need only the following, easy lemma.

\begin{lemma}\label{lemma_barycentric_edge}
Let $N\in\mathbb{N}$, let $e$ be some edge of $\Delta^n$ and let $e'\in (S^{(N)}\Delta^n)^{(1)}$ be some subedge of $e$. Then the simplicial subcomplex $S^{(N)}_{e'}$ of $S^{(N)}\Delta^n$ consisting of the simplices containing $e'$ is of the form $e'\star K$, where $K$ is isomorphic (as a simplicial complex) to $S^{(N)}\Delta^{n-2}$.
\end{lemma}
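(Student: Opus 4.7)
The plan is to reduce the description of the star to a link computation and then induct on $N$, using the standard formula for links inside a barycentric subdivision. Since $\mathrm{star}_K(\sigma) = \sigma \star \mathrm{link}_K(\sigma)$ in any simplicial complex $K$, it suffices to exhibit an isomorphism
\[
\mathrm{link}_{S^{(N)}\Delta^n}(e') \;\cong\; S^{(N)}\Delta^{n-2},
\]
where $\Delta^{n-2}$ denotes the face of $\Delta^n$ opposite to $e$.

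I induct on $N$. The base case $N=0$ is immediate, since then necessarily $e'=e$ and $\mathrm{link}_{\Delta^n}(e)$ is visibly the opposite face $\Delta^{n-2}$. For the inductive step, write $S^{(N)}\Delta^n = S(S^{(N-1)}\Delta^n)$, whose vertices are barycenters of simplices of $S^{(N-1)}\Delta^n$. Using the uniqueness of barycentric coordinates on $\Delta^n$ (a point lies on $e$ precisely when its coordinates $x_i$ vanish for all $i\geq 2$), the barycenter of a simplex $\tau$ of $S^{(N-1)}\Delta^n$ lies on $e$ if and only if every vertex of $\tau$ does, i.e.\ $\tau \subset e$. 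Consequently an edge of $S^{(N)}\Delta^n$ lying on $e$ must have the form $\{\tau_0 \subsetneq \tau_1\}$ with $\tau_0,\tau_1$ simplices of $S^{(N-1)}\Delta^n$ contained in $e$. Since $e$ is one-dimensional and $\tau_0 \subsetneq \tau_1$, $\tau_0$ is forced to be a vertex of $S^{(N-1)}\Delta^n$ lying on $e$ and $\tau_1$ an edge of $S^{(N-1)}\Delta^n$ lying on $e$, with $\tau_0$ an endpoint of $\tau_1$.

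The final step is to apply the general formula for the link of a $1$-chain inside a barycentric subdivision: for any simplicial complex $L$ and any chain $\{\sigma_0\subsetneq\sigma_1\}$ in its face poset,
\[
\mathrm{link}_{SL}(\{\sigma_0\subsetneq\sigma_1\}) \;\cong\; S(\partial\sigma_0)\,\star\,\Delta((\sigma_0,\sigma_1))\,\star\,S(\mathrm{link}_L(\sigma_1)),
\]
where $\Delta((\sigma_0,\sigma_1))$ denotes the order complex of the open interval in the face poset. In our setting the first factor $S(\partial\tau_0)$ is empty because $\tau_0$ is a vertex, and the middle factor is empty because no face strictly lies between a vertex and an edge containing it. The third factor becomes $S(\mathrm{link}_{S^{(N-1)}\Delta^n}(\tau_1))$; applying the inductive hypothesis to the edge $\tau_1\subset e$ yields $S(S^{(N-1)}\Delta^{n-2}) = S^{(N)}\Delta^{n-2}$. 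With the convention $\emptyset \star X = X$ the two degenerate joins disappear, giving the desired identification and closing the induction.

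The main obstacle I foresee is the careful bookkeeping behind the link formula in a barycentric subdivision — specifically the order-preserving bijection $\tau \mapsto \tau \setminus \sigma_1$ identifying faces of $L$ strictly containing $\sigma_1$ with simplices of $\mathrm{link}_L(\sigma_1)$, which is what turns the coface contribution into $S(\mathrm{link}_L(\sigma_1))$ — together with making sure the empty-join convention is applied uniformly so that the degenerate factors vanish rather than contribute spurious vertices.
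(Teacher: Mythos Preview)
Your argument is correct and closely parallels the paper's, though you package the combinatorics differently. The paper proves the case $N=1$ by an explicit computation with flags $(\Delta_0\subsetneq\cdots\subsetneq\Delta_n)$ in the face poset of $\Delta^n$: a subedge $e'\subset e$ forces $\Delta_0$ to be a vertex of $e$ and $\Delta_1=e$, whence $\Delta_2,\dots,\Delta_n$ are faces containing $e$, i.e.\ joins $e\star\Delta'_k$ with $\Delta'_k$ a face of the opposite $\Delta^{n-2}$; this yields the isomorphism with $e'\star S\Delta^{n-2}$ directly, and the paper then remarks that larger $N$ follows by a simple induction it does not spell out. You instead start the induction at $N=0$ and carry the inductive step by invoking the general link formula $\mathrm{link}_{SL}(\sigma_0\subsetneq\sigma_1)\cong S(\partial\sigma_0)\star\Delta((\sigma_0,\sigma_1))\star S(\mathrm{link}_L(\sigma_1))$, which for a vertex--edge pair collapses to $S(\mathrm{link}_L(\sigma_1))$. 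This is exactly the ``simple induction'' the paper alludes to, made precise via a standard poset lemma rather than an ad hoc flag count. Your approach buys modularity and makes the inductive step transparent; the paper's buys self-containment, since the $N=1$ computation requires no outside input. The one point worth stating explicitly in your write-up is the identification of ``simplices containing $e'$'' with the closed star, so that the decomposition $\overline{\mathrm{st}}(e')=e'\star\mathrm{link}(e')$ applies; this is implicit in your first sentence but deserves a word.
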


\begin{proof}
It suffices to prove the result for $N=1$, because for larger $N$ it follows from this case by simple induction. Note that the simplices in $S\Delta^n$ are indexed by the ascending sequences of faces of $\Delta^n$ of length $n+1$, i.e. sequences $(\Delta_0,...,\Delta_n)$, where each $\Delta_k$ is some $k$-dimensional face of $\Delta^n$ for $k=0,...,n$ such that $\Delta_{k-1}\subset\Delta_k$ for $k=1,...,n$. The vertices of $(\Delta_0,...,\Delta_n)$ are the consequent barycenters of $\Delta_0,...,\Delta_n$. It follows that simplices in $S\Delta^n$ containing the edge $e'$ which is a subedge of some original edge of $\Delta^n$, are of the form $e'\star \Delta'$, where $\Delta'$ are indexed by ascending sequences $(\Delta_2,...,\Delta_n)$, where $\Delta_k$ is a $k$-dimensional face containing $e$ for $k=2,...,n$. In other words, they are indexed by ascending sequences $(e'\star \Delta'_0,...,e'\star \Delta'_{n-2})$, where $\Delta'_k$ for $k=0,...,n-2$ is a $k$-face of an $n-2$-dimensional face of $\Delta^n$ disjoint from $e$. A combinatorial isomorphism between $S_{e'}$ and $e'\star S\Delta^{n-2}$ is given then by
\[
(e'\star\Delta'_0,...,e'\star\Delta'_{n-2}) \mapsto e'\star(\Delta'_0,...,\Delta'_n),
\]
ant the inverse by 
\[
e'\star(\Delta'_0,...,\Delta'_n) \mapsto (e'\star\Delta'_0,...,e'\star\Delta'_{n-2}).
\]
\end{proof}

\begin{proof}[Proof of Lemma \ref{lemma_higher_dim_trick}]
Let $c\in C_k^{\lf}(X\cup_f D^n , Y\cup_f D^n)$. If $k<n$ then $c$ can be finitely approximated by a chain in $C^{\lf}_k(X,Y)$ by approximating it (locally) by a cellular chain, so the result is obvious. From now on we assume that $k=n$.

Let $p\in D^n$, $U\subset D^n$, $N\in\mathbb{N}$, $\rho:U\rightarrow D^n$, $Y'\subset Y$ and $c'\in C_n^{\lf}(X\cup_f D^n,Y\cup_f D^n)$ be given by Lemma \ref{lemma_point_on_edge} applied to $c$ and let $e$ be an edge of $S^{(N)}(c')$ containing $p$. Note that in fact there is at most one such edge, and for every $\sigma\in\supp(c')$ we have $\sigma^{-1}(e)\subset(\Delta^n)^{(1)}$. For a simplex $\sigma\in\supp(c')$ let 
\[
\mathcal{S}^{\sigma}_e := \{\Delta'\in\supp(S^{(N)}\Delta^n)\::\: \sigma_*(\Delta') \text{ has edge }e \}.
\]
Note that by Lemma \ref{lemma_barycentric_edge}, $\mathcal{S}^{\sigma}_e$ is a disjoint sum of subcomplexes $e'_1\star S^{(N)}\Delta''_1,..., e'_{l(\sigma)}\star S^{(N)}\Delta''_{l(\sigma)}$, where $\sigma_*(e'_i) = e$ for $i=1,...,l(\sigma)$. Let
\[
\mathcal{T}_e^{\sigma} := \sum_{i=1}^{l(\sigma)} \sigma_*(e'_i\star\Delta''_i).
\] 
Note that by the definition of $\mathcal{T}_e^{\sigma}$ and Lemma \ref{lemma_point_on_edge}, the chain $\rho_*(\mathcal{T}^{\sigma}_e)$ is biconical, i.e.
\[
\rho_*(\mathcal{T}^{\sigma}_e) = \beta(c^{\sigma}_e)
\]
for some $c^{\sigma}_e\in C_*(S^{n-2})$ (see Figure \ref{figure_operator_beta_definitions}). Consider the chain
\[
c_e := \sum_{\sigma\in\supp(c')} c'(\sigma)\cdot c^{\sigma}_e.
\]
It is a cycle. Indeed, if $\partial c_e\neq 0$, then $\partial c'$ would contain terms corresponding to $\rho^{-1}_*\beta(\partial c_e)$ (which are not contained in $C^{\lf}_*(Y)$, because no edge in $c'^{(1)}$ containing $e$ as some of its subdivided pieces is contained in $Y$), therefore would be also non-zero.

\begin{figure}[h]
\centering
\includegraphics[width=0.5\textwidth]{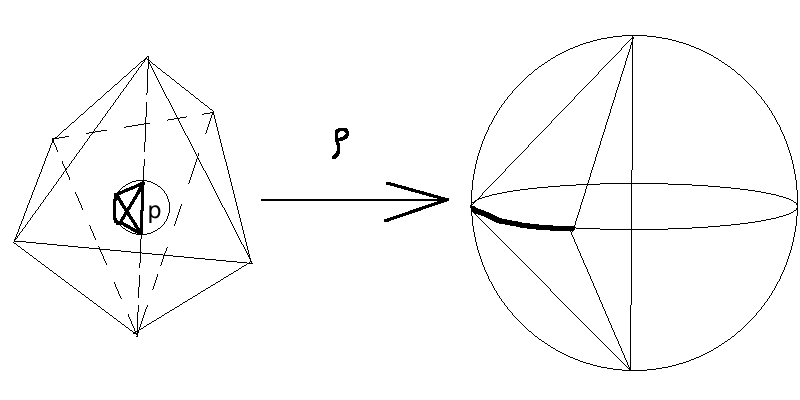}
\caption{On the left, examples of simplices in $\supp(c')$; an example of a simplex in $\supp(\mathcal{T}^{\sigma}_e)$ is made bold. On the right, example of a simplex in $\supp(\rho_*\mathcal{T}^{\sigma}_e)$; a simplex in $\supp(c^{\sigma}_e)$ is made bold.}
\label{figure_operator_beta_definitions}
\end{figure}

We view $D^{n-1}\subset D^n$ as a subset consisting of a disc bounded by the equator $S^{n-1}\subset D^n$. Consider an operator $\eta: C_*(D^{n-1})\rightarrow C_{*+1}(D^n)$ defined as
\[
\eta(\sigma)(t_0,..., t_{k+1}) = \begin{cases}
((0,...,-1)\star\sigma)(t_0 -t_1, 2t_1,t_2,..., t_{k+1}) \text{ for }t_0\geq t_1;\\
((0,...,1)\star\sigma)(t_1-t_0, 2t_0, t_2,...,t_{k+1}) \text{ for } t_0\leq t_1;
\end{cases}
\]
where $(0,...,-1)$ and $(0,...,1)$ are the south and the north pole of $D^n$ respectively. In other words, $\eta(\sigma)$ is formed from two cones over $\sigma$ with vertices $(0,...,-1)$ and $(0,...,1)$ glued along their common bases (see Figure \ref{figure_eta_operator}). Moreover, $\eta(\sigma)$ is arranged such that the south pole is $0$-th vertex and the north pole is $1$-st vertex. Note that if $\sigma$ does not contain the center of $D^n$, neither does $\eta(\sigma)$. Note also that
\[
\partial_0\eta(\sigma) = (0,....,1)\star\partial_0\sigma = \partial_0\beta(\partial_0\sigma),
\]
\[
\partial_1\eta(\sigma) = (0,....,-1)\star\partial_0\sigma = \partial_1\beta(\partial_0\sigma)
\]
and $\partial_i\eta(\sigma) = \eta(\partial_{i-1}\sigma)$ for $i=2,...,k+1$. In particular, we have
\[
\partial\eta = \partial_0\beta\partial_0 - \partial_1\beta\partial_0 -\eta(\partial-\partial_0) = (\partial\beta -\beta\partial)\partial_0 - \eta(\partial-\partial_0).
\]
\begin{figure}[h]
\centering
\includegraphics[width=0.5\textwidth]{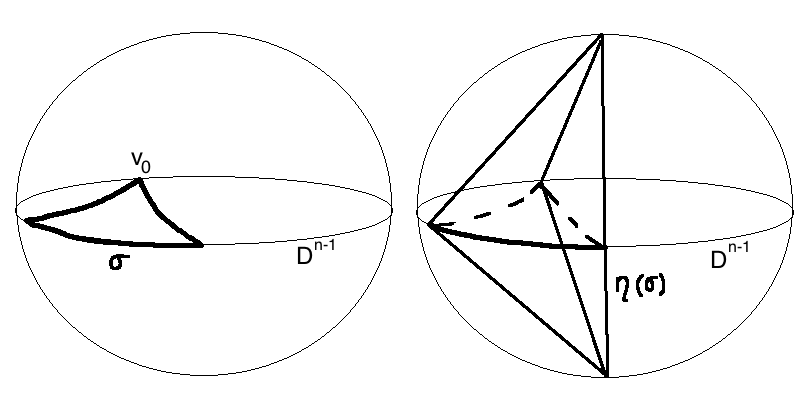}
\caption{Example of an application of $\eta$ to a simplex $\sigma$.}
\label{figure_eta_operator}
\end{figure}

Let $c'_e$ be a chain constructed by Lemma \ref{lemma_sphere_diffusion} applied to $c_e$ and some $\varepsilon'>0$, which we will choose later. For $\sigma\in\supp(c_e)$ we also define
\[
c'^{\sigma}_e := \sum_{\sigma'\in\supp(c'_e)\::\: \partial_0\sigma' = \sigma}c'_e(\sigma')\cdot\sigma',
\]
i.e. $c'^{\sigma}_e$ is a part of $c'_e$ that corresponds to a given $\sigma\in\supp(c_e)$. Finally, consider the chain
\[
c'' = \sum_{\sigma\in\supp(c')}c'(\sigma)\cdot \zeta_\sigma,
\]
where $\zeta_\sigma\in C_*(X)$ is constructed by cutting out $\mathcal{T}^{\sigma}_e$ from $\sigma$ and for every $\sigma'\in\supp(\mathcal{T}_e^{\sigma})$ gluing back in its place a chain
\[
\frac{\mathcal{T}_e^{\sigma}(\sigma')}{\sum_{\sigma''\in\supp(c'^{\rho_*\sigma'}_e)}c'_e(\sigma'')}\rho^{-1}_*\eta(c'^{\rho_*\sigma'}_e),
\]
i.e. the chain $\rho^{-1}_*\eta(c'^{\rho_*\sigma'}_e)$ normalized such that the sum of its coefficients is $\mathcal{T}_e^{\sigma}(\sigma')$ (see Figure \ref{figure_zeta_operator}).
\begin{figure}[h]
\centering
\includegraphics[width=1\textwidth]{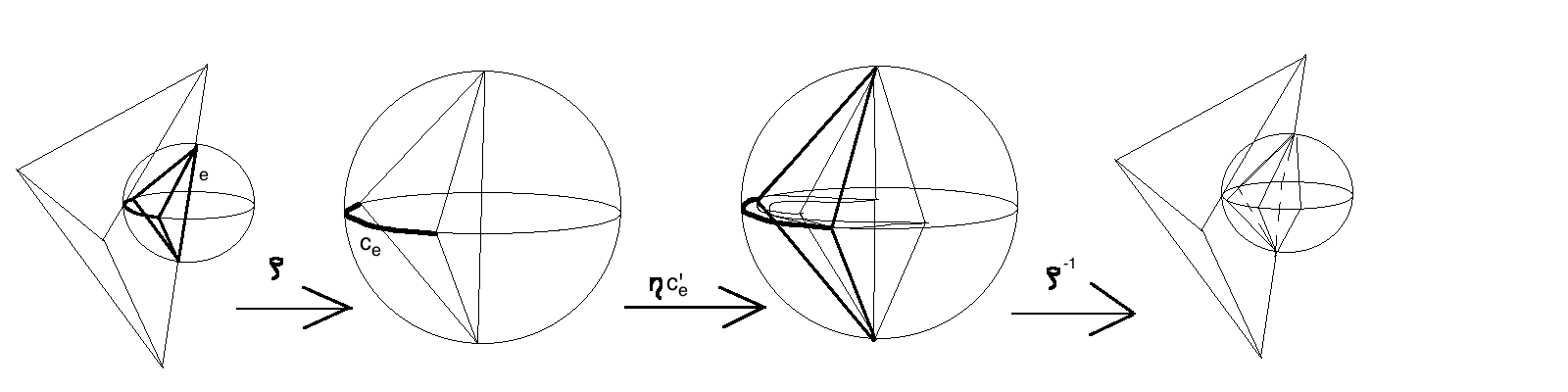}
\caption{Example of an application of $\zeta$ to a simplex $\sigma$.}
\label{figure_zeta_operator}
\end{figure}
We check that this 'definition' is correct. Note that a simplex $\sigma'\in \supp(\mathcal{T}^{\sigma}_e)$ glues to the rest of $\sigma$ by its $0$-th and $1$-st boundary components and these are the same as for any simplex $\sigma''\in\supp(\rho^{-1}_*\eta(c'^{\rho_*\sigma'}_e))$. To see this, let $\rho_*\sigma' = \beta(\tau')$ and let $\rho_*\sigma'' = \eta(\tau'')$. Then it follows that $\partial_0\tau'' = \tau'$. We compute
\[
\partial_0\sigma'' = \rho_*\partial_0(\eta\tau'') = \rho_*\partial_0\beta\partial_0\tau'' = \rho_*\partial_0\beta\tau' = \partial_0\sigma'.
\]
The same applies for $\partial_1$. Therefore every simplex $\sigma''\in\supp(\rho^{-1}_*\eta(c'^{\rho_*\sigma'}_e))$ can be glued in the place of $\sigma'\in \supp(\mathcal{T}^{\sigma}_e)$. Moreover, because the coefficients are normalized, for every $\sigma'\in \supp(\mathcal{T}^{\sigma}_e)$ we glue in its place a chain of the same weight. It follows that $c''\in C^{\lf}_n(X\cup_f(D^n\setminus \{p\}), Y\cup_f(D^n\setminus \{p\}))$ is a proper chain and is a finite modification of $c'$ (because there are finitely many simplices which contain $p$ and only these are modified). It has the following properties.
\begin{itemize}
\item $|c''|^{ne(Y')}_1\leq |c'|^{ne(Y')}_1$;
\item $c'''$ does not contain $p$;
\item $S^{(N)}(c'') = S^{(N)}(c')$ in  $C^{\lf}_*(X\cup_f D^n,Y\cup_f D^n)$;
\item $\partial S^{(N)}(c'')\in C^{\lf}_*(Y)$;
\item $|(\partial c'')|_{n(Y)}|_1\leq |\eta((\partial-\partial_0)c'_e)|_1\leq \varepsilon'$.
\end{itemize}
The first and second properties are obvious from the construction, similarly as the third and fourth, because all modified parts of simplices $\sigma\in\supp(c')$ are sums of simplices of $S^{(N)}(c')$ that are contained in $C_*(Y\cup_f D^n)$. The last one follows from the fact that if we look globally at the construction of $c''$, we cut out $\sum_{\sigma\in \supp(c')}c'(\sigma)\cdot\mathcal{T}^{\sigma}_e$ from $c'$ and glue back $\rho_*^{-1}\eta(c'_e)$. Moreover, we have
\begin{eqnarray*}
\partial\eta(c'_e) & = & (\partial\beta -\beta\partial)(\partial_0c'_e) - \eta((\partial-\partial_0)c'_e) \\
& = & (\partial\beta -\beta\partial)(c_e) - \eta((\partial-\partial_0)c'_e) \\
& = & \partial\beta(c_e) - \eta((\partial-\partial_0)c'_e) \\
& = & \partial\rho_*(\sum_{\sigma\in\supp(c')}c'(\sigma)\cdot\mathcal{T}^{\sigma}_e) - \eta((\partial-\partial_0)c'_e).
\end{eqnarray*}
Because $c'$ is a cycle in $C_*^{\lf}(X\cup_f D^n, Y\cup_f D^n)$, the only terms in $\partial(c'')|_{n(Y)}$ are these corresponding to $\eta((\partial-\partial_0)c'_e)$.

It follows that $c''$ is almost a chain we are looking for, the only problem is that $c''$ is not a cycle in $C_*^{\lf}(X,Y)$, because it has some boundary outside of $Y$. However, because this boundary can be made arbitrary small, we can use techniques similar to these used in the proof of Proposition \ref{prop_chain_decomposition} to correct $c''$ to be a proper cycle.

Choose some $\varepsilon>0$ and let $T$ be a chain homotopy between $S^{(N)}$ and $Id$. Because $\|T\|$ depends only on $N$, which depends only on $c'$, we can set $\varepsilon'=\frac{\varepsilon}{\|T\|}$. Then $|(\partial c'')|_{n(Y)}|_1\leq \frac{\varepsilon}{\|T\|}$. Consider the chain
\[
\bar{c} = c'' + T((\partial c'')|_{n(Y)}).
\]
The term $(\partial c'')|_{n(Y)}$ has finite support, hence $\bar{c}\in C_*^{\lf}(X)$ is a finite modification of $c$. Moreover, it has the following properties (because adding a term in $C^{\lf}_*(Y)$ does not change a homology class in $C^{\lf}_*(X,Y)$, we will check some of them for $\hat{c} = c'' + T(\partial c'')$).
\begin{itemize}
\item It is a cycle. We have
\[
\partial\hat{c} = \partial c'' +\partial T\partial c'' = \partial c'' + S^{(N)}\partial c'' -\partial c'' -T\partial\partial c'' = \partial S^{(N)}(c'')\in C_*(Y).
\]
\item It is homologuous to $c'$. We have
\begin{eqnarray*}
[\bar{c}] = [\hat{c}] & = & [S^{(N)}\hat{c}] = [S^{(N)}c''' + S^{(N)}T\partial c'''] \\ 
& = & [S^{(N)}c''' + S^{(2N)} c''' - S^{(N)}c''' - \partial S^{(N)}T c'''] = [S^{(2N)}c'''] = [S^{(2N)}(c')] = [c'].
\end{eqnarray*}

\item $|\bar{c}|^{ne(Y')}_1 \leq |c''|^{ne(Y')}_1 + \|T\|\cdot |(\partial c'')|_{n(Y)}|^{ne(Y')}_1 \leq |c'|^{ne(Y')}_1 + \varepsilon$.
\end{itemize}
It follows that $\bar{c}$ satisfies the required conditions.
\end{proof}

\subsection*{Acknowledgements}
I am grateful to Piotr Nowak for bringing the simplicial volume to my attention. I would also like to thank Koji Fujivara, Clara L\"{o}h and Roman Sauer for discussions about the preliminary version of this paper.

The author was supported by NCN grant UMO-2014/13/N/ST1/02421.

\bibliography{references}

\end{document}